\documentclass[10pt]{amsart}

\usepackage[utf8]{inputenc}

\usepackage{url}

\usepackage{arydshln}

\usepackage{enumitem}

\usepackage{rotating}

\usepackage[small, width=0.9\textwidth]{caption}


\usepackage{tikz-cd}


\usepackage{amssymb}
\usepackage{amsthm}

\usepackage{hyperref}

\usepackage{changes}

\usepackage{mathrsfs}

\numberwithin{equation}{section}

\theoremstyle{plain}
\newtheorem{thm}[equation]{Theorem}
\newtheorem{lem}[equation]{Lemma}
\newtheorem{cor}[equation]{Corollary}
\newtheorem{prop}[equation]{Proposition}

\newtheorem{rem}[equation]{Remark}

\newtheorem*{thm*}{Theorem}

\theoremstyle{definition}
\newtheorem{defn}[equation]{Definition}

\newtheorem{ex}[equation]{Example}

\newtheorem*{ex*}{Example}

\let\tilde = \widetilde


\newcommand{\bbC}{{\mathbb C}}

\newcommand{\bbN}{{\mathbb N}}
\newcommand{\bbP}{{\mathbb P}}

\newcommand{\bbZ}{{\mathbb Z}}

\newcommand{\scrA}{{\mathscr A}}

\newcommand{\scrD}{{\mathscr D}}
\newcommand{\scrE}{{\mathscr E}}

\newcommand{\scrG}{{\mathscr G}}

\newcommand{\scrL}{{\mathscr L}}

\newcommand{\scrN}{{\mathscr N}}
\newcommand{\scrO}{{\mathscr O}}

\newcommand{\scrV}{{\mathscr V}}

\newcommand{\scrX}{{\mathscr X}}

\newcommand{\scrZ}{{\mathscr Z}}

\newcommand{\codim}{\mathrm{codim}}

\newcommand{\Supp}{\mathrm{Supp}}

\newcommand{\Sing}{\mathrm{Sing}}
\newcommand{\Sm}{\mathrm{Sm}}
\newcommand{\CH}{\mathrm{CH}}

\renewcommand{\sp}{{\mathrm{sp}}}

\newcommand{\Sym}{\mathrm{Sym}}

\newcommand{\Perv}{\mathrm{Perv}}
\newcommand{\IC}{{\mathrm{IC}}}


\newcommand{\Pic}{{\mathit{Pic}}}

\newcommand{\Spec}{{\mathrm{Spec}}}
\newcommand{\Proj}{{\mathrm{Proj}}}

\renewcommand{\P}{\mathbb{P}}

\newcommand{\scrHom}{{\mathscr{H}\kern -.9pt om}}
\newcommand{\scrExt}{{\mathscr{E}\kern -.9pt xt}}

\DeclareMathOperator{\CC}{CC}

\DeclareMathOperator{\Gr}{Gr}
\DeclareMathOperator{\Fl}{Fl}

%
\makeatletter
\def\@seccntformat#1{%
  \protect\textup{\protect\@secnumfont
    \ifnum\pdfstrcmp{subsection}{#1}=0 \bfseries\fi
    \csname the#1\endcsname
    \protect\@secnumpunct
  }%
}  
\makeatother

\makeatletter
\@namedef{subjclassname@2020}{%
  \textup{2020} Mathematics Subject Classification}
\makeatother

\begin{document}

\vspace*{-0.5em}

\title[Semicontinuity of Gauss maps]{Semicontinuity of Gauss maps\\ and the Schottky problem}

\author{Giulio Codogni}
\address{Dipartimento di Matematica, Università degli Studi Tor Vergata \newline \hspace*{1em} Via della ricerca scientifica 1,	00133 Roma (Italy)}
\email{codogni@mat.uniroma2.it}

\author{Thomas Kr\"amer}
\address{Institut f\"ur Mathematik, Humboldt-Universit\"at zu Berlin \newline \hspace*{1em} Unter den Linden 6, 10099 Berlin (Germany)}
\email{thomas.kraemer@math.hu-berlin.de}

\keywords{Gauss map, abelian variety, theta divisor, Schottky problem.}
\subjclass[2020]{Primary 14K12; Secondary 14C17, 14F10, 14H42}

\begin{abstract}
We show that the degree of Gauss maps on abelian varieties is semicontinuous in families, and we study its jump loci. As an application we obtain that in the case of theta divisors this degree answers the Schottky problem. Our proof computes the degree of Gauss maps by specialization of Lagrangian cycles on the cotangent bundle. We also get similar results for the intersection cohomology of varieties with a finite morphism to an abelian variety; it follows that many components of Andreotti-Mayer loci, including the Schottky locus, are part of the stratification of the moduli space of ppav's defined by the topological type of the theta divisor.
\end{abstract}

\maketitle
\setcounter{tocdepth}{1}
\vspace*{-0.8em}
\tableofcontents
\vspace*{-0.8em}

\thispagestyle{empty}

\section{Introduction}

The Gauss map of a hypersurface in projective space is the rational map that sends any smooth point of the hypersurface to its normal direction in the dual projective space. The analogous notion of Gauss maps for subvarieties of abelian varieties appears already in Andreotti’s proof of the Torelli theorem~\cite{Andreotti}. In contrast to the case of projective hypersurfaces, the Gauss map for any ample divisor on an abelian variety is generically finite of degree $>1$, and its degree is related to the singularities of the divisor. We show that this degree is lower semicontinuous in families, and we study its jump loci. As an application we get that in the moduli space of principally polarized abelian varieties, the degree of the Gauss map refines the Andreotti-Mayer stratification and answers the Schottky problem as conjectured in~\cite{CGS}. We work over an algebraically closed field $k$ with $\mathrm{char}(k)=0$. In section~\ref{sec:IC} we obtain similar results for the intersection cohomology of complex varieties with a finite morphism to an abelian variety. In particular, many Andreotti-Mayer loci such as the Schottky locus are determined over the complex numbers already by the topological type of the theta divisor.  


\subsection{Gauss maps and their jump loci}

Let $A$ be an abelian variety over $k$. By translations we may identify its tangent spaces at all points, hence the cotangent bundle $T^\vee A = A\times V$ is trivial with fiber $V=H^0(A, \Omega^1_A)$. The {\em Gauss map} of a reduced effective divisor $D\subset A$ is the rational map
\[
 \gamma_D: \quad D \;\dashrightarrow\; \P V
\] 
that sends a smooth point of the divisor to its conormal direction at that point; it coincides with the rational map given by the linear series $\P V^\vee = |\scrO_D(D)|$.~For an irreducible divisor this is a generically finite dominant map iff the divisor is ample, which happens iff the divisor is not stable under translations by any positive dimensional abelian subvariety~\cite[cor.~II.11, lem.~II.9]{RanSubvarieties}. Even in the generically finite case the Gauss map can have positive dimensional fibers~\cite{AC}.

\medskip 

For algebraic families of generically finite maps the generic degree always defines a constructible stratification of the parameter space, but in general it can jump in both directions (see example~\ref{ex:not-semicontinuous}). Our first semicontinuity result says that for Gauss maps on abelian varieties this does not happen:

\begin{thm} \label{thm:semicontinuous}
Let $A \to S$ be an abelian scheme over a variety $S$, and let $D\subset A$ be a relatively ample divisor which is flat over $S$. Let $D_s\subset A_s$ denote their fibers over $s\in S$, and let $\gamma_{D_s}$ be the corresponding Gauss map. Then for each $d\in \bbN$ the subsets
\[
 S_d \;=\; \bigl\{ s\in S \mid \deg(\gamma_{D_s}) \leq d \bigr\} \;\subseteq \; S
\] 
are closed in the Zariski topology.
\end{thm}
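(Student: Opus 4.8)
The plan is to express the degree of a Gauss map as an intersection number on the projectivized cotangent bundle — where it is computed by an effective conic Lagrangian cycle — and then to exploit that such intersection numbers are constant in flat families while the flat limit of the relevant cycle can only acquire extra \emph{effective} components. Since the generic degree of an algebraic family of generically finite maps is a constructible function on the base (as recalled above), each $S_d$ is constructible, so it suffices to prove that $S_d$ is stable under specialization. By the valuative criterion this reduces us to the case $S=\Spec R$ with $R$ a discrete valuation ring with closed point $0$ and generic point $\eta$, and to the inequality $\deg(\gamma_{D_0})\le\deg(\gamma_{D_\eta})$, where $D_0$ now denotes the reduced divisor underlying the special fibre $D|_{A_0}$. (Here $\deg(\gamma_D)$ is read as the number of points of $\P\Lambda_D$ over a general point of $\P V$, which for reducible $D$ is the sum of the generic degrees of the Gauss maps of its components.)

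\textbf{The degree as an intersection number.} Put $n=\dim A$. For a reduced effective divisor $D$ on an abelian variety $A$, let $\Lambda_D\subseteq T^\vee A=A\times V$ be its conormal variety, the closure of the conormal bundle of the smooth locus of $D$; it is an effective conic Lagrangian cycle of dimension $n$, and we write $\P\Lambda_D\subseteq A\times\P V$ for its projectivization, of dimension $n-1$. The second projection $\P\Lambda_D\to\P V$ extends the Gauss map $\gamma_D$, so with $\xi$ the pullback to $A\times\P V$ of the hyperplane class of $\P V$ the class $\xi$ restricts on $\P\Lambda_D$ to $c_1(\gamma_D^*\scrO_{\P V}(1))$, and hence
\[
 \deg(\gamma_D)\;=\;\int_{\P\Lambda_D}\xi^{\,n-1}.
\]

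\textbf{The family argument.} Over $S=\Spec R$ the relative cotangent bundle is $\Omega^1_{A/S}=\pi^*\scrV$ with $\scrV=\pi_*\Omega^1_{A/S}$ the Hodge bundle, and its projectivization is $A\times_S\P(\scrV)$. Let $\P\Lambda_{D/S}$ be the closure inside $A\times_S\P(\scrV)$ of the projectivized relative conormal bundle of the smooth locus of $D\to S$; being reduced with every irreducible component dominating $S$, it is flat and proper over $S$, its generic fibre is $\P\Lambda_{D_\eta}$, and its special fibre, as a cycle, is the flat limit of $\P\Lambda_{D_\eta}$. Writing $\xi$ now for the pullback of $\scrO_{\P(\scrV)}(1)$, conservation of number in this proper flat family gives $\int_{\P\Lambda_{D_\eta}}\xi^{n-1}=\int_{[\P\Lambda_{D/S}]_0}\xi^{n-1}$. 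A local computation over the generic point of a reduced component $E\subseteq D_0$ — where $D_\eta$ is generically smooth and, as one specializes to $0$, its conormal direction limits onto that of $E$ — shows that each $\P\Lambda_E$ occurs in the flat limit with multiplicity at least one, so that
\[
 [\P\Lambda_{D/S}]_0 \;=\; \sum_E m_E\,[\P\Lambda_E] \;+\; Z, \qquad m_E\ge 1,\ Z\ \text{effective}.
\]
Since $\xi$ is the pullback of an ample class, hence nef, the intersection of the effective cycle $Z$ with $\xi^{n-1}$ is a nonnegative number, whence
\[
 \deg(\gamma_{D_\eta}) \;=\; \sum_E m_E\deg(\gamma_E) + \int_Z\xi^{n-1} \;\ge\; \sum_E\deg(\gamma_E) \;=\; \deg(\gamma_{D_0}),
\]
which is the required inequality, and $S_d$ is closed.

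\textbf{Main obstacle.} The standard ingredients here are constructibility of the generic degree, the valuative criterion, conservation of number, and nefness; the real work — and where the abelian scheme structure and the Lagrangian/characteristic-cycle formalism genuinely enter (ampleness being used to keep the Gauss maps generically finite) — is to construct the relative conormal variety so that it is honestly flat over the base, to identify its special fibre with the flat limit of the conormal varieties of the fibres and to see that this limit is again an effective conic Lagrangian cycle (so that its projectivization is legitimate), and above all to carry out the local analysis guaranteeing that the "old" pieces $\P\Lambda_E$ survive with positive multiplicity. The case of a non-reduced special fibre, which the argument handles by passing to $D_0=(D|_{A_0})_{\mathrm{red}}$, is part of what must be checked in that local analysis.
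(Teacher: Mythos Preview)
Your proof is correct and follows the same broad outline as the paper (constructibility, reduction to a one-parameter base, flatness of the relative conormal, and the fact that its special fibre contains the conormal of the central fibre plus an effective excess), but you take a more elementary route at the key positivity step. The paper computes the degree of a conic Lagrangian cycle as its intersection number with the zero section in $T^\vee A$; it then invokes the Lagrangian specialization principle (lemma~\ref{lem:specialization}) to write the excess as a sum $\sum_Z m_Z\Lambda_Z$ of conormal varieties, and the Franecki--Kapranov identity $\deg(\Lambda_Z)=\deg(\gamma_Z)$ of lemma~\ref{lem:degreGauss} to see that each summand contributes nonnegatively. You instead work on the projectivization, write the Gauss degree directly as $\int_{\P\Lambda}\xi^{\,n-1}$, and observe that the excess effective cycle $Z$ pairs nonnegatively with $\xi^{\,n-1}$ simply because $\xi$ is the pullback of an ample class and hence nef --- with no need to know that the limit is Lagrangian or to identify its components as conormals. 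This is genuinely more direct for the bare semicontinuity statement. The paper's route, however, yields more structure: knowing that the excess decomposes as $\sum m_Z\Lambda_Z$ over strata $Z\subseteq\Sing(D_0)$, with $\deg(\Lambda_Z)>0$ exactly when $Z$ is of general type, is precisely what drives the jumping criterion of theorem~\ref{thm:jump} and the applications to the Andreotti--Mayer and Schottky loci, whereas your nefness argument gives the inequality without this finer stratified description of the defect.
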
 

The above result does not show where the degree actually jumps. Let us say that an irreducible subvariety of an abelian variety is {\em negligible} if it is stable under translations by a positive dimensional abelian subvariety. Simple abelian varieties have no negligible subvarieties other than themselves. More generally, by~\cite[th.~3]{Abramovich} 
an irreducible closed subvariety of an abelian variety is negligible iff it is not of general type. Our second result says that in the setting of theorem~\ref{thm:semicontinuous} the degree of the Gauss map jumps whenever a new component of general type appears in the singular locus $\Sing(D_s)$. To make this precise we specify a curve along which we move inside the parameter space:

\begin{thm} \label{thm:jump}
Let $S'\subset S$ be a curve, and fix a point $0\in S'(k)$. If $\Sing(D_0)$ has an irreducible component which is of general type and not contained in the Zariski closure of~$\bigcup_{s\in S'\setminus \{0\}} \Sing(D_s)$, then 
\[ 
 \deg(\gamma_{D_0}) < \deg(\gamma_{D_s})
\]
for all $s\in S'(k) \setminus \{0\}$ in some Zariski open neighborhood of the point $0$.
\end{thm}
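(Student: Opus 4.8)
The plan is to express $\deg(\gamma_{D_s})$ as an intersection number of conormal (Lagrangian) cycles on the cotangent bundle, and then to control this number under specialization along $S'$; the main input is the behaviour of Lagrangian cycles under specialization, which is the technical core of the paper and which I take for granted here. \emph{Step 1.} For a reduced ample divisor $D\subset A$ let $\Lambda_D\subset T^\vee A=A\times V$ be the conormal variety, the closure of the conormal bundle of the smooth locus of $D$; it is an irreducible Lagrangian subvariety of dimension $g=\dim A$, conic in the $V$-direction, and $\gamma_D$ is the projectivization of the projection $\Lambda_D\to V$. Hence $\deg(\gamma_D)=[\Lambda_D]\cdot[A\times\{v\}]$ for a generic $v\in V$. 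More generally, for an irreducible closed $Z\subset A$ with conormal variety $\Lambda_Z$ put $\delta(Z):=[\Lambda_Z]\cdot[A\times\{v\}]\ge 0$. I shall use the dichotomy $\delta(Z)=0$ if $Z$ is negligible and $\delta(Z)>0$ otherwise: if $Z$ is stable under translation by a positive-dimensional abelian subvariety $B$ then every conormal covector of $Z$ annihilates $\Lie B$, so $\Lambda_Z\subset A\times W$ with $W=(\Lie B)^\perp\subsetneq V$ and a generic $v$ lies outside $W$; conversely, if the conormal projection is not dominant then the tangent spaces of $Z$ all contain a fixed nonzero translation-invariant subspace, which integrates to a positive-dimensional abelian subvariety stabilizing $Z$. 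By the cited theorem of Abramovich, $\delta(Z)>0$ is thus equivalent to $Z$ being of general type.

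\emph{Step 2.} Restricting the abelian scheme and $D$ to $S'$, I form the relative conormal variety $\Lambda$ inside the relative cotangent bundle $T^\vee(A/S')$, i.e.\ the closure of the conormal bundle of the locus where $D\to S'$ is smooth. Its components are irreducible and dominate $S'$, so the fibre $\Lambda_0\subset T^\vee A_0$ is a pure $g$-dimensional Lagrangian cycle, and over a dense open $U'\subset S'$ its fibres are the conormal cycles $[\Lambda_{D_s}]$. As $D\to S'$ is smooth over the smooth locus of $D_0$, the cycle $\Lambda_0$ contains $[\Lambda_{D_0}]$ with multiplicity one and every other component is the conormal variety $\Lambda_{Z_i}$ of an irreducible subvariety $Z_i\subset\Sing(D_0)$; write $\Lambda_0=[\Lambda_{D_0}]+\sum_i m_i[\Lambda_{Z_i}]$ with $m_i>0$. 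Intersecting with $A_0\times\{v\}$ for generic $v$ — a proper intersection, since the negligible $Z_i$ contribute nothing — and invoking conservation of number for the flat family $\Lambda\to S'$ near $0$, I get, after shrinking $S'$ around $0$, that
\[
 \deg(\gamma_{D_s})\;=\;\deg(\gamma_{D_0})+\sum_i m_i\,\delta(Z_i)\qquad\text{for all }s\in S'(k)\setminus\{0\}.
\]
(Steps 1--2 can equivalently be run with the characteristic cycles of the nearby and vanishing cycle complexes of the shifted constant sheaf of $D$ along $S'$.) This re-proves semicontinuity along $S'$ and reduces the claimed inequality to showing that the given component $Z$ occurs among the $Z_i$: then $\delta(Z)>0$ by Step~1 (as $Z$ is of general type) and $\sum_i m_i\delta(Z_i)\ge m_Z\,\delta(Z)>0$.

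\emph{Step 3.} By hypothesis $Z\not\subset\overline{\bigcup_{s\neq0}\Sing(D_s)}$, so a general point $z\in Z$ has a Zariski open neighbourhood $U\subset A$ with $U\cap\Sing(D_s)=\varnothing$ for all $s\neq0$ near $0$ and $U\cap\Sing(D_0)=U\cap Z$. Cutting $U$ with a sufficiently generic affine-linear slice $N$ through $z$ of complementary dimension and transverse to $Z$, the family $s\mapsto D_s\cap N$ becomes a one-parameter smoothing of the isolated hypersurface singularity $(D_0\cap N,z)$: transversality keeps this singularity isolated in $D_0\cap N$ and keeps $D_s\cap N$ smooth near $z$ for $s\neq0$. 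A non-smooth isolated hypersurface singularity has positive Milnor number, i.e.\ nonzero vanishing cohomology, so the specialization of the conormal cycles of $D_s\cap N$ contains $T^\vee_z N$ with strictly positive multiplicity. Because the formation of the relative conormal variety and of its central fibre is compatible with a generic transverse slicing, this forces $\Lambda_Z$ to appear in $\Lambda_0$ with $m_Z>0$, which completes the argument.

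The delicate point is Step~3: one has to show precisely that a generic transverse slice is compatible with the relative conormal construction and with passage to the central fibre — so that the multiplicity of $\Lambda_Z$ in $\Lambda_0$ is really computed on the sliced family — and that the slice can be chosen transverse to $D_s$ for all small $s\neq0$ simultaneously as well as to the stratum $Z$ of $D_0$, so that the sliced family genuinely is a smoothing. Granting this, the positivity of the contribution is the classical fact that a non-smooth isolated hypersurface point has positive Milnor number; the rest is bookkeeping of intersection numbers on $T^\vee(A/S')$ together with the specialization theorem for Lagrangian cycles used in Step~2.
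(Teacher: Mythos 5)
Your Steps 1--2 reproduce the paper's own architecture: the identification $\deg(\gamma_D)=\deg(\Lambda_D)$ (Franecki--Kapranov, lemma~\ref{lem:degreGauss}), the dichotomy ``$\deg(\Lambda_Z)>0$ iff $Z$ is of general type'' (prop.~\ref{prop:degree-positive}, via Abramovich), Lagrangian specialization along $S'$ (lemma~\ref{lem:specialization}), and conservation of the total degree (prop.~\ref{prop:degree-constant}). This correctly reduces the theorem to showing that the distinguished component $Z$ appears with positive multiplicity $m_Z$ in $\sp_0(\Lambda_{D/S'})-\Lambda_{D_0}$, which is exactly prop.~\ref{prop:component-in-sp}. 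Where you diverge is in how you establish $m_Z>0$, and that is where the gap sits.

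Your Step 3 replaces the paper's global algebraic argument by a local normal-slice/Milnor-number argument, and the two facts you yourself flag as delicate are genuinely unproven and are precisely the content of the missing step. First, the commutation of Lagrangian specialization with a generic transverse slice --- i.e.\ that the multiplicity of $\Lambda_Z$ in $\sp_0(\Lambda_{D/S'})$ equals the multiplicity of $T^\vee_zN$ in $\sp_0(\Lambda_{(D\cap N)/S'})$ --- is a nontrivial non-characteristic restriction statement: one must check that the slice $N$ (which cannot literally be affine-linear inside an abelian variety, so must be taken analytically, \'etale-locally, or as a general complete intersection through $z$) avoids, near $z$, the relative polar locus $\{x\in D_s: \gamma_{D_s}(x)\in\P(T_xN)^{\perp}\}$ for \emph{all} $s\neq 0$ simultaneously, and is non-characteristic for every component of the limit cycle. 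This is true for a general point $z\in Z$ and a general slice (it is the L\^e--Teissier theory of polar multiplicities), but it is not free, and your appeal to the positivity of the Milnor number is transcendental while the paper works over an arbitrary algebraically closed field of characteristic zero. The paper sidesteps all of this: it resolves the indeterminacy of the \emph{relative} Gauss map by blowing up $\Sing(D/S)$, uses the codimension-one hypothesis to show that the Jacobian minors generate the ideal of $\Sing(D/S)$ and hence that the resulting map $\hat X\to D\times\P V$ is a closed embedding (lemma~\ref{lem:closed-embedding}), and concludes that the $n$-dimensional exceptional divisor over $Z$ forces $\Lambda_Z$ to be a component of the specialization (lemmas~\ref{lem:upper-bound}, \ref{cor:conormal-in-sp}). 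So your strategy can be completed, but only by importing the slicing compatibility from the literature or proving it; as written, the decisive inequality $m_Z>0$ is asserted rather than established.
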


The above in particular applies if all components of the singular locus are of general type and~$\dim(\Sing(D_0))>\dim(\Sing(D_{s}))$ for all~$s\neq 0$. This last condition is motivated by the case of theta divisors and the Schottky problem.


\subsection{Application to the Schottky problem} Let $\scrA_g$ be the moduli space of principally polarized abelian varieties of dimension $g$. Inside it, consider for $d\in \bbN$ the Gauss loci
\[
 \scrG_d \;=\; \{ (A, \Theta) \in \scrA_g \mid \deg(\gamma_\Theta) \leq d\} 
\] 
as in~\cite[sect.~4]{CGS} The above results show that these loci are closed (cor.~\ref{cor:closed}) and refine the Andreotti-Mayer stratification (cor.~\ref{cor:AM}). 
Thus the Gauss loci provide a solution for the Schottky problem to characterize the closure of the locus of Jacobians in the moduli space of principally polarized abelian varieties:

\begin{cor} \label{cor:Schottky_intro}
Inside $\scrA_g$ we have: \medskip 

\noindent (a) The locus of Jacobians is a component of $\scrG_{d}$ for $d=\binom{2g-2}{g-1}$.\smallskip

\noindent (b) The locus of hyperelliptic Jacobians is a is a component of $\scrG_{d}$ for $d=2^{g-1}$.\smallskip
\end{cor}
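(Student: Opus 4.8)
The plan is to deduce the corollary from the closedness of the Gauss loci (cor.~\ref{cor:closed}) and the jump criterion of theorem~\ref{thm:jump}, feeding in two classical facts. First, the Gauss map of a theta divisor has degree exactly $\binom{2g-2}{g-1}$ for the Jacobian of any smooth non-hyperelliptic curve and exactly $2^{g-1}$ for a hyperelliptic Jacobian: both are obtained by counting the effective divisors of degree $g-1$ with $h^0=1$ cut out by a general hyperplane of the canonical space (which for a hyperelliptic curve meets the $2{:}1$ canonical image in $g-1$ conjugate pairs), and they are recorded in~\cite{CGS}. Second, inside $\scrA_g$ the closure $\scrJ_g$ of the Jacobian locus is an irreducible component of the Andreotti--Mayer locus $\mathcal N_{g-4}=\{(A,\Theta):\dim\Sing(\Theta)\geq g-4\}$, and the closure $\scrH_g$ of the hyperelliptic locus is an irreducible component of $\mathcal N_{g-3}$. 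I carry out~(a); part~(b) is the same argument with $(\scrJ_g,\,g-4,\,\binom{2g-2}{g-1})$ replaced throughout by $(\scrH_g,\,g-3,\,2^{g-1})$. Throughout I pass silently to a finite level cover of $\scrA_g$ carrying a universal abelian scheme together with its flat, relatively ample theta divisor, so that theorems~\ref{thm:semicontinuous} and~\ref{thm:jump} apply, and I exclude the degenerate range $g\leq 3$, to be treated separately.

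By the first fact $\scrJ_g\subseteq\scrG_d$ for $d=\binom{2g-2}{g-1}$ (hyperelliptic Jacobians satisfy $\deg\gamma_\Theta=2^{g-1}\leq d$), and $\scrG_d$ is closed, so it suffices to prove that $\scrJ_g$ is maximal among irreducible closed subsets of $\scrG_d$; equivalently, that $Z=\scrJ_g$ for an irreducible component $Z$ of $\scrG_d$ with $\scrJ_g\subseteq Z$. Assume not, so $\dim Z>\dim\scrJ_g$. Choose a very general point $0\in\scrJ_g$: then $C_0$ is non-hyperelliptic, $A_0=JC_0$ is simple, $\Sing(\Theta_0)=W^1_{g-1}(C_0)$ by the Riemann singularity theorem and hence has a component of dimension $g-4$, $\deg\gamma_{\Theta_0}=d$, and $0$ is a general point of the component $\scrJ_g$ of $\mathcal N_{g-4}$, so $\mathcal N_{g-4}$ coincides with $\scrJ_g$ over some open $U\ni 0$. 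Since $\scrJ_g$ has positive codimension in $Z$, a general curve on $Z$ through $0$ is not contained in $\scrJ_g$; replacing it by a branch of its normalization I obtain a smooth curve $S'$ with a marked point still called $0$ and a nonconstant map $S'\to Z$ with $0\mapsto 0$, and after shrinking $S'$ its image lies in $U$ and meets $\scrJ_g$ only at $0$. Then $\dim\Sing(\Theta_s)\leq g-5$ for every $s\in S'\setminus\{0\}$, since such $s$ lies off $\mathcal N_{g-4}$.

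I now verify the hypothesis of theorem~\ref{thm:jump} for the pulled-back family $\Theta|_{S'}$ at $0$. As $A_0$ is simple and $\Sing(\Theta_0)\subsetneq A_0$, no irreducible subvariety of $\Sing(\Theta_0)$ is stable under translation by a positive-dimensional abelian subvariety, so by~\cite[th.~3]{Abramovich} every component of $\Sing(\Theta_0)$ is of general type. On the other hand the Zariski closure $Y=\overline{\bigcup_{s\in S'\setminus\{0\}}\Sing(\Theta_s)}$ in the total space has dimension at most $(g-5)+1=g-4$, and none of its components lies in the fibre $A_0$ (each component of $\bigcup_{s\neq 0}\Sing(\Theta_s)$ either dominates $S'$ or is contained in some fibre $A_s$ with $s\neq 0$); since $A_0$ is a Cartier divisor in the total space this forces $\dim(Y\cap A_0)\leq g-5$. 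Hence the $(g-4)$-dimensional component of $\Sing(\Theta_0)$ is of general type and is not contained in $Y$, and theorem~\ref{thm:jump} yields $\deg\gamma_{\Theta_0}<\deg\gamma_{\Theta_s}$ for all $s\neq 0$ near $0$. But $\deg\gamma_{\Theta_0}=d$ while $\deg\gamma_{\Theta_s}\leq d$ because $s$ maps into $Z\subseteq\scrG_d$, a contradiction. Therefore $Z=\scrJ_g$, proving~(a); the parallel argument gives~(b). For $g\leq 3$ the statement is degenerate --- $\scrJ_g$ or $\scrH_g$ fills $\scrA_g$, or $\Sing(\Theta_0)$ is empty --- and is checked by hand.

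The main obstacle is precisely the verification just carried out: one needs not merely that $\dim\Sing(\Theta)$ drops off the Jacobian (resp.\ hyperelliptic) locus, but that a top-dimensional component of $\Sing(\Theta_0)$ genuinely disappears in the limit rather than merely deforming within the nearby singular loci. This is where the classical input that $\scrJ_g$ (resp.\ $\scrH_g$) is an irreducible \emph{component} of the relevant Andreotti--Mayer locus --- which lets one take $0$ so general that $\mathcal N_{g-4}$ locally reduces to $\scrJ_g$ --- and the simplicity of the very general (hyperelliptic) Jacobian --- which secures the general-type condition in theorem~\ref{thm:jump} --- are used in an essential way.
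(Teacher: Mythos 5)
Your proof is correct and follows essentially the same route as the paper: both reduce the claim to the closedness of the Gauss loci plus the jump criterion along a curve meeting the Jacobian (resp.\ hyperelliptic) locus transversely, using that this locus is an irreducible component of an Andreotti--Mayer locus, that a very general (hyperelliptic) Jacobian is simple so all components of $\Sing(\Theta_0)$ are of general type, and the classical values $\binom{2g-2}{g-1}$ and $2^{g-1}$ of the Gauss degree. The only difference is organizational --- the paper factors the argument through a general statement about Andreotti--Mayer components with no negligible singular components (corollary~\ref{cor:AM}) and argues directly, whereas you argue by contradiction --- but the substance is identical.
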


The above corollary is shown in section~\ref{sec:schottky} together with an analogous statement for Prym varieties. It confirms a conjecture by the first author, Grushevsky and Sernesi~\cite[conjecture~1.6]{CGS} who verified it for $g\leq 4$ by an explicit description of the Gauss loci. As pointed out in loc.~cit., this is only a weak solution to the Schottky problem: In general the Gauss loci in the above corollary have more than one irreducible component and the Jacobian locus is only one of them. The theory of $\scrD$-modules allows to refine the degree of the Gauss map to representation theoretic invariants that might distinguish the Jacobian locus~\cite{KraemerMicrolocalII}.


\subsection{The degree of conormal varieties}

For the proof of theorem~\ref{thm:semicontinuous}~and~\ref{thm:jump} we interpret the degree of the Gauss map as an intersection number of Lagrangian cycles on the cotangent bundle of the abelian variety and apply specialization for such cycles~\cite{GinzburgCharacteristic, SabbahConormaux}, which we can do because $\mathrm{char}(k)=0$. To explain how this works, let us forget about abelian varieties for a moment and fix any ambient smooth variety $W$ over $k$. The {\em conormal variety} to a subvariety $X\subset W$ is defined as the closure \medskip 
\[
 \Lambda_X \;=\; \overline{ \strut \{ (x,\xi) \mid x\in \Sm(X), \;\xi \in T_x^\vee(W), \; \xi \perp T_x (X) \}} \;\subset\; T^\vee(W) \medskip
\]
of the conormal bundle to the smooth locus $\Sm(X)$, where the closure is taken in the total space of the cotangent bundle of the ambient smooth variety. This conormal variety always has pure dimension $n=\dim(W)$, in fact it is Lagrangian with respect to the natural symplectic structure on the cotangent bundle. It is also conic, i.e.~stable under the natural action of the multiplicative group on the fibers of the cotangent bundle. Conversely, any closed conic Lagrangian subvariety of the cotangent bundle arises like this~\cite[lemma~3]{Kennedy}. So the map $X\mapsto \Lambda_X$ induces an isomorphism \medskip 
\[
 \scrZ(W) = \{ \textnormal{cycles on $W$}\} 
 \;\stackrel{\sim}{\longrightarrow}\; \scrL(W) = \{ \textnormal{conic Lagrangian cycles on $T^\vee W$}\},  \medskip 
\]
where by a {\em conic Lagrangian cycle} we mean a $\bbZ$-linear combination of closed conic Lagrangian subvarieties. In the case of projective varieties we can talk about the degree of conormal varieties:

\begin{defn} \label{defn:degree}
If $W$ is projective, the {\em degree homomorphism} on conic Lagrangian cycles is the map
\[
 \deg: \quad \scrL(W) \;\longrightarrow\; \CH^n(T^\vee(W)) \;\stackrel{i^*}{\longrightarrow}\; \CH^n(W) \;\twoheadrightarrow \; \bbZ
\]
which is given by the intersection number with the zero section $i: W \hookrightarrow T^\vee(W)$.
\end{defn}

\begin{ex} \label{ex:local-euler}
Over the complex numbers the above degree can be computed as follows. For any constructible function $F: W\to \bbZ$ consider the topological Euler characteristic 
\[
 \chi_\mathrm{top}(W, F) \;=\; \sum_{n\in \bbZ} \, n\cdot \chi_\mathrm{top}(F^{-1}(n)),
\]
where $\chi_\mathrm{top}(F^{-1}(n))$ denotes the alternating sum of the Betti numbers of $F^{-1}(n)$.~In the theory of characteristic classes of singular varieties this definition is applied to a particular constructible function $\mathrm{Eu}_X: W \to \bbZ$, the {\em local Euler obstruction} of a subvariety $X\subseteq W$~\cite[sect.~3]{MacPhersonChern}. Outside of the singular locus $\Sing(X)\subseteq X$ it has the form
\[
 \mathrm{Eu}_X(p) \;=\;
 \begin{cases} 
 0 & \textnormal{for $p\in W\setminus X$}, \\
 1 & \textnormal{for $p\in \Sm(X)$},
 \end{cases} 
\]
but its values on the singular locus depend on the singularities: For instance,  if~$X$ is a curve, $\mathrm{Eu}_X(p)$ is the multiplicity of that curve at $p$. In general, the degree of conormal varieties in definition~\ref{defn:degree} can be expressed as an Euler characteristic by the formula
\[
 \deg(\Lambda_X) \;=\; (-1)^{\dim(X)} \cdot \chi_\mathrm{top}(X, \mathrm{Eu}_X), \medskip
\] 
see~\cite[lemme 1.2.1]{SabbahConormaux}~\cite[prop.~6.1(b)]{Fulton}. The right hand side can be computed easily as soon as we know the local Euler obstruction. For a smooth rational curve $X$ we get $\deg(\Lambda_X)=-2$ while a nodal or cuspidal cubic has $\deg(\Lambda_X)=-3$. Note that a cuspidal cubic is homeomorphic to a smooth rational curve, so the degree of conormal varieties is not a topological invariant. Moreover, it can be negative.  
\end{ex}


\subsection{Proof of the semicontinuity theorems}

Of course there are no rational curves in abelian varieties, and in the case of abelian varieties the degree behaves much better. By~\cite[th.~1]{WeissauerDegenerate} we have the following result (see section~\ref{sec:abvar}):

\begin{prop} \label{prop:degree-positive}
If $W=A$ is an abelian variety, then \smallskip
\begin{itemize} 
\item $\deg(\Lambda_X)\geq 0$ for any $X\subset A$, \smallskip 
\item $\deg(\Lambda_X)>0$ if and only if $X$ is of general type, \smallskip
\item $\deg(\Lambda_X)=\deg(\gamma_X)$ for divisors $X\subset A$ with Gauss map $\gamma_X$. \smallskip
\end{itemize} 
\end{prop}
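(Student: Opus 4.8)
The plan is to prove the three bullet points by reducing everything to the Euler characteristic formula of example~\ref{ex:local-euler} together with the positivity theorem of Weissauer. First I would recall that over $\bbC$ (and then by Lefschetz-type specialization over an arbitrary algebraically closed field of characteristic zero, or by a direct algebraic argument) one has
\[
 \deg(\Lambda_X) \;=\; (-1)^{\dim X}\,\chi_\mathrm{top}(X, \mathrm{Eu}_X).
\]
Since $\mathrm{Eu}_X$ is a $\bbZ$-linear combination of the constant functions $\one_{Z}$ for $Z$ running through $X$ and its singular strata, additivity of $\chi_\mathrm{top}(-,\cdot)$ lets me write $(-1)^{\dim X}\chi_\mathrm{top}(X,\mathrm{Eu}_X)$ as an integer combination of the signed Euler characteristics $(-1)^{\dim Z}\chi_\mathrm{top}(Z)$ of locally closed subvarieties $Z\subseteq A$. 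By~\cite[th.~1]{WeissauerDegenerate}, for any such $Z$ one has $(-1)^{\dim Z}\chi_\mathrm{top}(\overline Z)\geq 0$, with equality precisely when $\overline Z$ is not of general type; moreover the defect of the inclusion–exclusion (the contribution of lower-dimensional strata, which come with opposite sign of $(-1)^{\dim Z}$) needs to be controlled. This is where I expect the main work to lie.

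The cleanest way around the sign bookkeeping is to invoke the known nonnegativity of the characteristic cycle / Euler obstruction combination directly. Concretely, MacPherson's theory gives that the conormal cycle $\Lambda_X$ (with its natural multiplicities) equals, up to sign, the characteristic cycle $\mathrm{CC}(\mathrm{IC}_X)$ plus a nonnegative combination of conormal cycles $\Lambda_Z$ of proper subvarieties $Z\subsetneq X$ — this is exactly the content of the relation between the ``Eu'' basis and the ``IC'' basis of constructible functions, the transition matrix being unipotent with nonnegative off-diagonal entries. Since $\deg$ is a homomorphism on $\scrL(A)$, it therefore suffices to prove $\deg(\mathrm{CC}(\mathrm{IC}_Z)) \geq 0$ for every irreducible $Z\subseteq A$, with strict inequality iff $Z$ is of general type, and then bootstrap by induction on $\dim X$. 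But $\deg(\mathrm{CC}(\mathrm{IC}_Z))=(-1)^{\dim Z}\chi_\mathrm{top}(Z,\mathrm{IC}_Z)$ is exactly the quantity governed by Weissauer's theorem (whose proof, via the generic vanishing / Kashiwara estimate machinery, gives precisely this nonnegativity and the characterization of its vanishing locus). So the first two bullets follow by this two-step reduction: (i) express $\deg(\Lambda_X)$ as a nonnegative-coefficient combination of the $\deg(\mathrm{CC}(\mathrm{IC}_Z))$, and (ii) apply Weissauer. For strictness in the second bullet one uses that the coefficient of $\mathrm{CC}(\mathrm{IC}_X)$ itself is $1$, so if $X$ is of general type its positive contribution is not cancelled, while if $X$ is negligible one checks all $Z$ appearing are negligible too (a component of $\Sing(X)$ of a negligible $X$ is again stable under the same abelian subvariety, hence negligible).

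For the third bullet, the identity $\deg(\Lambda_X)=\deg(\gamma_X)$ for a reduced ample divisor $X\subset A$, I would argue geometrically on the cotangent bundle. Writing $T^\vee A = A\times V$ with $V=H^0(A,\Omega^1_A)$, the conormal variety $\Lambda_X$ is the closure of $\{(x,\xi): x\in\Sm(X),\ \xi\in\P$-cone over the conormal line$\}$, and the Gauss map $\gamma_X: X\dashrightarrow \P V$ is literally the projectivization of the second-coordinate projection $\Lambda_X\to V$. The degree in definition~\ref{defn:degree} is the intersection of $\Lambda_X$ with the zero section $A\times\{0\}$, which by the projection/conic structure equals the degree of a generic fiber of a related linear projection; unwinding this — for instance by choosing a generic cotangent direction $\xi_0\in V$ and intersecting $\Lambda_X$ with $A\times\{t\xi_0\}$, letting $t\to 0$ and using that $\Lambda_X$ is conic so the intersection number is constant in $t$ — one finds it equals the number of smooth points of $X$ with conormal direction $[\xi_0]$, i.e.\ $\#\gamma_X^{-1}([\xi_0])=\deg(\gamma_X)$ for generic $[\xi_0]$. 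One must check that the generic fiber of $\gamma_X$ is reduced and consists of exactly that many honest points (no excess contribution from $\Sing(X)$ or from positive-dimensional fibers), which follows because $\Lambda_X$ is irreducible of dimension $g=\dim A$, the projection to $V$ is generically finite (ampleness), and generic smoothness in characteristic zero applies. The main obstacle in the whole proof is really the first step of the first two bullets — getting the nonnegativity of the coefficients in the $\Lambda$-to-$\mathrm{CC}(\mathrm{IC})$ expansion and matching it cleanly with the hypotheses of~\cite[th.~1]{WeissauerDegenerate}; the divisor computation in the third bullet is comparatively routine once the conic structure is exploited.
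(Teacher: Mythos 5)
Your argument for the first two bullets rests on a sign error that breaks the reduction. The relation between conormal cycles and characteristic cycles of intersection complexes goes the opposite way from what you assert: one has $\CC(\delta_X) = \Lambda_X + \sum_{Z\subsetneq X} m_Z\Lambda_Z$ with $m_Z\geq 0$ (effectivity of characteristic cycles of perverse sheaves, with $\Lambda_X$ occurring with multiplicity one), hence $\Lambda_X = \CC(\delta_X) - \sum_{Z} m_Z\Lambda_Z$, and the inverse transition matrix expressing the $\Lambda_Z$ in the $\CC(\delta_Z)$-basis is unipotent but \emph{not} nonnegative off the diagonal. Example~\ref{ex:cusp} already exhibits this: for a cuspidal cubic $\Lambda_X=\CC(\delta_X)-\CC(\delta_{\{p\}})$. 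So even granting your induction hypothesis $\deg(\Lambda_Z)\geq 0$ for proper $Z$, you only obtain the upper bound $\deg(\Lambda_X)\leq \deg(\CC(\delta_X))$; neither $\deg(\Lambda_X)\geq 0$ nor strict positivity for $X$ of general type follows, since the positive term $\deg(\CC(\delta_X))$ could a priori be cancelled by the subtracted ones. That the correction really is subtracted on abelian varieties is visible in the paper's final example: for a hyperelliptic Jacobian with $g=4$ one has $\deg(\Lambda_\Theta)=\deg(\gamma_\Theta)=8$ while $\deg(\CC(\delta_\Theta))=\chi_\IC(\Theta)=14$.

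The repair is exactly your ``comparatively routine'' computation for the third bullet, which is in fact the paper's proof of all three bullets at once (lemma~\ref{lem:degreGauss}, via \cite[prop.~2.2]{FKGauss}), run in arbitrary codimension: since $A\times\{0\}$ is rationally equivalent to $A\times\{\xi\}$ in $T^\vee A=A\times V$ and $\Lambda_X$ is conic of dimension $\dim V$, the intersection number $\deg(\Lambda_X)$ equals the cardinality of a generic fiber of the projection $\bbP\Lambda_X\to\bbP V$, i.e.\ the degree of the generalized Gauss map (and $0$ if it is not dominant). Nonnegativity is then automatic; the divisor statement is the special case where $\bbP\Lambda_X\to X$ is birational so that this projection is the classical Gauss map; and the second bullet reduces to the fact that the projection is dominant iff $X$ is not pulled back from a proper abelian quotient (\cite[th.~1]{WeissauerDegenerate}) iff $X$ is of general type (\cite[th.~3]{Abramovich}). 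Keep that computation, promote it to all codimensions, and discard the $\CC(\delta)$-expansion.
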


This easily implies theorem~\ref{thm:semicontinuous} when combined with the principle of Lagrangian specialization which we recall in section~\ref{sec:specialization}: For any flat family of subvarieties in a smooth ambient 1-parameter family, the limit of their conormal varieties is an effective conic Lagrangian cycle whose support contains the conormal variety to the central fiber as a component, and the total degree of the limit cycle equals the degree of a general fiber. The same argument shows that our semicontinuity result holds not only for divisors but for subvarieties of any codimension: 

\begin{thm} \label{thm:semicontinuous-in-any-codim}
Let $A \to S$ be an abelian scheme over a variety $S$, and let $X\subset A$ be an arbitrary family of subvarieties which is flat over $S$. Then for each $d\in \bbN$ the subsets
$
 S_d = \bigl\{ s\in S \mid \deg(\Lambda_{X_s}) \leq d \bigr\} \subseteq  S
$ 
are closed in the Zariski topology.
\end{thm}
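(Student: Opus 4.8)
The plan is to run the argument already sketched above for Theorem~\ref{thm:semicontinuous}, but now directly with $\deg(\Lambda_{X_s})$ in place of the Gauss degree $\deg(\gamma_{X_s})$, so that the reduction to divisors through the third bullet of Proposition~\ref{prop:degree-positive} is no longer needed. First I would observe that $S_d$ is a \emph{constructible} subset of $S$: the conormal cycles $\Lambda_{X_s}$ of the flat family $X\subset A$ over $S$ assemble into a relative conormal cycle, so the fibrewise degree $s\mapsto\deg(\Lambda_{X_s})$ is a constructible function on $S$ — over $\bbC$ it is the Euler characteristic $(-1)^{\dim X_s}\chi_\mathrm{top}(X_s,\mathrm{Eu}_{X_s})$ of Example~\ref{ex:local-euler}, which is constructible in $s$, and the general characteristic-zero case follows by spreading out to a finitely generated field and embedding it into $\bbC$. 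Since a constructible subset of a variety is closed as soon as it is stable under specialization, and stability under specialization can be tested after base change along a discrete valuation ring, it suffices to prove: for every trait $T=\Spec R$ with a morphism $T\to S$, generic point $\eta$ and closed point $0$, if $\deg(\Lambda_{X_\eta})\le d$ then $\deg(\Lambda_{X_0})\le d$.

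Over $T$ the abelian scheme $A_T\to T$ is a smooth one-parameter family and $X_T\subset A_T$ is flat over $T$, so the Lagrangian specialization principle recalled in Section~\ref{sec:specialization} applies to it. It produces the flat limit $\Lambda_0:=\lim_{s\to 0}\Lambda_{X_s}$, an effective conic Lagrangian cycle on $T^\vee(A_0)$ which contains $\Lambda_{X_0}$ among its components and whose total degree equals that of a general fibre, $\deg(\Lambda_0)=\deg(\Lambda_{X_\eta})$; hence $\Lambda':=\Lambda_0-\Lambda_{X_0}$ is again an effective conic Lagrangian cycle on $T^\vee(A_0)$. Because $A_0$ is an abelian variety, Proposition~\ref{prop:degree-positive} gives $\deg(\Lambda_Z)\ge 0$ for every subvariety $Z\subseteq A_0$, and hence $\deg(\Lambda')\ge 0$ by additivity of the degree homomorphism. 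Therefore $\deg(\Lambda_{X_0})\le\deg(\Lambda_{X_0})+\deg(\Lambda')=\deg(\Lambda_0)=\deg(\Lambda_{X_\eta})\le d$, which is what we had to show; thus $S_d$ is stable under specialization and so closed.

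The only substantial ingredient is the Lagrangian specialization principle itself — the existence of the flat limit $\Lambda_0$, its effectivity and Lagrangian character, the fact that $\Lambda_{X_0}$ occurs in it, and the conservation of total degree — recalled in Section~\ref{sec:specialization}; this is where the hypothesis $\mathrm{char}(k)=0$ enters, and it is the step I expect to require the most care, in particular in setting it up relatively over the abelian scheme and in controlling the behaviour under base change. The remaining points are routine: the constructibility reduction, the additivity of the degree homomorphism, and the bookkeeping when the special fibre $X_0$ is reducible or non-reduced — in which case $\Lambda_{X_0}$ is read as the conormal cycle of the associated cycle of $X_0$, and $\deg(\Lambda_{X_0})\ge 0$ still holds term by term, so the argument is unaffected.
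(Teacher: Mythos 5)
Your proposal is correct and follows essentially the same route as the paper: constructibility of $s\mapsto\deg(\Lambda_{X_s})$, reduction to a one-parameter family, and Lagrangian specialization (conservation of the total degree plus the fact that $\Lambda_{X_0}$ occurs effectively in the limit cycle) combined with the nonnegativity of conormal degrees on abelian varieties from Proposition~\ref{prop:degree-positive}. The only cosmetic difference is your justification of constructibility via local Euler obstructions and the Lefschetz principle, where the paper instead invokes Lemma~\ref{lem:const} and the constancy of $\deg(\sp_s(\Lambda_{X/S}))$ from Proposition~\ref{prop:degree-constant}.
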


It remains to prove theorem~\ref{thm:jump}. Given the interpretation for the degree of Gauss maps in prop.~\ref{prop:degree-positive}, the proof has nothing to do with abelian varieties: In section~\ref{sec:jump} we show that for any flat family of divisors on a smooth 1-parameter variety, the specialization of their conormal varieties contains an extra component whenever the singular locus of the fiber jumps. While the final criterion is phrased only for divisors, we formulate our arguments as far as possible for subvarieties in arbitrary codimension to get beyond theorem~\ref{thm:jump} (see example~\ref{ex:higher-codim}). This is important even if one only wants to study singularities of divisors: In the theory of Chern classes for singular varieties one attaches to any subvariety $X\subset A$ a characteristic cycle of the form
\[ \Lambda \;=\; \Lambda_X + \sum_Z m_Z \Lambda_Z 
\] 
where $Z$ runs through certain strata in $\Sing(X)$~\cite{Kennedy, SabbahConormaux}, and the topologically meaningful invariant that appears in generalizations of the Gauss-Bonnet index formula is the total degree $\deg(\Lambda)$ involving all the strata.


\subsection{A topological view on jump loci}

In section~\ref{sec:IC}, which is not used in the rest of the paper, we deduce from our previous results a general semicontinuity theorem for the intersection cohomology of varieties over the complex numbers. Recall that for a complex variety $X$, the intersection cohomology $\mathrm{IH}^\bullet(X)$ only depends on its homeomorphism type in the Euclidean topology; it coincides with Betti cohomology in the smooth case but is better behaved in general~\cite{BorelIC,GMIntersectionI, GMIntersectionII, KirwanWoolf, MaximIHP}. 
We denote by
\[
 \chi_\mathrm{IC}(X) \;=\; \sum_{i\geq 0} \; (-1)^{i+\dim(X)} \dim \mathrm{IH}^i(X)
\]
the Euler characteristic of the intersection cohomology. This Euler characteristic is usually not semicontinuous in families, it can jump in both directions. But for families of finite branched covers of subvarieties in complex abelian varieties this does not happen (see lemma~\ref{lem:signed-Euler} and corollary~\ref{cor:IC-semicontinuous}):

\begin{thm}
Let $f: X \to S$ be a family of varieties such that each fiber $X_s$ is generically reduced and admits a finite morphism to an abelian variety. Then for each $d\in \bbN_0$ the loci
\[
 S_d \;:=\; \{ s\in S \mid \chi_\IC(X_s) \leq d \} \;\subseteq\; S
\]
are closed in the Zariski topology.
\end{thm}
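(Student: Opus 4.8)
The plan is to reduce the statement about intersection cohomology to the already-established semicontinuity of the degree of conormal varieties (Theorem~\ref{thm:semicontinuous-in-any-codim}). The bridge is the classical fact from the theory of characteristic cycles of perverse sheaves: over $\bbC$, for a variety $X$ with a finite morphism $\pi: X \to A$ to an abelian variety $A$, the pushforward $\pi_* \IC_X$ is a perverse sheaf on $A$ whose characteristic cycle $\CC(\pi_* \IC_X) = \sum_Z m_Z \Lambda_Z$ is an effective conic Lagrangian cycle supported on conormal varieties to subvarieties $Z \subseteq \pi(X)$, with the coefficient of $\Lambda_{\pi(X)}$ equal to the generic degree of $\pi$. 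The index theorem for characteristic cycles (the microlocal Gauss–Bonnet formula, e.g.~\cite{GinzburgCharacteristic, SabbahConormaux, Kennedy}) gives
\[
 \chi_\mathrm{top}(A, \pi_* \IC_X) \;=\; \deg\bigl(\CC(\pi_* \IC_X)\bigr) \;=\; \sum_Z m_Z \deg(\Lambda_Z),
\]
and since $\chi_\mathrm{top}(A, \pi_* \IC_X) = \chi_\mathrm{top}(X, \IC_X) = (-1)^{\dim X}\chi_\IC(X)$ up to the usual sign bookkeeping, we obtain $\chi_\IC(X) = \sum_Z m_Z \deg(\Lambda_Z)$. By Proposition~\ref{prop:degree-positive} each $\deg(\Lambda_Z) \geq 0$, and each $m_Z \geq 0$ by effectivity of characteristic cycles; so $\chi_\IC(X)$ is a \emph{sum of nonnegative terms}, which is exactly the structural feature that made the degree of the Gauss map semicontinuous.

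The next step is to make this work in families. Given $f: X \to S$ with each fiber $X_s$ generically reduced and equipped with a finite map to an abelian variety, I would first reduce to the case where $S$ is a curve (a constructibility argument, since the loci $S_d$ are constructible: the function $s \mapsto \chi_\IC(X_s)$ is constructible by generic base-change for the IC sheaf, so it suffices to check closedness along curves through each point). After base change to a curve $S'$ with central point $0$, I need a family of finite maps $X \to \mathcal{A}$ to an abelian scheme over $S'$; this requires some care when the abelian variety attached to $X_s$ varies, but one can work locally and, after possibly shrinking and finite base change, spread out the finite morphism. Then I apply Lagrangian specialization (section~\ref{sec:specialization}) to the family of conormal-type cycles $\CC((\pi_s)_* \IC_{X_s})$: the specialization at $0$ is an effective conic Lagrangian cycle whose total degree equals $\chi_\IC(X_s)$ for general $s$, and whose support contains $\CC((\pi_0)_* \IC_{X_0})$. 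Since all the relevant degrees are nonnegative, the total degree of the central cycle is $\geq \chi_\IC(X_0)$, hence $\chi_\IC(X_0) \leq \chi_\IC(X_s)$ for general $s$. This gives closedness of the $S_d$ along $S'$, and by the constructibility reduction, in general.

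One point deserving attention: I should verify that specialization of the characteristic cycle $\CC((\pi_s)_* \IC_{X_s})$ is controlled — i.e., that as $s \to 0$, the family of perverse sheaves $(\pi_s)_* \IC_{X_s}$ behaves well enough that $\lim_{s\to 0} \CC((\pi_s)_* \IC_{X_s})$ is effective and dominates $\CC((\pi_0)_* \IC_{X_0})$. The cleanest route is to apply Lagrangian specialization directly to the cycles, viewing the family $X \to S'$ inside $\mathcal{A} \times_{S'} T^\vee(\mathcal{A}/S')$ and taking conormal varieties fiberwise; effectivity of the specialization is exactly the content recalled in section~\ref{sec:specialization}, and the inequality $\chi_\IC(X_0) \le \chi_\IC(X_s)$ then follows purely from positivity of $\deg(\Lambda_Z)$ in abelian varieties.

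The main obstacle I anticipate is the \emph{family of abelian varieties / finite morphisms} issue: the hypothesis only says each \emph{fiber} admits a finite map to \emph{some} abelian variety, with no compatibility across $S$. Resolving this — producing, at least étale-locally on $S$ near each point, an abelian scheme $\mathcal{A}/S$ and a finite $S$-morphism $X \to \mathcal{A}$ extending the fiberwise ones — is the technical heart. One approach is to note that the abelian variety can be taken to be the Albanese (or a quotient of it) of a resolution, and Albanese varieties spread out in families after shrinking $S$; alternatively, since $\chi_\IC$ is intrinsic to $X_s$ and unchanged by replacing the target abelian variety with any isogenous or larger one, one has enough flexibility to fit things into a family. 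Once the family of finite morphisms is in place, the rest is a formal combination of the index formula, Proposition~\ref{prop:degree-positive}, and Lagrangian specialization as in the proof of Theorem~\ref{thm:semicontinuous}.
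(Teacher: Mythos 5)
Your first paragraph is sound, and the overall strategy (index formula, positivity coming from finite maps to abelian varieties, Lagrangian specialization) is the right one. But the family argument has two genuine gaps, and the obstacle you correctly identify as ``the technical heart'' is one you have created for yourself: the paper never needs a relative finite morphism $X\to\mathcal{A}$ to an abelian scheme, and producing one is indeed hopeless in general (the target abelian varieties of the fibers have no reason to fit together, and Albanese spreading-out does not give a \emph{finite} morphism compatible with the given fiberwise ones). The way around this is to package the positivity as an \emph{intrinsic} property of each fiber: say $X_s$ has the signed Euler characteristic property if $\chi(X_s,P)\geq 0$ for every perverse sheaf $P$ on $X_s$. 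This holds for any variety with a finite morphism to an abelian variety, one fiber at a time, because a finite pushforward preserves perversity and the Euler characteristic and one then quotes Franecki--Kapranov on the abelian variety. With that in hand, the Lagrangian specialization is carried out in an \emph{arbitrary} smooth proper ambient family $W\to S$ (e.g.\ a relative projective space), and the degree of the defect cycle is identified with $\chi(X_0,P(0))$ for a perverse sheaf $P(0)$ supported on $X_0$ --- not, as in your argument, with a sum $\sum_Z m_Z\deg(\Lambda_Z)$ of conormal degrees inside the cotangent bundle of an abelian variety. Note that in a general ambient $W_0$ the individual $\deg(\Lambda_Z)$ can be negative (a rational curve in $\P^2$ has conormal degree $-2$), so your component-by-component positivity only makes sense if the ambient is abelian, which is exactly the unnecessary hypothesis.

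The second gap is the effectivity of $\sp_0(\CC(\delta_X))-\CC(\delta_{X_0})$. Section~\ref{sec:specialization} only gives that $\sp_0(\Lambda_{X/S})$ is effective and contains $\Lambda_{X_0}$ as a component; it says nothing about characteristic cycles of intersection complexes, and your fallback of ``taking conormal varieties fiberwise'' proves semicontinuity of $\deg(\Lambda_{X_s})$ (theorem~\ref{thm:semicontinuous-in-any-codim}), not of $\chi_\IC(X_s)$ --- the two differ precisely when $\CC(\delta_{X_s})$ has components besides $\Lambda_{X_s}$, which is the generic situation for singular fibers. The correct bridge is sheaf-theoretic: by Ginzburg, $\CC(\Psi_0(P))=\sp_0(\CC(P))$ for the nearby cycles functor $\Psi_0$, and for a flat family with \emph{generically reduced} fibers the semisimplification of $\Psi_0(\delta_X)$ is $\delta_{X_0}\oplus P(0)$ with $P(0)$ perverse and supported on $\Sing(X_0)$; hence $\sp_0(\CC(\delta_X))=\CC(\delta_{X_0})+\CC(P(0))$ with $\CC(P(0))$ effective and $\deg(\CC(P(0)))=\chi(X_0,P(0))\geq 0$ by the signed Euler characteristic property of $X_0$. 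This is where the generic reducedness hypothesis enters, and it is the step your proposal leaves unjustified. Your constructibility reduction to curves and the conservation of total degree under specialization are fine as stated.
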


This puts our results in a topological context, since the intersection cohomology of a complex variety only depends on its homeomorphism type. For instance, it follows from the above that a singular theta divisor cannot be  homeomorphic to a smooth one (recall that there are examples of normal varieties which are singular but homemorphic to smooth varieties, such as those by Brieskorn~\cite{BrieskornDiffTop, BrieskornSingularNormal}). In corollary~\ref{cor:schottky-topology} we will see that the Jacobian locus appears in the stratification of $\scrA_g$ by the intersection cohomology of the theta divisor, so we obtain:

\begin{cor} \label{cor:intro-schottky-topology}
The locus of Jacobian varieties in $\scrA_g$ is an irreducible component of the closure of the locus of all ppav's whose theta divisor is homeomorphic to a theta divisor on a Jacobian variety.
\end{cor}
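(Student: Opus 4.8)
The plan is to deduce this from the preceding semicontinuity theorem for $\chi_\IC$ (corollary~\ref{cor:IC-semicontinuous}, applied to the universal theta divisor) together with corollary~\ref{cor:schottky-topology}, the key point being that $\chi_\IC$ depends only on the Euclidean homeomorphism type. Write $\scrJ_g\subseteq\scrA_g$ for the closure of the locus of Jacobians; it is irreducible, being the closure of the image of the Torelli morphism from the irreducible space $\scrM_g$. Let $U\subseteq\scrA_g$ be the set of pairs $(A,\Theta)$ for which $\Theta$ is homeomorphic to the theta divisor of some Jacobian, and put $\scrH:=\overline{U}$. Since the theta divisor of a Jacobian is trivially homeomorphic to itself, every Jacobian with its principal polarization lies in $U$, so $\scrJ_g\subseteq\scrH$; it therefore suffices to show that $\scrJ_g$ is a \emph{maximal} irreducible closed subset of $\scrH$.

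First I would record the semicontinuity input. Over a level cover $S\to\scrA_g$ the universal abelian scheme carries a relative theta divisor, flat over $S$, whose fibres $\Theta_A$ are reduced and admit the finite morphism $\Theta_A\hookrightarrow A$ into an abelian variety; so corollary~\ref{cor:IC-semicontinuous} applies over $S$, and pushing forward along the finite surjection $S\to\scrA_g$ shows that the loci $\Phi_d:=\{(A,\Theta)\in\scrA_g\mid\chi_\IC(\Theta)\le d\}$ are Zariski closed. Pulling back along the Torelli morphism, $[C]\mapsto\chi_\IC(\Theta_{\mathrm{Jac}(C)})$ is a lower semicontinuous, hence constructible, function on $\scrM_g$; let $d_0$ be its value at the generic point. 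Then $\{[C]\mid\chi_\IC(\Theta_{\mathrm{Jac}(C)})\le d_0\}$ is closed and contains a dense open subset of the irreducible variety $\scrM_g$, hence equals $\scrM_g$: that is, $d_0$ is the \emph{maximum} of $\chi_\IC$ over all theta divisors of Jacobians of genus $g$.

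Now topological invariance closes the loop. If $(A,\Theta)\in U$, say with $\Theta$ homeomorphic to $\Theta_{\mathrm{Jac}(C)}$, then $\chi_\IC(\Theta)=\chi_\IC(\Theta_{\mathrm{Jac}(C)})\le d_0$, since $\chi_\IC$ is a homeomorphism invariant. Hence $U\subseteq\Phi_{d_0}$, and as $\Phi_{d_0}$ is closed, $\scrH=\overline{U}\subseteq\Phi_{d_0}$. Thus $\scrJ_g\subseteq\scrH\subseteq\Phi_{d_0}$. By corollary~\ref{cor:schottky-topology} the Jacobian locus $\scrJ_g$ is an irreducible component of $\Phi_{d_0}$, i.e.\ a maximal irreducible closed subset of $\Phi_{d_0}$; consequently every irreducible closed $V$ with $\scrJ_g\subseteq V\subseteq\scrH$ satisfies $\scrJ_g\subseteq V\subseteq\Phi_{d_0}$ and hence $V=\scrJ_g$. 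So $\scrJ_g$ is a maximal irreducible closed subset of $\scrH$, that is, an irreducible component, which is the assertion.

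The whole argument rests on corollary~\ref{cor:schottky-topology}, which I expect to be the main obstacle: it must identify $d_0$ and, more seriously, show that $\scrJ_g$ is genuinely a component of $\Phi_{d_0}$ rather than being absorbed into a larger stratum. I expect its proof to compare $\chi_\IC(\Theta)$ with the degree of the Gauss map via lemma~\ref{lem:signed-Euler}: writing the characteristic cycle as $\CC(\IC_\Theta)=\Lambda_\Theta+\sum_Z m_Z\Lambda_Z$ with $m_Z\ge 0$ and $Z$ running over strata of $\Sing(\Theta)$, the microlocal index formula and proposition~\ref{prop:degree-positive} give $\chi_\IC(\Theta)=\deg(\gamma_\Theta)+\sum_Z m_Z\deg(\Lambda_Z)$ with every summand non-negative; hence the Gauss loci $\scrG_d$ are refined by the $\chi_\IC$-stratification, and the dimension-theoretic argument behind corollary~\ref{cor:Schottky_intro}(a) — that $\scrJ_g$ is a component of $\scrG_{\binom{2g-2}{g-1}}$ — should carry over to the threshold $d_0$, which in general exceeds $\binom{2g-2}{g-1}$ because $\Sing(\Theta)$ of a generic Jacobian already contributes a component of general type. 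The remaining points above are bookkeeping: one works on the coarse space of $\scrA_g$ (or a level cover) so that the $\Phi_d$ are honest closed subvarieties, and one keeps in mind that $\chi_\IC$ is lower, not upper, semicontinuous, so the generic Jacobian realises the largest, not the smallest, value of $\chi_\IC(\Theta)$.
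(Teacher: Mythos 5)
Your proposal is correct and follows essentially the route the paper intends: sandwich the closure $\scrH$ of the homeomorphism locus between the Jacobian locus and the closed set $\scrX_{d_0}=\Phi_{d_0}$ using the homeomorphism invariance of $\chi_\IC$ and corollary~\ref{cor:IC-semicontinuous}, then invoke corollary~\ref{cor:schottky-topology}(2) to conclude that a component of the larger set squeezed this way must be a component of $\scrH$. One slip in your closing speculation: the generic value $d_0$ does \emph{not} exceed $\binom{2g-2}{g-1}$ --- for a generic (non-hyperelliptic) Jacobian the cycle $\CC(\delta_\Theta)=\Lambda_\Theta$ is irreducible by Bressler--Brylinski, so $\Sing(\Theta)$ contributes nothing and $\chi_\IC(\Theta)=\deg(\gamma_\Theta)=\binom{2g-2}{g-1}$ exactly; this equality is precisely what makes corollary~\ref{cor:schottky-topology}(2) apply at your threshold $d_0$, and without it your appeal to that corollary would not match the $d$ it actually concerns.
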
 

It seems an interesting problem to study the topology of theta divisors on abelian varieties in more detail. 


\subsection*{Acknowledgements}

The authors would like to thank Sam Grushevsky, Ariyan Javanpeykar, Constantin Podelski, Claude Sabbah, Edoardo Sernesi and the referee for helpful comments. G.C. is funded by the MIUR {\it Excellence Department Project}, awarded to the Department of Mathematics, University of Rome, Tor Vergata, CUP E83C18000100006, and PRIN 2017 {\it Advances in Moduli Theory and Birational Classification}. T.K. is supported by DFG Research Grant 430165651 {\it Characteristic Cycles and Representation Theory}.
\medskip

\section{Lagrangian specialization} \label{sec:specialization}

For convenience we include in this section a self-contained review of some basic facts about the specialization of Lagrangian cycles, which was introduced in relation with Chern-MacPherson classes~\cite{SabbahConormaux} and  nearby cycles for $\scrD$-modules and perverse sheaves~\cite{GinzburgCharacteristic}. We work in a relative setting over a smooth curve $S$. The family of our ambient spaces is given by a smooth dominant morphism of varieties $f:W\to S$ where $\dim(W)=n+1$. Let $X\subset W$ be a reduced closed subvariety. The relative smooth locus \medskip
\[
 \Sm(X/S) \;=\; \{ \, x\in \Sm(X) \mid \; \textnormal{the restriction $f|_X: X\to S$ is smooth at $x$} \,\}
\]
is nonempty iff $\dim f(X)>0$, in which case $X\to S$ is flat and $\Sm(X/S)\subset X$ is an open dense subset because $\mathrm{char}(k)=0$. Any $x\in \Sm(X/S)$ is a smooth point of the fiber $X_{s} = f^{-1}(s)\cap X$ over the image point $s=f(x)$. Hence inside the total space
\[
 T^\vee(W/S) \;=\; T^\vee(W)/f^{-1}T^\vee(S)
\]
of the relative cotangent bundle, we define the {\em relative conormal variety} to $X$ as the closure \medskip 
\[
 \Lambda_{X/S} = \overline{\strut \{ (x,\xi) \in T^\vee(W/S) \mid x\in \Sm(X/S), \; \xi \perp T_x X_{f(x)} \}} \;\subset\; T^\vee(W/S). \medskip 
\]

\begin{rem} \label{rem:bmm}
In~\cite{BMM} the relative conormal variety is instead defined as the closure inside the absolute cotangent bundle. This notion of relative conormal variety is obtained from ours by base change via the quotient map $T^\vee(W) \twoheadrightarrow T^\vee(W/S)$, i.e.~we have
 \[
 T^\vee(W) \times_{T^\vee(W/S)} \Lambda_{X/S} 
 \;=\; 
 \overline{ \strut \{ (x,\xi) \in T^\vee(W) \mid x\in \Sm(X/S), \; \xi \perp T_x X_{f(x)} \}}.
 \]
\end{rem}

Indeed, both sides are irreducible closed subvarieties of $T^\vee( W)|_X$. For the right hand side this holds by definition, for the left hand side it follows from the fact that $\Lambda_{X/S}\subset T^\vee(X/S)$ is an irreducible closed subvariety and $T^\vee W\to T^\vee(X/S)$ is a fibration with irreducible fibers. So it suffices to show that both sides agree over some open dense $U \subset X$. We can assume $X$ is flat over $S$ and take $U=\Sm(X/S)$, in which case the claim becomes obvious.

\begin{lem} \label{rem:flat}
If $X$ is flat and irreducible over $S$, then so is $\Lambda_{X/S}$.
\end{lem}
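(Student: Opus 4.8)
The statement asserts that if $X \subset W$ is flat and irreducible over the smooth curve $S$, then the relative conormal variety $\Lambda_{X/S} \subset T^\vee(W/S)$ is again flat and irreducible over $S$. The plan is to prove irreducibility first and flatness second, since over a smooth curve a reduced, irreducible scheme dominating $S$ is automatically flat (a torsion-free module over a discrete valuation ring is flat), so in fact the irreducibility statement, together with the observation that $\Lambda_{X/S}$ dominates $S$, will yield flatness essentially for free.

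For irreducibility, I would argue as follows. Since $X$ is flat and irreducible over $S$ with $\dim f(X) > 0$, the relative smooth locus $\Sm(X/S)$ is a nonempty open dense subset of $X$, as recalled in the text just before the lemma. By definition $\Lambda_{X/S}$ is the closure in $T^\vee(W/S)$ of the set
\[
 \Lambda_{X/S}^\circ \;=\; \{ (x,\xi) \in T^\vee(W/S) \mid x \in \Sm(X/S), \; \xi \perp T_x X_{f(x)} \}.
\]
The projection $\pi \colon T^\vee(W/S) \to W$ restricts on $\Lambda_{X/S}^\circ$ to a surjection onto $\Sm(X/S)$ whose fiber over a point $x$ is the conormal space $N^\vee_x(X_{f(x)} \subset W_{f(x)})$, an affine space of dimension $n - \dim(X_{f(x)}) = n + 1 - \dim(X)$ — constant because $X$ is flat over the curve $S$, so all fibers $X_s$ have the same dimension $\dim(X) - 1$. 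Thus $\Lambda_{X/S}^\circ \to \Sm(X/S)$ is a vector bundle (the relative conormal bundle of the smooth family $\Sm(X/S) \to S$ inside $W \to S$), hence has irreducible total space because its base $\Sm(X/S)$ is irreducible. Taking closure preserves irreducibility, so $\Lambda_{X/S}$ is irreducible.

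For flatness: $\Lambda_{X/S}$ is irreducible and reduced, and it maps to $S$ through the composite $\Lambda_{X/S} \hookrightarrow T^\vee(W/S) \to W \xrightarrow{f} S$. Since $\Lambda_{X/S}^\circ$ already dominates $\Sm(X/S)$, which dominates $S$ (as $\dim f(X) > 0$ forces $f|_X$ dominant, $S$ being a curve), the morphism $\Lambda_{X/S} \to S$ is dominant, hence — $\Lambda_{X/S}$ being integral and $S$ a smooth curve — flat. The main (and really only) obstacle is making sure the fiber dimension of the conormal spaces is genuinely constant, which is where flatness of $X/S$ enters: it guarantees $\dim X_s$ is independent of $s$, so that $\Lambda_{X/S}^\circ$ is a bundle of constant rank rather than merely a stratified object, and there is no room for the closure to pick up extra components. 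Everything else is standard commutative algebra over a Dedekind base.
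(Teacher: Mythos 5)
Your proof is correct and follows essentially the same route as the paper: the open locus over $\Sm(X/S)$ is the total space of a (conormal) vector bundle over an irreducible smooth base, hence irreducible, its closure is integral, and an integral scheme dominating a smooth curve is flat. The extra care you take about the constancy of the conormal rank is a reasonable elaboration but does not change the argument.
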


\begin{proof}
$\Lambda_{X/S}$ is defined as the schematic closure of a locally closed subscheme $V$ of the relative cotangent bundle $T^\vee(A/S)$. The subscheme $V$ is the total space of a vector bundle over a smooth variety, so it is a smooth variety as well. Its schematic closure is integral, and a morphism from an integral scheme to a smooth curve is flat iff it is dominant~\cite[chapter III, prop.~9.7]{Hartshorne}.
\end{proof}

\medskip

Relative conormal varieties can be seen as families of conormal varieties. In what follows we denote by
\[
 \scrL(W/S) \;=\; \bigoplus_{X\subset W} \bbZ \cdot \Lambda_{X/S}
\] 
the free abelian group on relative conormal varieties to closed subvarieties $X\subset W$ that are flat over $S$. By the \emph{specialization} of $\Lambda \in \scrL(W/S)$ at~$s\in S(k)$ we mean the cycle
\[
 \sp_s(\Lambda) \;=\; \bigl[ \Lambda \cdot f^{-1}(s) \bigr] 
\]
which underlies the schematic fiber of the morphism $\Lambda_{X/S} \to S$ at~$s$. This is again a conic Lagrangian cycle by the following classical result, see~\cite[prop.~(a), p.~179]{FKM} or in an analytic setup \cite[sect.~1.2]{Teissier-Trang}:

\begin{lem}[{{\bf{Principle of Lagrangian specialization}}}] \label{lem:specialization}
The specialization at $s$ gives a homomorphism 
$$ \sp_s: \quad \scrL(W/S) \;\longrightarrow\; \scrL(W_s) \medskip $$ 
sending effective cycles to effective cycles. On Chow groups it induces the Gysin map in the bottom row of the following commutative diagram:\medskip 
\[
\begin{tikzcd} 
\scrL(W/S) \ar[r, "\sp_s"] \ar[d] & \scrL(W_s) \ar[d] \\
\CH^n(T^\vee(W/S)) \ar[r, "i_s^*"] & \CH^{n-1}(T^\vee(W_s)) 
\end{tikzcd} \medskip
\]
For any closed subvariety $X\subset W$ which is flat over $S$, there is a finite subset $\Sigma \subset S$ such that
\[ 
 \sp_s(\Lambda_{X/S}) 
 \;=\;
 \begin{cases} 
 \Lambda_{X_s} & \textnormal{\em for $s\in S\setminus \Sigma$}, \\
 m_{X_s}\Lambda_{X_s} + \sum_{Z\subset \Sing(X_s)} m_{Z}\Lambda_Z & \textnormal{\em for $s\in \Sigma$},
 \end{cases}   \medskip 
\]
where $m_{X_s}, m_{Z} > 0$ and the sum runs over finitely many subvarieties $Z\subset \Sing(X_s)$.
\end{lem}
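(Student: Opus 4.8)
The plan is to deduce the statement from three inputs: flatness of $\Lambda_{X/S}$ over $S$ (lemma~\ref{rem:flat}), the elementary geometry of the relative cotangent bundle, and the classical principle that a flat limit of conic Lagrangian cycles is again conic Lagrangian — for the last of these I would either run the isotropy argument sketched below or simply invoke~\cite{FKM, Teissier-Trang}. To begin, since $S$ is a smooth curve, $f^{-1}(s)\subset W$ is a Cartier divisor, hence so is its preimage $T^\vee(W_s)\hookrightarrow T^\vee(W/S)$, and the conormal sequence of the smooth divisor $W_s\hookrightarrow W$ identifies $T^\vee(W/S)|_{W_s}$ canonically with $T^\vee(W_s)$. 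For $X\subset W$ flat and irreducible over $S$ the variety $\Lambda_{X/S}$ is integral and dominates $S$ by lemma~\ref{rem:flat}, so $T^\vee(W_s)$ contains no component of it; the schematic fibre $\Lambda_{X/S}\times_S\{s\}$ is therefore a cycle of pure dimension $n$, and I set $\sp_s(\Lambda_{X/S})$ to be its associated cycle, which is effective. Extending $\bbZ$-linearly gives a homomorphism $\sp_s\colon\scrL(W/S)\to\scrL(W_s)$ (the target being as claimed by the next paragraph) which preserves effectivity, and compatibility with the Gysin map is then formal: $i_s^*$ is intersection with the Cartier divisor $T^\vee(W_s)$, which for cycles meeting it properly is computed by the schematic fibre with its natural multiplicities~\cite{Fulton}, so the square commutes.

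Next I would check that every component $Z$ of $\Supp\sp_s(\Lambda_{X/S})$ is a closed conic Lagrangian subvariety. Conicity is automatic, since $\Lambda_{X/S}$ is stable under the fibrewise $\bbG_m$-action on $T^\vee(W/S)$, this action preserves $T^\vee(W_s)$, and being connected it fixes each of the finitely many components of the fibre. For the Lagrangian property, the relative Liouville $1$-form $\theta_{W/S}$ vanishes on the relative conormal bundle, hence on $\Sm(\Lambda_{X/S})$, hence its pullback to a resolution $\tilde\Lambda\to\Lambda_{X/S}$ vanishes identically (a $1$-form on a smooth variety vanishing on a dense open is zero); restricting this pullback to the preimage of $f^{-1}(s)$ and using that $\theta_{W/S}$ restricts on $T^\vee(W_s)$ to the absolute Liouville form $\theta_{W_s}$, one obtains $\theta_{W_s}|_{\Sm(Z)}=0$, so $\omega_{W_s}|_{\Sm(Z)}=d(\theta_{W_s}|_{\Sm(Z)})=0$, and as $Z$ is conic of dimension $n=\tfrac12\dim T^\vee(W_s)$ this forces $Z$ Lagrangian — this is the content of~\cite{FKM, Teissier-Trang}. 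By~\cite{Kennedy} each $Z$ is then the conormal variety of its image $\pi(Z)$, and since $\pi(\Lambda_{X/S})\subseteq\overline{\Sm(X/S)}=X$ we get $\pi(Z)\subseteq X_s$; thus $\sp_s(\Lambda_{X/S})\in\scrL(W_s)$.

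For the precise shape of $\sp_s(\Lambda_{X/S})$, observe that the relative conormal bundle is a vector bundle over $\Sm(X/S)$, which is smooth over $S$, hence is itself smooth over $S$ with reduced fibres; together with generic smoothness (valid because $\mathrm{char}(k)=0$) this yields a finite $\Sigma\subset S$ over whose complement $\Lambda_{X/S}\to S$ is smooth and $X_s$ is reduced, and there $\Lambda_{X/S}\cap T^\vee(W_s)$ coincides over $\Sm(X_s)=\Sm(X/S)\cap X_s$ with the conormal bundle of $\Sm(X_s)$, so $\sp_s(\Lambda_{X/S})=\Lambda_{X_s}$. For $s\in\Sigma$ the same local description shows that the only component of $\sp_s(\Lambda_{X/S})$ meeting $\pi^{-1}(\Sm(X_s))$ is $\Lambda_{X_s}$; and $\Lambda_{X_s}$ does occur, because it lies in $\Lambda_{X/S}\cap T^\vee(W_s)$ (limits of conormal directions to $\Sm(X/S)$ along points approaching $X_s$) and is irreducible of the top dimension $n$, hence a component, necessarily with coefficient $m_{X_s}>0$ since the cycle is effective; every remaining component projects into $X_s\setminus\Sm(X_s)=\Sing(X_s)$, and its multiplicity is positive for the same reason. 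This is the asserted formula.

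I expect the genuine difficulty to be the Lagrangian property of the limit cycle, i.e.\ propagating the vanishing of the Liouville form past the singularities of $\Lambda_{X/S}$ into each component of the special fibre; this is precisely what is supplied by the classical references~\cite{FKM, Teissier-Trang}, and once it is granted the remaining points follow routinely from flatness, generic smoothness and intersection theory.
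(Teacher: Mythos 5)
Your argument is correct and follows essentially the same route as the paper's proof: proper intersection with the Cartier divisor $T^\vee(W_s)$ to get effectivity and compatibility with the Gysin map, $\bbG_m$-stability for conicity, the restriction of the (relative) symplectic/Liouville structure plus the references \cite{FKM, Teissier-Trang} for the Lagrangian property of the limit, Kennedy's lemma to identify the components as conormal varieties, and generic smoothness of $\Lambda_{X/S}\to S$ to isolate the finite set $\Sigma$ and confine extra components over $\Sing(X_s)$. The only cosmetic difference is that you obtain the component $\Lambda_{X_s}$ by exhibiting it inside the fibre as a limit of relative conormal spaces, whereas the paper deduces it from the surjection of the support of the special fibre onto $X_s$; both work.
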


\begin{proof}
Note that $T^\vee(W_s)$ is an effective Cartier divisor in $T^\vee(W)$. It intersects properly any relative conormal variety to a subvariety which is flat over $S$. Hence it is clear that the specialization induces on Chow groups the Gysin map defined in~\cite[sect.~2.6]{Fulton} and sends effective cycles to effective cycles. 

\medskip 

Now take an irreducible subvariety $X\subset W$ which is flat over $S$. By Lemma~\ref{rem:flat} the morphism $\Lambda_{X/S} \to S$ is flat and hence all its fibers are pure dimensional of the same dimension. Furthermore the action of the multiplicative group preserves the fibers of~$T^{\vee}(W/S)\to S$ and so the fibers of $\Lambda_{X/S}\to S$ are unions of conic subvarieties. As the canonical relative symplectic form on $T^{\vee}(W/S)$ restricts to the canonical symplectic form on $T^{\vee}(W_s)$ for every $s$, we conclude that the fibers of $\Lambda_{X/S}\to S$ are also Lagrangian and hence a union of conormal varieties, since the conic Lagrangian subvarieties of the cotangent bundle are precisely the conormal varieties~\cite[lemma~3]{Kennedy}. The coefficients are non-negative as the specialization of effective cycles is effective. Hence $\sp_s(\Lambda_{X/S})$ is a sum of conormal varieties, and since under the morphism $T^\vee(A_s) \to A_s$ its support surjects onto $X_s$, we conclude that one of the appearing components must be $\Lambda_{X_s}$.

\medskip 

As $\Lambda_{X/S}$ is irreducible and we work over a field of characteristic zero, there exists a Zariski open dense subset of $S$ over which the fibers of the morphism $\Lambda_{X/S}\to S$ are reduced and irreducible. We conclude that for $s$ in this Zariski open dense subset of $S$ we have $\sp_s(\Lambda_{X/S})=\Lambda_{X_s}$. Moreover, the specialization cannot have any further components over the relative smooth locus $\Sm(X/S)\subseteq X$, since on that locus also the morphism $\Lambda_{X/S}\to S$ restricts to a smooth morphism.
\end{proof}

\medskip

We have the following consequence of flatness:

\begin{prop} \label{prop:degree-constant}
Let $X\subset W$ be a closed subvariety which is flat over $S$, then the degree 
\[ d = \deg(\sp_s(\Lambda_{X/S})) \quad \textnormal{\em is independent of $s\in S(k)$}. \]
\end{prop}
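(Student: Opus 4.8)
The proposition should follow by realising $\deg(\sp_s(\Lambda_{X/S}))$ as the intersection number of a single, $s$-independent cycle class on the total space $W$ with the fibre $W_s$, and then invoking the projection formula along $f$ to see that this number cannot change with $s$. Here and below we assume, as is implicit in Definition~\ref{defn:degree}, that $f\colon W\to S$ is projective, so that the degree is defined; write $\scrT=T^\vee(W/S)$, a vector bundle of rank $n$ over $W$, and let $z\colon W\hookrightarrow\scrT$ be its zero section.

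The plan, step by step, is as follows. First, since $\Lambda_{X/S}\to S$ is flat (Lemma~\ref{rem:flat}) with $n$-dimensional fibres, the variety $\Lambda_{X/S}$ has dimension $n+1$, so $[\Lambda_{X/S}]\in\CH_{n+1}(\scrT)$ and one may form the $s$-independent class
\[
 \beta \;:=\; z^![\Lambda_{X/S}] \;\in\; \CH_1(W).
\]
Next, fix $s\in S(k)$. The fibre $T^\vee(W_s)$ of $\scrT\to S$ over $s$ is an effective Cartier divisor; together with its inclusion $\iota_s\colon T^\vee(W_s)\hookrightarrow\scrT$, the fibre inclusion $j_s\colon W_s\hookrightarrow W$ and the zero section $z_s\colon W_s\hookrightarrow T^\vee(W_s)$ it fits into a Cartesian square of regular embeddings with $z\circ j_s=\iota_s\circ z_s$. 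By Lemma~\ref{lem:specialization} the cycle $\sp_s(\Lambda_{X/S})$ represents $\iota_s^![\Lambda_{X/S}]$, so Definition~\ref{defn:degree} together with the commutativity of refined Gysin homomorphisms for this square~\cite{Fulton} yields
\[
 \deg\bigl(\sp_s(\Lambda_{X/S})\bigr) \;=\; \deg_{W_s}\!\bigl(z_s^!\,\iota_s^![\Lambda_{X/S}]\bigr) \;=\; \deg_{W_s}\bigl(j_s^!\beta\bigr).
\]
Finally, $\deg_{W_s}(j_s^!\beta)$ is the intersection number of $\beta$ with the fibre $W_s$; by the compatibility of the Gysin map for the Cartier divisor $s\in S$ with the proper pushforward $f_*$~\cite{Fulton} it equals $\deg\bigl(s^!(f_*\beta)\bigr)$, where $s^!\colon\CH_1(S)\to\CH_0(\Spec k)=\bbZ$ is the Gysin map for $\{s\}\hookrightarrow S$. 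Since $S$ is an irreducible curve we have $\CH_1(S)=\bbZ\cdot[S]$, so $f_*\beta=c\,[S]$ for some integer $c$, whence $s^!(f_*\beta)=c\,[s]$ has degree $c$. Thus $\deg(\sp_s(\Lambda_{X/S}))=c$ for all $s\in S(k)$, and $c$ does not depend on $s$.

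I do not expect a genuine obstacle here: the real content has already been packed into Lemma~\ref{lem:specialization}. The one point that is not purely formal is the cycle-level identification of $\sp_s(\Lambda_{X/S})$ with the Gysin pullback $\iota_s^![\Lambda_{X/S}]$, which uses the flatness of $\Lambda_{X/S}\to S$ to ensure that $T^\vee(W_s)$ meets $\Lambda_{X/S}$ properly, so that the Gysin pullback is the cycle of the scheme-theoretic fibre — but this is exactly what Lemma~\ref{lem:specialization} records, so no new argument is needed. Everything else is bookkeeping: one must check that the various Gysin and degree operations are applied in the correct codimensions, and that degrees of $0$-cycles are only ever taken on spaces proper over $k$ (namely $W_s$ and $\Spec k$).
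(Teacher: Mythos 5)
Your argument is correct and is essentially the paper's: the paper reduces to exactly the same setup (the zero section of $T^\vee(W/S)$ is a regular embedding, $\Lambda_{X/S}$ is flat over $S$, and the relevant intersection with the zero section is proper over $S$) and then cites Fulton's conservation-of-number theorem (th.~10.2 of \emph{Intersection Theory}), whose proof is precisely the push-to-the-curve computation you spell out via the compatibilities of refined Gysin homomorphisms and $\CH_1(S)=\bbZ\cdot[S]$. Your properness caveat is apt --- the paper needs it too (it is automatic in the intended applications, where $W\to S$ is an abelian scheme or a smooth proper family).
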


\begin{proof}
The cycle class of the specialization $\sp_s(\Lambda_{X/S})$ is the image of $[\Lambda_{X/S}]$ under the Gysin map in lemma~\ref{lem:specialization}, and its degree is defined as the intersection number of this image with the zero section 
$
 X=\mathcal{X} \hookrightarrow \mathcal{W} = T^\vee(W/S)
$. As $\mathcal{X}$ and~$\mathcal{W}$ are flat over $S$ and $\mathcal{X} \hookrightarrow \mathcal{W}$ is a regular embedding, the degree is therefore constant by \cite[th.~10.2]{Fulton} applied to the relative conormal variety $\mathcal{V}=\Lambda_{X/S}$. Note that $\mathcal{V}\hookrightarrow \mathcal{W}$ is not required to be a regular embedding; in order to apply loc.~cit.~we only need that its base change to $\mathcal{X}$ is proper over $S$, which is true. 
\end{proof}

\begin{rem}
For $\mathrm{char}(k)>0$ the specialization of a family of conormal varieties need not be a sum of conormal varieties, see~\cite[p.~215]{KleimanTangencyAndDuality}. Similarly, for $\mathrm{char}(k)=0$ we need $\dim(S)=1$, otherwise we would have to restrict the class of morphisms as in~\cite{SabbahSansEclatement}. For instance, for $S=\bbC^2\ni s=(0,0)$ we have $i_s^{-1}(\Lambda_{X/S}) = T^\vee(W/S)|_{X_s}$ for the subvariety 
$
 X =  \{ ((x,y,z), (x^2 - y^2 z, y)) \mid (x,y,z)\in \bbC^3 \} \subset W = \bbC^3 \times S.
$
\end{rem}

\medskip

\section{Jump loci for the degree} \label{sec:jump}

Let $f: W\to S$ be a smooth dominant morphism from a smooth variety to a smooth curve as above. For any $S$-flat subvariety $X\subset W$ and $s\in S(k)$ we have seen that 
$
 \sp_s(\Lambda_{X/S}) - \Lambda_{X_s} \geq 0,
$ 
where the inequality means that the cycle on the left hand side is effective or zero. It is natural to ask for which $s\in S(k)$ the above inequality is strict. In the notation of lemma~\ref{lem:specialization} this happens iff $m_Z > 0$ for some~$Z\subseteq \Sing(X_s)$. The following provides a sufficient criterion for this to happen for families of divisors:

\begin{prop} \label{prop:component-in-sp}
Assume $d=\codim(X, W)=1$. If $\Sing(X/S)$ has an irreducible component which is contained in the fiber over some point $s\in S(k)$, then for this component $Z\subset X_s$ we have
$$
\sp_s(\Lambda_{X/S})-\Lambda_{X_s} \; \geq \; \Lambda_Z.
$$
\end{prop}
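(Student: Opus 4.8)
The plan is to exhibit an explicit point of $\Lambda_Z$ in the support of $\sp_s(\Lambda_{X/S})$ and then argue by genericity that the whole component $\Lambda_Z$ must occur. First I would pass to a generic point $z$ of the component $Z\subset \Sing(X/S)$; since $Z$ has codimension one more than $X$ inside $W$ (as $Z$ is contained in the fiber $X_s$, which is a divisor in the fiber $W_s$, which in turn is a divisor in $W$), a neighbourhood of $z$ in $W$ looks like a product, and by generic smoothness I may shrink $S$ and assume $Z$ is itself smooth and that $X$ has, along $Z$, a well-controlled local equation. The point is that $X$ is a relative divisor, cut out near $z$ by a single equation $g=0$ with $g\in \scrO_W$, and the relative singular locus $\Sing(X/S)$ is, near $z$, the common zero locus of $g$ and its relative differential $d_{W/S}g$; by hypothesis this locus is contained in the fiber $W_s=\{t=0\}$ where $t$ is a local parameter on $S$. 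Hence along $Z$ the vanishing of $d_{W/S}g$ forces a factor of $t$, i.e.\ $d_{W/S}g = t\cdot\omega$ near $z$ for some relative $1$-form $\omega$ that is nonzero at generic points of $Z$ — otherwise $\Sing(X/S)$ would be even larger and still inside the fiber, contradicting that $Z$ is a \emph{component}.

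With such a local model in hand, the key step is to compute the fiber at $s$ of the relative conormal variety $\Lambda_{X/S}$ near a generic $z\in Z$. Over the relative smooth locus $\Sm(X/S)$ the conormal direction to $X_{f(x)}$ at a point $x$ near $Z$ is spanned by $d_{W/S}g(x) = t(x)\cdot\omega(x)$; rescaling by the conic $\bbG_m$-action (which is harmless since $\Lambda_{X/S}$ is conic), the conormal point is $(x,\omega(x))$ for $t(x)\neq 0$. Taking the closure and then intersecting with $t=0$, the limit of these conormal directions as $x\to z$ is $(z,\omega(z))$, and $\omega(z)$ is the conormal direction to $Z$ inside $W_s$ (this is exactly the content of $d_{W/S}g = t\omega$ together with the defining relations for $Z$). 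So a generic point of $\Lambda_Z$ lies in the support of $\sp_s(\Lambda_{X/S})=[\Lambda_{X/S}\cdot f^{-1}(s)]$. Since by lemma~\ref{lem:specialization} this specialization is a sum of conormal varieties with \emph{positive} integer coefficients, and one of the conormal varieties appearing must contain a generic point of $\Lambda_Z$, that component is forced to be $\Lambda_Z$ itself; hence $\sp_s(\Lambda_{X/S}) \geq \Lambda_{X_s} + \Lambda_Z$, which rearranges to the claimed inequality (using that $\Lambda_{X_s}\neq\Lambda_Z$ because $X_s$ has larger dimension than $Z$).

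I expect the main obstacle to be making rigorous the step ``$d_{W/S}g = t\cdot\omega$ with $\omega$ not identically zero on $Z$'' and, relatedly, identifying $\omega(z)$ with the conormal to $Z$ in the fiber. The subtlety is that $Z$ is only assumed to be an irreducible \emph{component} of $\Sing(X/S)$ lying in a single fiber, not a smooth subvariety with nice defining equations; one has to work at the generic point of $Z$ and use that $\mathrm{char}(k)=0$ to get generic smoothness of $Z\to\mathrm{Spec}\,k$ and a suitable étale-local or formal-local model of $W\to S$ near $z$ in which $t$ is part of a coordinate system and $g$ can be analyzed. Care is also needed to check that after passing to this local model the relative conormal variety is still computed by the same formula and that taking closures commutes with the localization; this is where one should invoke, as in the proof of lemma~\ref{lem:specialization}, that $\Lambda_{X/S}\to S$ is flat with equidimensional conic Lagrangian fibers, so the limit cycle cannot drop dimension and the generic point of $\Lambda_Z$ we produced genuinely sits in a top-dimensional component. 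Once the local computation is pinned down, the passage from ``a point of $\Lambda_Z$ appears'' to ``$\Lambda_Z$ appears with positive coefficient'' is immediate from the structure of $\sp_s(\Lambda_{X/S})$ recorded in lemma~\ref{lem:specialization}.
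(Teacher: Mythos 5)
Your overall strategy --- exhibit limits of conormal directions over the generic point of $Z$ inside the support of $\sp_s(\Lambda_{X/S})$ and then invoke the structure of the specialization from lemma~\ref{lem:specialization} --- is the right one and matches the paper's. But the local computation at the heart of your argument fails on two counts. First, the factorization $d_{W/S}g=t\cdot\omega$ does not follow from the hypothesis: the containment $\Sing(X/S)=V(g,\partial_1 g,\dots,\partial_n g)\subseteq\{t=0\}$ says that $t$ lies in the \emph{radical} of the ideal $(g,\partial_1 g,\dots,\partial_n g)$, not that each $\partial_i g$ lies in $(t)$ --- you have the divisibility the wrong way around. Concretely, for $W=\bbA^3_{x,y,z}\times S$, $S=\bbA^1_t$ and $g=x^2+y^2+t$, the relative singular locus is the $z$-axis $Z=\{x=y=t=0\}$ in the fiber over $t=0$, yet $d_{W/S}g=2x\,dx+2y\,dy$ is not divisible by $t$ (not even at the generic point of $Z$), and the limiting conormal direction $[x:y:0]$ genuinely depends on the path of approach; there is no single well-defined $\omega(z)$.

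Second, and independently of the first point, producing one limiting direction $(z,\omega(z))$ over the generic point $z$ of $Z$ would not suffice. Since $Z\subseteq\Sing(X_s)\subsetneq X_s\subsetneq W_s$, the codimension $c$ of $Z$ in $W_s$ is at least $2$, so the fiber of $\P\Lambda_Z$ over $z$ is a $\P^{c-1}$ of positive dimension; a single point of that fiber is not a generic point of $\Lambda_Z$, and it could a priori lie on $\Lambda_{X_s}$ (whose closure over $\Sing(X_s)$ you do not control) or on $\Lambda_{Z'}$ for some $Z'\supsetneq Z$, so it does not force $\Lambda_Z$ to occur as a component. What is actually needed is an $(n-1)$-dimensional family of limits lying over $Z$: since $\P(\Supp(\sp_s(\Lambda_{X/S})))$ is pure of dimension $n-1$ and a union of projectivized conormal varieties, such a family must be a union of components, and the one dominating $Z$ can only be $\P\Lambda_Z$. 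The paper obtains this family as the image of the exceptional divisor of the blow-up of $\Sing(X/S)$ in $X$, using that the ideal of $\Sing(X/S)$ in $X$ is generated by the partial derivatives of $g$, so that the resolved Gauss map $\hat X\to X\times\P V$ is a closed embedding (the Rees algebra is a quotient of $\scrO_X\otimes\Sym^\bullet V^\vee$) and the exceptional divisor therefore contributes a full $(n-1)$-dimensional subvariety over $Z$. This step --- showing that the \emph{entire} $\P^{c-1}$ of conormal directions to $Z$ is swept out by limits, not just one of them --- is the essential content of the proposition and is missing from your argument.
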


We divide the proof in several steps. Most of the argument works in arbitrary codimension $d$, so for the moment we do not yet assume $d=1$. It will be enough to prove the claim over some open dense subset of $W$. Fixing a general point $p\in Z(k)$ and working locally near that point, we can assume that \medskip 
\begin{itemize} 
\item $Z=\Sing(X/S)$ (equality as a scheme), \medskip 
\item $T^\vee(W/S) \simeq W\times V$ is the trivial bundle with fiber $V=T^\vee_p(W_s)$, \medskip 
\item $X\subset W$ is cut out by a regular sequence $f_1, \dots, f_d \in H^0(W,\mathcal{O}_W)$. \medskip 
\end{itemize} 
By the first item each $x\in (X\setminus Z)(k)$ is a smooth point of $X_t$ for $t=f(x)$. Fixing a trivialization as in the second item, we can furthermore identify the conormal space to $X_t\subset W_t$ at $x$ with a subspace in $V= T^\vee_x(W_t)$ of codimension $d$. Consider the relative Gauss map
$$X\setminus Z \;\longrightarrow\; \Gr(d, V )$$
which sends each point to the corresponding conormal space. This is a rational map whose locus of indeterminacy is precisely $Z$. Let $\gamma_X:\hat{X} \to \Gr (d,V)$ denote its resolution of indeterminacy which is obtained by blowing up the base locus~$Z\subset X$ as in~\cite[sect.~4.4]{Fulton}:
\[
\begin{tikzcd}
& \hat{X} \ar[dl, "\pi_X" above left] \ar[dr, "\gamma_X" above right] & \\
X \ar[rr, dashed] && \Gr(d, V)
\end{tikzcd}
\]
We want to control the image of the exceptional divisor $E_X=\pi_X^{-1}(Z) \subset \hat{X}$ under the map
\[
 \alpha_X \;=\; (\pi_X, \gamma_X): \; \hat{X} \,\to\, X\times \Gr(d, V) \,\subset \, W \times \Gr(d, V) 
 \;=\; \Gr (d,T^\vee(W/S)).
\]

\begin{lem} \label{lem:closed-embedding}
The morphism $\alpha_X$ is a closed embedding.
\end{lem}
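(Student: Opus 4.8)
The plan is to factor $\alpha_X$ as the composite of $\pi_X$-type data with a graph embedding and then check the two elementary criteria for a morphism to be a closed embedding: that it is injective on points and injective on tangent spaces (equivalently, a proper monomorphism, since $\hat X$ is projective over $W$ once we restrict to a projective chart). First I would observe that $\alpha_X$ is proper: it is the composite $\hat X \xrightarrow{(\mathrm{id},\gamma_X)} \hat X\times \Gr(d,V) \to X\times\Gr(d,V)$, where the first arrow is a closed embedding (the graph of a morphism to a separated target) and the second is $\pi_X\times\mathrm{id}$, which is proper because $\pi_X$ is a blow-up, hence projective. So it remains to prove that $\alpha_X$ is a locally closed embedding, and since it is already proper, that it is a locally closed embedding onto a closed subscheme; by the standard criterion it suffices to show $\alpha_X$ is a monomorphism, i.e. it is injective on $k$-points and induces injections on Zariski tangent spaces.

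\textbf{Injectivity on points.} Over the open locus $X\setminus Z$ the map $\gamma_X$ is a genuine morphism (the Gauss map), $\pi_X$ is an isomorphism there, and $\alpha_X$ restricts to the graph of the Gauss map, which is certainly injective. So the content is concentrated over the exceptional divisor $E_X=\pi_X^{-1}(Z)$. Here I would use the explicit description of the blow-up of $X$ along $Z$ from \cite[sect.~4.4]{Fulton}: with $X$ cut out by the regular sequence $f_1,\dots,f_d$ and $Z=\Sing(X/S)$ defined (after shrinking $W$) by the ideal generated by the entries of the Jacobian, the blow-up realizes $\hat X$ inside $X\times\P^{N}$ (projectivization of the space spanned by the partials $df_1,\dots,df_d$ and appropriate higher-order data) so that the fibre of $\pi_X$ over a point $p\in Z$ records precisely the limiting conormal directions, and $\gamma_X$ on $E_X$ is the projection to these directions. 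Two distinct points of $E_X$ lying over the same $p$ map to distinct points of $\Gr(d,V)$ by construction of $\hat X$ as (the closure of the graph of) the Gauss map: the second projection separates points of the fibre. Points over distinct base points $p\ne p'$ are already separated by $\pi_X$. This gives injectivity on $k$-points.

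\textbf{Injectivity on tangent spaces.} This is the step I expect to be the main obstacle, since over $E_X$ one must rule out that $\alpha_X$ contracts a tangent direction. Away from $E_X$ it is clear because $\pi_X$ is an isomorphism there. Over $E_X$, I would argue as follows: the differential $d\alpha_X$ at a point $\hat p\in E_X$ is the pair $(d\pi_X, d\gamma_X)$. A tangent vector $v\in T_{\hat p}\hat X$ in the kernel must lie in $\ker d\pi_X = T_{\hat p} E_X \cap T_{\hat p}(\pi_X^{-1}(p))$, i.e. $v$ is tangent to the fibre of the blow-up; and then we need $d\gamma_X(v)=0$ as well, i.e. $v$ is tangent to the fibre of $\gamma_X|_{E_X}$. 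The geometry of the construction — again from \cite[sect.~4.4]{Fulton} — is that $E_X$ is, at least generically and after the local normalizations listed in the proof, embedded in $Z\times\Gr(d,V)$ via $(\pi_X,\gamma_X)$ because the projectivized normal/Jacobian data used to build the blow-up is literally the Grassmannian coordinate; hence the joint map $(\pi_X,\gamma_X)|_{E_X}$ is a locally closed embedding and its differential is injective on $T E_X$. Combined with the fact that $T_{\hat p}\hat X = T_{\hat p}E_X \oplus (\text{a line mapping isomorphically under } d\pi_X \text{ to the normal direction of } Z\subset X)$ at a general point, no nonzero $v$ can be killed. Here is where the hypothesis $d=1$ enters cleanly: for a divisor $X$ cut out by a single $f$, the Jacobian ideal is $(\partial f/\partial x_i)$, $Z$ is its zero locus, and the blow-up of $X$ along $Z$ is exactly the closure of the graph of $p\mapsto [\partial f/\partial x_0:\cdots:\partial f/\partial x_n]\in\P V = \Gr(1,V)$; so $\hat X\hookrightarrow X\times\P V$ tautologically, making $\alpha_X$ the inclusion of this closed subscheme composed with $X\times\P V\hookrightarrow W\times\P V$, which is visibly a closed embedding. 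I would present the general-$d$ discussion as motivation and then give this one-line argument in the case $d=1$ that we actually need, noting that it suffices to check things over the open dense locus where the stated local normalizations hold, and that the locus where $\alpha_X$ fails to be a closed embedding is closed and we have shown it is empty on a dense open, hence empty.
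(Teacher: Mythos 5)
Your endgame for $d=1$ is correct and is in fact the paper's own argument in that case: since the ideal $I\unlhd\mathcal{O}_X$ of $Z$ is generated by the restricted partials $\partial_1(f),\dots,\partial_n(f)$, the blow-up $\hat{X}=\Proj_X\bigl(\bigoplus_{m\geq 0}I^m\bigr)$ receives a surjection from $\mathcal{O}_X\otimes\Sym^\bullet V^\vee$ and hence sits as a closed subscheme of $X\times\P V$, with $\alpha_X$ the inclusion. The gap is that the lemma is stated, and later used, in arbitrary codimension $d$ (it is what makes $Y\to X\times\Fl(d,1,V)$ a closed immersion in general, the input for corollary~\ref{cor:conormal-in-sp} and example~\ref{ex:higher-codim}), and your general-$d$ discussion does not prove it. Two concrete problems there. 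First, for $d>1$ the scheme $Z=\Sing(X/S)$ is cut out on $X$ by the $d\times d$ \emph{minors} of the Jacobian, not by its entries as you write; the crux of the proof is precisely that $I$ is generated by these minors, because they are the Pl\"ucker coordinates of the Gauss map $X\setminus Z\to\Gr(d,V)\hookrightarrow\P(\bigwedge^d V)$, and once this is known the same Rees-algebra surjectivity argument gives the closed immersion $\hat{X}\hookrightarrow X\times\P(\bigwedge^d V)$ in one stroke, with no separate point/tangent-space analysis. Second, your separation-of-points and tangent-space arguments over $E_X$ are circular: you justify them ``by construction of $\hat{X}$ as the closure of the graph of the Gauss map,'' but $\hat{X}$ is constructed as the blow-up of $Z$, and the identification of this blow-up with the closure of the graph is exactly the content of the lemma; it holds because the base ideal of the linear system of minors equals $I$, which is the point that needs proof.

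Separately, the closing reduction ``the locus where $\alpha_X$ fails to be a closed embedding is closed and empty on a dense open, hence empty'' is not a valid principle: the normalization of a nodal curve is a closed embedding over a dense open but not globally. What legitimately allows working over a dense open here is not a property of closed embeddings but the fact that the eventual goal, proposition~\ref{prop:component-in-sp}, only concerns which conormal varieties occur as components of a cycle, so it can be checked near a general point of $Z$; the lemma is then asserted only after the local normalizations listed before it, and under those normalizations the Rees-algebra argument proves it outright, with no residual dense-open caveat.
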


\begin{proof}
By assumption $X\subset W$ is cut out by a regular sequence $f_1, \dots, f_d$. The same then holds for each fiber $X_t \subset W_t$. Hence it follows that the relative singular locus $\Sing(X/S)$ is cut out as a closed subscheme of $W$ by~$f_1, \dots, f_d$ and by the $d\times d$ minors 
\[
 J(i_1, \dots, i_d) \;:=\;
 \det
 \begin{pmatrix}
 \partial_{i_1} (f_1) & \cdots & \partial_{i_1}(f_d) \\
 \vdots & \ddots & \vdots \\
 \partial_{i_d} (f_1) & \cdots & \partial_{i_d}(f_d)
 \end{pmatrix} 
\] 
with $1\leq i_1 < \cdots < i_d \leq n$, where we fix an arbitrary basis $\partial_1, \dots, \partial_n \in V^\vee$ for the fiber of the relative tangent bundle and regard the basis vectors as relative derivations for the smooth morphism $W\to S$. 

\medskip

Now let $\iota:X\times \Gr(d, V)\hookrightarrow X\times \P(\Lambda^d V )$ be the Pl\"ucker embedding of the Grassmannian as a closed subvariety of projective space. We want to show that the composite
$$ \beta_X \;:=\; \iota \circ \alpha_X: \quad \hat{X} \;\longrightarrow\; X \times \Gr(d, V) \longrightarrow X \times \P (\textstyle\bigwedge^d V) 
$$ 
is a closed embedding. For this let $I\unlhd \mathcal{O}_X$ be the ideal sheaf of $Z\subset X$. Then we have \medskip
\[
 \hat{X} \;=\; \Proj_X R_I
 \quad \textnormal{for the graded Rees algebra} \quad 
 R_I \;:=\; \bigoplus_{n\geq 0} \; I^n \cdot t^n \;\subset\; \scrO_X[t],
\]
where $t$ is a dummy variable to keep track of degrees. The homomorphism\medskip
$$\beta_X^* \colon \quad \mathcal{O}_X\otimes \Sym^\bullet \left(\bigwedge^{d}V^{\vee} \right)\;\longrightarrow\;\; R_I \;=\; \bigoplus_{n\geq 0} \; I^n \cdot t^n $$
of graded $\mathcal{O}_X$-algebras satisfies \medskip 
\[
\beta_X^*(1\otimes (\partial_{i_1}\wedge \cdots \wedge \partial_{i_d})) \;=\; 
J(i_1, \dots, i_d)|_X \cdot t
\;\in\; I\cdot t \medskip 
\]
for $1\leq i_1 < \cdots < i_k\leq n$. But we have seen above that the $\mathcal{O}_X$-module $I$ is generated by the minors on the right hand side. Hence it follows that $\beta_X^*$ is an epimorphism in all degrees and so $\beta_X$ is a closed immersion.
\end{proof}

\begin{rem}
If $d=1$, then $\Gr(d, V)=\P V$ and the closed embedding $\alpha_X$ induces an isomorphism
\[
 \alpha_X: \quad \hat{X} \;\stackrel{\sim}{\longrightarrow} \; \P \Lambda_{X/S}. \medskip
\]
Indeed, the blowup $\hat{X}$ is again reduced and irreducible by~\cite[II.7.16]{Hartshorne}. Via $\alpha_X$ it is therefore an integral closed subscheme of $W\times \P V$ and as such it can be recovered as the Zariski closure of its restriction over the open dense subset $S\setminus \{s\}\subset S$, where it coincides with $\P \Lambda_{X/S}\subset W\times \P V$ by definition.
%
\end{rem}

In particular, one may look at the scheme-theoretic fiber of $\hat{X}$ over $s\in S(k)$ to compute the multiplicities in $\sp_s(\Lambda_{X/S})$. For $d>1$ the situation becomes more complicated, so in what follows we restrict ourselves to set-theoretic arguments. To pass back to conormal varieties we look at the projection
$
 \Fl(d, 1, V) \longrightarrow \Gr(d, V)
$
from the partial flag variety. This projection is a smooth equidimensional morphism of relative dimension $d-1$. On the fiber product $Y = \hat{X} \times_{\Gr(d, V)} \Fl(d,1,V)$ consider the morphism
\[
 \alpha_Y = (\pi_Y, \gamma_Y): \quad Y \;\longrightarrow\; X\times \P V
\]
where  $\pi_Y: Y\to \hat{X} \to X$ and $\gamma_Y: Y\to \Fl(d,1,V)\to \P V$ are the natural composite maps.
Taking the preimage of the previous exceptional divisor $E_X\subset \hat{X}$ we get the following lower bound on the specialization:

\begin{lem} \label{lem:upper-bound}
The preimage $E_Y = \pi_Y^{-1}(Z)\subset Y$ has dimension  $\dim(E_Y) = n$ and satisfies
\[ \alpha_Y(E_Y) \;\subset\; \P(\Supp(\sp_s(\Lambda_{X/S}))). \]
\end{lem}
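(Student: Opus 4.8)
The plan is to treat the dimension statement and the inclusion separately. First I would read off the dimension of $E_Y$ from the bundle structure of $Y\to\hat X$; then I would obtain the inclusion by identifying the image of $\alpha_Y$ with the projectivized relative conormal variety and restricting to the fibre over $s$.

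For the dimension, note that $Z$ is nonempty, being an irreducible component of $\Sing(X/S)$, so the exceptional divisor $E_X=\pi_X^{-1}(Z)$ is a nonempty effective Cartier divisor in the irreducible blowup $\hat X=\Proj_X R_I$ of dimension $\dim X=n+1-d$; hence $E_X$ is of pure dimension $n-d$. Since $Y\to\hat X$ is the base change of $\Fl(d,1,V)\to\Gr(d,V)$ along $\gamma_X$, it is a $\P^{d-1}$-bundle, and $E_Y=\pi_Y^{-1}(Z)$ is exactly its restriction over $E_X$. Thus $Y$ has dimension $(n+1-d)+(d-1)=n$, and $E_Y$ is a nonempty prime divisor in it, of dimension $(n-d)+(d-1)=n-1$; in particular $\dim E_Y=\dim Y-1$.

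For the inclusion, the key step is to identify $\alpha_Y(Y)=\P\Lambda_{X/S}$ inside $W\times\P V=\P(T^\vee(W/S))$. Over the relative smooth locus $\Sm(X/S)$ the blowup is an isomorphism and $\gamma_X(x)$ is the conormal space to $X_{f(x)}$ at $x$, so $\alpha_Y$ sends a pair $(x,L)$ with $L\in\P(\gamma_X(x))$ to $(x,[L])$; its image there is the bundle of conormal lines, which is $\P\Lambda_{X/S}$ over $\Sm(X/S)$ by the definition of $\Lambda_{X/S}$. As $Y$ is irreducible (using lemma~\ref{rem:flat}) and $\alpha_Y$ is proper, taking closures gives $\alpha_Y(Y)=\P\Lambda_{X/S}$; for $d=1$ this is just the identification $\hat X\cong\P\Lambda_{X/S}$. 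Now $\pi_Y(E_Y)\subset Z\subset X_s=f^{-1}(s)\cap X$, so $\alpha_Y(E_Y)$ lies in the fibre over $s$, that is in $(\P\Lambda_{X/S})_s$. Since $\sp_s(\Lambda_{X/S})$ is by definition the cycle underlying the schematic fibre of $\Lambda_{X/S}\to S$ over $s$, its support is precisely the set-theoretic fibre, whence $\P(\Supp(\sp_s(\Lambda_{X/S})))=(\P\Lambda_{X/S})_s$. Combining the two inclusions yields $\alpha_Y(E_Y)\subset\P(\Supp(\sp_s(\Lambda_{X/S})))$, as desired.

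The main obstacle is the identification $\alpha_Y(Y)=\P\Lambda_{X/S}$: one must check that the flag-resolved Gauss map produces exactly the projectivized relative conormal variety, with no spurious components. I would justify this using lemma~\ref{lem:closed-embedding} together with the observation that both varieties are irreducible of dimension $n$ and agree over the dense open locus $\Sm(X/S)$. The remaining compatibility, that passing to the fibre over $s$ matches the specialization cycle of section~\ref{sec:specialization}, is then immediate from the definition of $\sp_s$ as the cycle of the schematic fibre and the flatness of $\Lambda_{X/S}\to S$.
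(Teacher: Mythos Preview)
Your argument follows the same route as the paper. For the dimension you observe that $E_Y$ is a divisor in the $n$-dimensional variety $Y$, hence $\dim E_Y=n-1$; this is exactly what the paper's proof shows as well, and it is $n-1$ that is actually used in corollary~\ref{cor:conormal-in-sp}, so the ``$=n$'' in the displayed statement is a typo. For the inclusion you identify $\alpha_Y(Y)$ with $\P\Lambda_{X/S}$ by matching them over the relative smooth locus and passing to closures; the paper does the same thing, phrased via the open piece $Y^*=Y|_{S^*}\simeq\P\Lambda_{X/S}|_{S^*}$ and the fact that $Y=\overline{Y^*}$.

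Two minor corrections that do not affect the argument: the irreducibility of $Y$ does not come from lemma~\ref{rem:flat} (which is about $\Lambda_{X/S}$) but from the fact that $\hat X$ is irreducible as the blowup of an integral scheme and $Y\to\hat X$ is a $\P^{d-1}$-bundle; and the exceptional divisor $E_X$, hence $E_Y$, need not be irreducible, so ``prime divisor'' should simply read ``Cartier divisor''.
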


\begin{proof}
The statement about the dimension holds because $\dim(Y)=n=\dim(V)$ and since the subvariety $E_Y\subset Y$ is a divisor, being the preimage of the exceptional divisor $E_X\subset \hat{X}$ under the fibration $Y\to \hat{X}$. To understand why the image $\gamma_Y(E_Y)$ is contained in the specialization, recall that $f: X\to S$ is smooth over the open subset $S^* = S\setminus \{s\}$. The identifications
\[
 Y^* \;:=\; S^* \times_S Y \;\simeq \; S^*\times_S \P \Lambda_{X/S} \;\subset\; S^* \times \P T^\vee(W/S)
\]
give the following Cartesian diagram where the vertical arrows are open embeddings and the top horizontal arrow is a closed immersion:
\[
\begin{tikzcd} 
Y^* \ar[r,hook] \ar[d, hook] & S^* \times_S \P T^\vee(W/S)\ar[d, hook] \\
Y \ar[r] & \P T^\vee(W/S)
\end{tikzcd}
\]
Now $\hat{X}$ is irreducible as a blowup of an irreducible variety. So $Y$ is irreducible as well, hence equal to the Zariski closure of its nonempty open subset $Y^* \subset Y$. But then  
\begin{align*}
 \bbP(\sp_s(\Lambda_{X/S})) &\;=\;
 \bigl( \textnormal{closure of} \;
 S^*\times_S \bbP \Lambda_{X/S} \;
 \textnormal{in} \;
 \P T^\vee(W/S) \bigr)_s \\
 &\;=\; \bigl( \textnormal{closure of the image of $Y^*$ in $\P T^\vee(W/S)$} \bigr)_s  
  \;\supseteq\; \alpha_Y(Y_s)
\end{align*} 
and we are done because by construction we have $E_Y \subseteq Y_s$. 
\end{proof}

\begin{cor} \label{cor:conormal-in-sp}
If $\alpha_Y: E_Y \to X\times \P V$ is generically finite onto its image, then we have
\[ \Lambda_Z \; \subset \; \Supp(\sp_s(\Lambda_{X/S})). \]
\end{cor}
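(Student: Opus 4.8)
The plan is to carry out a dimension count inside the projectivized relative cotangent bundle, matching an irreducible component of $\alpha_Y(E_Y)$ with $\P\Lambda_Z$. First I would unwind the target: by Lemma~\ref{lem:specialization} the cycle $\sp_s(\Lambda_{X/S})$ is conic and Lagrangian, so by~\cite[lemma~3]{Kennedy} its support is a finite union $\Lambda_{X_s}\cup\bigcup_j\Lambda_{W_j}$ of conormal varieties attached to subvarieties $W_j\subseteq\Sing(X_s)$, each of pure dimension $n=\dim(W_s)$. Projectivizing the cotangent fibres, $\P(\Supp(\sp_s(\Lambda_{X/S})))$ becomes a finite union of irreducible subvarieties of $\P T^\vee(W_s)$ of dimension $n-1$, on each of which the bundle projection $\P T^\vee(W_s)\to W_s$ restricts to a surjection $\P\Lambda_W\twoheadrightarrow W$ onto its base.

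Next I would fix the dimensions and choose the component of $E_Y$ that I want. Since $\hat X$ is the blow-up of the irreducible variety $X$ (of dimension $n+1-d$) along the proper closed subscheme $Z$, the exceptional divisor $E_X$ is Cartier in $\hat X$, hence pure of dimension $n-d$; as $E_Y\to E_X$ is a $\P^{d-1}$-bundle, $E_Y$ is pure of dimension $n-1$. The projection $\pi_Y\colon E_Y\to Z$ is proper and surjective (the exceptional divisor surjects onto the centre of the blow-up and $Y\to\hat X$ is surjective), and $Z$ is irreducible, so I may choose an irreducible component $E\subseteq E_Y$ with $\pi_Y(E)=Z$. By hypothesis $\alpha_Y$ is generically finite onto its image, and hence so is its restriction to $E$, so that $\dim\alpha_Y(E)=\dim E=n-1$; and by Lemma~\ref{lem:upper-bound} we have $\alpha_Y(E)\subseteq\alpha_Y(E_Y)\subseteq\P(\Supp(\sp_s(\Lambda_{X/S})))$.

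It then remains to use the elementary observation that an irreducible variety of dimension $n-1$ contained in a finite union of irreducible varieties of dimension $n-1$ must coincide with one of them. This forces $\alpha_Y(E)=\P\Lambda_W$ for one of the subvarieties $W$ occurring in $\sp_s(\Lambda_{X/S})$. Composing with the projection to $W_s$, the left-hand side maps onto $\pi_Y(E)=Z$ while the right-hand side maps onto $W$; hence $W=Z$, so $\Lambda_Z=\Lambda_W$ is one of the components of the effective cycle $\sp_s(\Lambda_{X/S})$, i.e.\ $\Lambda_Z\subseteq\Supp(\sp_s(\Lambda_{X/S}))$.

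The main obstacle I expect is making this dimension bookkeeping airtight: one must be certain that $E_Y$ (equivalently $\P\Lambda_{X/S}$) has the expected dimension $n-1$ and that every component of $\P(\Supp(\sp_s(\Lambda_{X/S})))$ has dimension exactly $n-1$, so that containment of equidimensional irreducibles yields equality — both facts rest on conormal varieties being equidimensional of dimension equal to the ambient fibre dimension. A smaller delicate point is that the generic-finiteness hypothesis has to be applied to a component of $E_Y$ dominating $Z$; this is automatic when $E_Y$ is irreducible — which is the case of interest, e.g.\ for $d=1$ — and otherwise the hypothesis should be read component-wise.
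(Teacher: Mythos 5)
Your proof is correct and follows essentially the same route as the paper: use Lemma~\ref{lem:upper-bound} plus generic finiteness to see that the $(n-1)$-dimensional image of (a component of) $E_Y$ must be one of the $(n-1)$-dimensional components $\P\Lambda_W$ of $\P(\Supp(\sp_s(\Lambda_{X/S})))$, and then identify $W=Z$ by projecting to the base. Your extra care in selecting an irreducible component of $E_Y$ dominating $Z$ (and in recomputing $\dim E_Y=n-1$, correcting the dimension stated in Lemma~\ref{lem:upper-bound}) only tightens the argument the paper gives more tersely.
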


\begin{proof}
Each irreducible component of $\P (\Supp(\sp_s(\Lambda_{X/S})))$ is the projectivization of some conormal variety. Each of them has dimension $n-1$, so lemma~\ref{lem:upper-bound} and our generic finiteness assumption imply that $\alpha_Y(E_Y)$ must appear as one of the components. But then this component is $\Lambda_Z$ because it maps onto $Z\subset X$.
\end{proof}

\medskip

Note that by lemma~\ref{lem:closed-embedding} the morphism $Y\to X\times \Fl(d,1,V)$ is a closed immersion and hence generically finite onto its image. So corollary~\ref{cor:conormal-in-sp} finishes the proof of proposition~\ref{prop:component-in-sp} since for codimension $d=1$ the morphism $\Fl(d,1,V)\to \P V$ is an isomorphism. This is the only point where we use $d=1$. For higher codimension the morphism $\alpha_Y: E_Y\to X\times \P V$ is not always a closed embedding, as the following example shows, but it may still be generically finite onto its image as needed for corollary~\ref{cor:conormal-in-sp}:

\begin{ex} \label{ex:higher-codim}
Let $W=\Spec \, k[x,y,z,s] \to S = \Spec \, k[s]$, and consider the family of subvarieties
\[
 X \;=\; \{ f = g = 0 \} \subset W
 \quad \textnormal{for} \quad 
 \begin{cases} 
 f \;=\; x^2 + y^2 + s, \\
 g \;=\; x^2 + z^2 - s.
 \end{cases} 
\]
Here $Z=\Sing(X/S)\subset X$ is a fat point with ideal sheaf $I=(xy,xz,yz)\unlhd \scrO_X$ and looking at the minors of the Jacobian matrix we see that the relative Gauss map is given in Pl\"ucker coordinates on the Grassmannian $\Gr(2, V)=\Proj\, k[w_1, w_2, w_3]$ by
\[
 \gamma_X: \quad X\setminus Z \;\longrightarrow\; \Gr(2, V) = \P^2,
 \quad (x,y,z,s) \;\mapsto\; [w_1:w_2:w_3] = [yz:xz:-xy].
\]
Note that the right hand side does not involve the parameter $s$. Furthermore, we have $(2x^2 + y^2 + z^2)|_X = (f + g)|_X = 0$ and hence the relative Gauss map $\gamma_X$ factors over
\[
   Q_X \;=\; \{ 2w_2^2w_3^2  + w_1^2w_3^2 + w_2^2w_3^2 \;=\; 0 \} \;\subset\; \Gr(2, V). \medskip
\] 
Write $\P V = \Proj \; k[v_1, v_2, v_3]$ for the dual coordinates $v_i$ where the flag variety is given by
\[
 \Fl(2,1,V) \;=\; \{ v_1w_1 + v_2w_2 + v_3w_3 = 0 \} \;\subset\; \Gr(2, V)\times \P V
\]
then for 
\[
 Q_Y \;=\; \{ 2w_2^2w_3^2  + w_1^2w_3^2 + w_2^2w_3^2 = v_1w_1 + v_2w_2 + v_3w_3 = 0 \} \;\subset\; \Gr(2,V) \times \P V.
\]
we get the following diagram where the squares are Cartesian and the hooked arrows are closed immersions:
\[
\begin{tikzcd}
 E_Y \ar[r, hook] \ar[d] & Y \ar[r, hook] \ar[d] & X\times Q_Y \ar[r, hook] \ar[d] & X \times \Fl(2,1,V) \ar[r] \ar[d] & X\times \P V \\
  E_X \ar[r, hook] & \hat{X} \ar[r, hook] & X\times Q_X \ar[r, hook] & X \times \Gr(2,V) &
\end{tikzcd}
\]
The composite of the arrows in the top row is the morphism $\alpha_Y: E_Y \to X\times \P V$ that we are interested in. The diagram shows that it is not a closed immersion, over $Z^\mathrm{red} = \{0\} \subset X$ we have the factorization
\[ 
\begin{tikzcd}
E_Y^\mathrm{red} \arrow[rr, "\alpha_Y"] \arrow[dr] & &  \P V \\
&  Q_Y\arrow[ur, swap, "4:1"] &
\end{tikzcd}
\]
where $Q_Y \to \P V$ is an irreducible cover of generic degree four! However, since the left diagonal arrow is a closed immersion and hence birational for dimension reasons, the morphism $\alpha_Y: E_Y \to X\times \P V$ is generically finite over its image.
\end{ex}

\medskip 

\section{Generalities about families of rational maps}\label{sec:gen}

Before we apply the above to Gauss maps, let us recall some generalities about families of rational maps. Let $f\colon X\to S$ be a faithfully flat morphism of varieties of relative dimension $n$ with irreducible fibers. Let $\scrL\in \Pic(X)$ be a line bundle and $\scrV$ a rank $n+1$ vector subbundle of $ f_*\scrL$. Then for each point $s\in S(k)$ we get a linear series~$\scrV_s \subset H^0(X_s, \scrL_s)$ and we denote by $\phi_s\colon X_s \dashrightarrow \mathbb{P}\scrV_s$ the corresponding rational map. Note that since the source and the target of this map have the same dimension, the map is a generically finite cover iff it is dominant. So we consider the degree map 
\[ \deg: \quad S(k) \;\longrightarrow\; \bbN_0, \quad  s\;\mapsto \;\deg(\phi_s), 
\]
where we put $\deg(\phi_s)=0$ if $\phi_s$ is not dominant.

\begin{lem}\label{lem:const}
The degree map $\deg$ is constructible.
\end{lem}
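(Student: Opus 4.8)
The goal is to show that the function $s \mapsto \deg(\phi_s)$ on $S(k)$ is constructible, i.e.\ takes finitely many values and each level set is constructible. The plan is to reduce this to a statement about the dimension of fibers of a single morphism of schemes, and then invoke Chevalley's theorem on constructibility of images together with semicontinuity of fiber dimension.

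\emph{Step 1: Spread out the rational maps into an honest morphism.} First I would construct a universal version of the maps $\phi_s$. The subbundle $\scrV \subseteq f_*\scrL$ gives a relative rational map $\phi\colon X \dashrightarrow \bbP\scrV$ over $S$, defined on the open subset $U\subseteq X$ where the sections in $\scrV$ do not all vanish; let $Z = X\setminus U$ be the relative base locus. Restricting to $U$ we get an honest morphism $\phi|_U\colon U \to \bbP\scrV$ over $S$, and hence a morphism $\Gamma\colon U \to \bbP\scrV$ of $S$-schemes whose fiber over $s$ restricts to $\phi_s|_{U_s}\colon U_s \to \bbP\scrV_s$. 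Taking the scheme-theoretic image $W := \overline{\Gamma(U)} \subseteq \bbP\scrV$ (with its reduced structure), we obtain a dominant $S$-morphism $U \to W$.

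\emph{Step 2: Stratify $S$ so that the generic degree is well-behaved on each stratum.} For the degree of $\phi_s$ to be the degree of the dominant generically finite cover $\phi_s\colon X_s \dashrightarrow \overline{\phi_s(X_s)}$, I need to control when $\phi_s$ is dominant and, when it is, the degree of the induced extension of function fields. Both are governed by the dimensions of the fibers of $\Gamma\colon U \to W$. Concretely: $\phi_s$ is dominant iff $\dim W_s = n$ (where $W_s$ is the fiber of $W\to S$), in which case $\deg(\phi_s)$ is the number of points in a general fiber of $U_s \to W_s$. By Chevalley's theorem on upper semicontinuity of fiber dimension applied to $\Gamma\colon U\to W$ and to $W\to S$, and by generic flatness, $S$ decomposes into finitely many locally closed subsets over each of which (after further stratifying and possibly passing to finite covers, or just by working with generic points) the relevant dimensions and generic fiber cardinalities are constant. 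On each such stratum $\deg(\phi_s)$ is constant; since there are finitely many strata, the degree takes finitely many values and each $\{s : \deg(\phi_s) = d\}$ is a finite union of locally closed sets, hence constructible.

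\emph{Step 3: Identify the generic fiber cardinality with the degree of the cover.} The one point needing care is matching the scheme-theoretic data with the classical notion $\deg(\phi_s)$. Over the generic point $\eta_T$ of a stratum $T$, the morphism $U_{\eta_T}\to W_{\eta_T}$ is, after restricting to a dense open, finite flat of some rank $r$; then for $s\in T$ in a dense open the fiber has exactly $r$ geometric points in general position, and $r = \deg(\phi_s)$ since $\phi_s$ and $\Gamma_s$ agree on the dense open $U_s$. One must also handle the locus where $\phi_s$ fails to be dominant (assign degree $0$), which is exactly $\{s : \dim W_s < n\}$, a constructible set by Chevalley applied to $W\to S$. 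The main obstacle is the usual subtlety that generic flatness and the theorem on the dimension of fibers only give the desired constancy of $\deg(\phi_s)$ on a dense open of each stratum, so one must iterate the stratification by Noetherian induction on $S$ — but this is routine once the dictionary between fiber dimensions/cardinalities and $\deg(\phi_s)$ is set up. No properness of $f$ is needed, only faithful flatness with irreducible fibers, which guarantees that "dominant" is equivalent to "image has dimension $n$" fiberwise.
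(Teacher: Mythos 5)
Your argument is correct, but it takes a genuinely different route from the paper. The paper's proof is intersection-theoretic: it invokes \cite[prop.~4.4]{Fulton} to write $\deg(\phi_s)=\int_{X_s}c_1(\scrL_s)^n-\int_{B_s}c(\scrL_s)^n\cap s(B_s,X_s)$, where $B_s$ is the base locus, and then observes that over each stratum of the flattening stratification of the relative base locus this intersection number is constant; constructibility is immediate. You instead work scheme-theoretically with the graph of the relative rational map on the complement $U$ of the base locus, and combine Chevalley, upper semicontinuity of fiber dimension, generic flatness and Noetherian induction to show the degree is constant on a dense open of each irreducible stratum. Your approach is more elementary (no Segre classes) but requires the extra bookkeeping you describe in Step~3; the paper's approach packages exactly that bookkeeping into a single closed formula whose only input is the base locus as a scheme. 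One point to watch in your write-up: the criterion ``$\phi_s$ is dominant iff $\dim W_s=n$'' is not literally correct for the fiber $W_s$ of the total image $W=\overline{\Gamma(U)}$, since $W_s$ may strictly contain $\overline{\phi_s(U_s)}$ (it picks up limits from nearby fibers); dominance of $\phi_s$ is governed by $\dim\overline{\phi_s(U_s)}$, which agrees with $\dim W_s$ only over a dense open of each irreducible stratum. You acknowledge this and the Noetherian induction you invoke does repair it, but the statement should be phrased as holding generically on each stratum rather than pointwise.
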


\begin{proof}
By~\cite[prop.~4.4]{Fulton} we can compute the degree in terms of Segre and Chern classes as
$$
\deg(\phi_s) \;=\; \int_{X_s}c_1(\scrL_s)^n-\int_{B_s}c(\scrL_s)^n\cap s(B_s,X_s)
$$
where $B_s\subset X_s$ denotes the base locus of the linear series $\mathcal{V}_s\subset H^0(X_s, \scrL_s)$. We can put together all these fiberwise base loci into a relative base locus and consider the flattening stratification of this relative base locus. This is a stratification of~$S$ such that on each stratum the above intersection number is constant, hence the function $\deg$ is constructible. 
\end{proof}

However, in general the degree is neither upper nor lower semi-continuous, as the following variation of \cite[ex.~2.3]{CGS} shows:

\begin{ex} \label{ex:not-semicontinuous}
Let $S$ be a smooth affine curve with two marked points $s_\pm \in S(k)$ 
and fix positive integers $n_\pm \leq n < 27$. 
Let $p_j: S\longrightarrow \P^3$ for $j=1,\dots, n$ be such that 
\begin{itemize} 
\item for $t\neq s_\pm$ the points $p_j(t)$ are in general position, \smallskip 
\item for $t=s_\pm$ they consist of $n_\pm$ general points on a given line $\ell$ and~$n-n_\pm$ points in general position not on that line. \smallskip 
\end{itemize} 
For $t\in S(k)$ let $f_t\colon \mathbb{P}^3\dashrightarrow \mathbb{P}^3$ be the generically finite rational map defined by a linear system of four generic cubics passing trough the $p_j(t)$. By loc.~cit.~its degree is
\[
 \deg(f_t) \;=\; 
 \begin{cases} 
 27 - n & \textnormal{for $t\neq s_\pm$}, \\
 20 - (n-n_\pm) & \textnormal{for $t=t_\pm$ and $n_\pm \geq 4$},
 \end{cases} 
\] 
since in the second case the indeterminacy locus of~$f_\pm$ consists of the chosen line $\ell$ together with the remaining $n-n_\pm$ points. So the degree is a constructible function as predicted by lemma \ref{lem:const}. However, taking for example $(n,n_+,n_-)=(20,10,4)$ we obtain an example where the generic value of the degree is seven, jumps up to ten at one point and down to four at another point. Hence in the same family the degree can both decrease and increase under specialization.
\end{ex}

\medskip

\section{Gauss maps on abelian varieties} \label{sec:abvar}

We now apply the above to an abelian scheme $f: A\to S$, so in this section we take $W=A$. Let $X\subset A$ be a closed subvariety which is flat over $S$. For $s\in S(k)$ we have the Gauss map
\[
 \gamma_{X_s}: \quad  \bbP \Lambda_{X_s} \;\longrightarrow \;\bbP V 
\]
where $V=H^0(A_s, \Omega_{A_s}^1)$. If this map is dominant, then for dimension reasons it is a generically finite cover and we denote by $\deg(\gamma_{X_s})$ its generic degree. If the map is not dominant we put $\deg(\gamma_{X_s})=0$; this happens iff the subvariety $X_s\subset A_s$ arises by pull-back from some smaller dimensional abelian quotient variety~\cite[th.~1]{WeissauerDegenerate}, which by~\cite[th.~3]{Abramovich} happens iff $X_s$ is not of general type. The degree of the above Gauss map is related to the degree of conormal varieties as follows:

\begin{lem}\label{lem:degreGauss}
We have $\deg(\Lambda_{X_s})=\deg(\gamma_{X_s})$.
\end{lem}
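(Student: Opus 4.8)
The plan is to reduce the identity $\deg(\Lambda_{X_s}) = \deg(\gamma_{X_s})$ to a statement purely about the conormal variety $\Lambda_{X_s} \subset T^\vee(A_s)$ and the structure of the cotangent bundle of an abelian variety. Since the polarization and the choice of $s$ play no role, I would drop the subscript and work with a single abelian variety $A$ over $k$, a subvariety $X \subset A$ of pure dimension, its conormal variety $\Lambda_X \subset T^\vee A = A \times V$ with $V = H^0(A, \Omega^1_A)$, and the Gauss map $\gamma_X \colon \bbP\Lambda_X \dashrightarrow \bbP V$ induced by the second projection. The key point is that $\deg(\Lambda_X)$ is, by definition~\ref{defn:degree}, the intersection number of the class $[\Lambda_X] \in \CH^n(T^\vee A)$ with the zero section $i \colon A \hookrightarrow T^\vee A$, whereas $\deg(\gamma_X)$ is the number of points in a generic fiber of $\gamma_X$ — i.e., the intersection number of $[\Lambda_X]$ (or rather its projectivization) with a generic fiber of the projection $A \times V \to V$, restricted to the conic locus.

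The main step I would carry out is to identify these two intersection numbers via the geometry of $T^\vee A = A \times V$. Because the cotangent bundle is trivial, the zero section $i(A) = A \times \{0\}$ and a generic fiber $A \times \{\xi\}$ of $\mathrm{pr}_V \colon A \times V \to V$ are algebraically equivalent (translate by $\xi$ in the vector space direction), hence have the same class in $\CH^n(A \times V)$. Therefore $\deg(\Lambda_X) = [\Lambda_X] \cdot [A \times \{0\}] = [\Lambda_X] \cdot [A \times \{\xi\}]$ for generic $\xi \in V$. Now $[\Lambda_X] \cdot [A \times \{\xi\}]$ counts (with multiplicity) the points of $\Lambda_X$ lying over a generic covector $\xi$, and for generic $\xi$ these all lie in the open locus where $\Lambda_X$ is the honest conormal bundle over $\Sm(X)$ and where $\gamma_X$ is defined, étale, and transverse — so each such intersection point is reduced and corresponds bijectively to a point of $\gamma_X^{-1}([\xi])$. (The scaling action of $\bbG_m$ on the fibers is harmless: $\Lambda_X$ is conic, so choosing $\xi$ versus $\lambda\xi$ gives the same count, and this matches passing to $\bbP V$.) Hence $[\Lambda_X]\cdot[A\times\{\xi\}] = \deg(\gamma_X)$, and combining with the previous equality gives $\deg(\Lambda_X) = \deg(\gamma_X)$. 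In the non-dominant case both sides are $0$: $\gamma_X$ then has image of dimension $< n-1$ in $\bbP V$ so its generic fiber is empty by convention, and correspondingly $\Lambda_X$ misses the generic fiber $A \times \{\xi\}$, forcing $[\Lambda_X]\cdot[A\times\{0\}] = 0$.

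I expect the main obstacle to be the bookkeeping in the generic-transversality argument: one must check that for generic $\xi$ the scheme-theoretic intersection $\Lambda_X \cap (A \times \{\xi\})$ is reduced, supported in the smooth conormal locus, and in bijection with $\gamma_X^{-1}([\xi])$ after projectivizing — in particular that no contribution is lost or doubled over the indeterminacy locus $\Sing(X)$ or from the $\bbG_m$-action. This is a standard generic-smoothness argument in characteristic zero (the image of $\gamma_X$ has a dense open over which $\gamma_X$ is étale, and a general $\xi$ avoids the discriminant as well as the projection of the "bad" part of $\Lambda_X$, which has strictly smaller dimension), but it is the part that needs care. The algebraic-equivalence step and the definitional unwinding are routine. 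One could alternatively invoke~\cite[prop.~4.4]{Fulton} as in lemma~\ref{lem:const} to express $\deg(\gamma_X)$ via Segre classes and match it against definition~\ref{defn:degree} directly, but the triviality of $T^\vee A$ makes the hands-on comparison above cleaner.
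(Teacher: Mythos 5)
Your proposal is correct, but it takes a genuinely different route from the paper. The paper's proof is essentially a two-line reduction: it identifies the map $\gamma_{X_s}\colon \bbP\Lambda_{X_s}\to \bbP V$ with the map $p\circ q$ studied by Franecki and Kapranov (up to the duality $\Gr(d,V)\simeq \Gr(g-d,V^\vee)$, their $\tilde X_s$ being an open dense subset of $\bbP\Lambda_{X_s}$) and then quotes \cite[prop.~2.2]{FKGauss}, which is exactly the statement that this degree equals the intersection number of $\Lambda_{X_s}$ with the zero section. What you have written is, in effect, a direct proof of that cited proposition: you exploit the triviality $T^\vee A = A\times V$ to replace the zero section $A\times\{0\}$ by a generic constant section $A\times\{\xi\}$, and then count the (transverse, reduced) intersection points as the fiber of the Gauss map over $[\xi]$. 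All the steps you flag as needing care do go through: the deformation $t\mapsto A\times\{t\xi\}$ over $\bbA^1$ has proper intersection with $\Lambda_X$ over the base, so conservation of number (or, more simply, the observation that $i^*$ and $j_\xi^*$ are both inverse to the flat pullback isomorphism $\mathrm{pr}_A^*\colon \CH^n(A)\to \CH^n(A\times V)$) justifies the equality of the two intersection numbers; the locus of $\Lambda_X$ not lying over $\Sm(X)$ has dimension $<n=\dim V$, so a generic $\xi$ avoids its image; generic smoothness in characteristic zero gives reducedness of the fiber; and conicity makes the $\bbG_m$-quotient identification with the fiber of $\gamma_X$ over $[\xi]$ a bijection. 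The non-dominant case is handled correctly on both sides. The trade-off is the usual one: the paper's proof is shorter and delegates the transversality bookkeeping to the literature, while yours is self-contained and makes visible exactly where the triviality of the cotangent bundle of an abelian variety enters. The only point I would tighten is the phrase "algebraically equivalent, hence have the same class": what you actually need (and what your deformation argument delivers) is equality of the two intersection numbers, which follows from conservation of number over $\bbA^1$ or from the Gysin-map identity above, not from algebraic equivalence per se.
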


\begin{proof}
The degree of our Gauss map $\gamma_{X_s}: \bbP \Lambda_{X_s} \to \bbP V $ coincides with the degree considered by Franecki and Kapranov terms of tangent rather than cotangent spaces in~\cite[sect.~2]{FKGauss}: Up to the duality $\Gr(d, V)\simeq \Gr(g-d, V^\vee)$ they study the map $p\circ q$ defined by the  diagram
\[
\begin{tikzcd}
 \tilde{X}_s \ar[d] \ar[r, "q"] & F(d,1, V) \ar[d] \ar[r, "p"] & G(1, V)=\bbP V \\
 \Sm(X_s) \ar[r] & G(d, V) &
\end{tikzcd}
\]
where $\tilde{X}_s = \Sm(X_s) \times_{G(d,V)} F(d,g-1, V)$. By construction $\tilde{X}_s \subset \bbP \Lambda_{X_s}$ is an open subset of the conormal variety, and their map $p\circ q$ is the restriction of our Gauss map to this open subset. Hence the claim follows from~\cite[prop.~2.2]{FKGauss}. 
\end{proof}

In particular $\deg(\Lambda_{X_s}) \geq 0$. Together with the preservation of the total degree under Lagrangian specialization this leads to our first semicontinuity result:

\begin{cor}\label{cor:semicon}
The map $S(k) \to \bbN_0, s\mapsto \deg(\gamma_{X_s})$ is lower semicontinuous.
\end{cor}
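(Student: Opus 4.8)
The plan is to reduce the statement to a one-parameter situation and then apply the principle of Lagrangian specialization (Lemma \ref{lem:specialization}) together with the constancy of the total degree (Proposition \ref{prop:degree-constant}) and the positivity of degrees of conormal cycles in abelian varieties (Lemma \ref{lem:degreGauss}, which gives $\deg(\Lambda_{X_s})=\deg(\gamma_{X_s})\geq 0$). Since lower semicontinuity is a statement that can be checked on curves — a constructible function on a variety is lower semicontinuous iff its restriction to every irreducible curve is — and the degree map is already known to be constructible by Lemma \ref{lem:const}, it suffices to show: for every irreducible curve $S'\subseteq S$ and every $0\in S'(k)$, we have $\deg(\gamma_{X_0})\leq \deg(\gamma_{X_s})$ for all $s$ in a Zariski neighborhood of $0$ in $S'$. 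Replacing $S'$ by its normalization (which does not change the fibers $X_s$ up to the finite base change, and the degree of the Gauss map is insensitive to this), we may assume $S'$ is a smooth curve, and we may shrink it so that $A\to S'$ is still an abelian scheme and $X\subset A$ is still flat over $S'$.

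Next I would set up the Lagrangian picture. Over the smooth curve $S'$, form the relative conormal variety $\Lambda_{X/S'}\subset T^\vee(A/S')$; by Lemma \ref{rem:flat} it is flat over $S'$ (at least after restricting to the flat locus, which is all of $S'$ here). Proposition \ref{prop:degree-constant} tells us that the integer $d:=\deg(\sp_s(\Lambda_{X/S'}))$ is independent of $s\in S'(k)$. For general $s$ the specialization equals the honest conormal variety $\Lambda_{X_s}$ by Lemma \ref{lem:specialization}, so $d=\deg(\Lambda_{X_s})=\deg(\gamma_{X_s})$ for general $s$. On the other hand, at the special point $0$ Lemma \ref{lem:specialization} gives
\[
 \sp_0(\Lambda_{X/S'}) \;=\; m_{X_0}\,\Lambda_{X_0} \;+\; \sum_{Z\subset\Sing(X_0)} m_Z\,\Lambda_Z
\]
with $m_{X_0}\geq 1$ and all $m_Z\geq 0$. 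Applying the degree homomorphism and using that $\deg$ is additive on cycles, we obtain
\[
 d \;=\; m_{X_0}\deg(\Lambda_{X_0}) \;+\; \sum_Z m_Z\,\deg(\Lambda_Z).
\]
Now Lemma \ref{lem:degreGauss} (via Proposition \ref{prop:degree-positive}) gives $\deg(\Lambda_Z)\geq 0$ for every subvariety $Z\subset A_0$, and likewise $\deg(\Lambda_{X_0})\geq 0$; since $m_{X_0}\geq 1$ this yields $d\geq \deg(\Lambda_{X_0})=\deg(\gamma_{X_0})$. Combining with $d=\deg(\gamma_{X_s})$ for general $s\in S'$, we conclude $\deg(\gamma_{X_0})\leq \deg(\gamma_{X_s})$ for $s$ in a dense open subset of $S'$, which is exactly the curve-wise lower semicontinuity we needed. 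Together with constructibility (Lemma \ref{lem:const}) this proves that the loci $S_d=\{s\mid \deg(\gamma_{X_s})\leq d\}$ are closed, i.e.\ the degree map is lower semicontinuous.

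The main technical point to be careful about is the bookkeeping of the two degenerate cases that have been swept into the convention $\deg(\gamma_{X_s})=0$: namely when $X_s$ is not of general type (so the Gauss map is not dominant) the identity $\deg(\Lambda_{X_s})=\deg(\gamma_{X_s})$ still holds by Lemma \ref{lem:degreGauss}, because both sides vanish — this is where Proposition \ref{prop:degree-positive} is genuinely used, not just positivity but the equivalence with general type. One should also make sure the normalization/base-change reduction of the curve is legitimate: passing to a finite cover $\tilde S'\to S'$ pulls back the abelian scheme and the flat family $X$, does not change any fiber $X_s$, and hence does not change the function $s\mapsto\deg(\gamma_{X_s})$; lower semicontinuity downstairs follows from lower semicontinuity upstairs since the cover is surjective and closed. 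No new obstacle arises beyond these routine verifications, since all the real work is already contained in Lemma \ref{lem:specialization}, Proposition \ref{prop:degree-constant} and Lemma \ref{lem:degreGauss}.
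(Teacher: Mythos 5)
Your proposal is correct and follows essentially the same route as the paper's proof: reduce to a smooth curve via constructibility and normalization, then combine the constancy of $\deg(\sp_s(\Lambda_{X/S}))$ (Proposition~\ref{prop:degree-constant}), the Lagrangian specialization formula (Lemma~\ref{lem:specialization}), and the nonnegativity $\deg(\Lambda_Z)=\deg(\gamma_Z)\geq 0$ from Lemma~\ref{lem:degreGauss}. If anything, your bookkeeping of the coefficient $m_{X_0}\geq 1$ is slightly more careful than the paper's, which tacitly treats it as $1$; the inequality goes through either way.
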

\begin{proof}
By Lemma \ref{lem:const}, we know that the map is constructible. We have to show that its values decreases under specialization. For this we may assume that $S$ is a curve, and after base change to its normalization we may assume this curve to be smooth. Let $d$ be the value from proposition~\ref{prop:degree-constant}. With notations as in lemma~\ref{lem:specialization} then
\[
 \deg(\Lambda_{X_s}) \;=\;
 \begin{cases}
 d & \textnormal{if $s\notin \Sigma$}, \\
 d - \delta(s) & \textnormal{if $s\in \Sigma$},
 \end{cases} 
\]
where $\delta(s) = \sum_{Z\subset X_s} m_Z\cdot \deg(\Lambda_Z)$ with multiplicities $m_Z \geq 0$. Now in the case of abelian varieties the occuring degrees coincide with the degrees of the corresponding Gauss maps by lemma~\ref{lem:degreGauss}. In particular, since the degrees of Gauss maps are obviously nonnegative, we have $\deg(\gamma_Z)\geq 0$ and therefore $\delta(s) \geq 0$, which proves that the degree of the Gauss map is constant on an open dense subset and can only drop on the finitely many points of the complement. \end{proof}

To see where the function in the previous corollary actually jumps, recall that a subvariety $Z\subset A_s$ has Gauss degree $\deg(\gamma_Z)=0$ iff it is not of general type. Thus we obtain the following sufficient jumping criterion:

\begin{cor}\label{cor:strict_semicon}
Suppose $\dim(S)=1$. Let $X\subset A$ be a divisor which is flat over~$S$ and let~$0\in S(k)$ be a point such that $\Sing(X_0)$ has an irreducible component $Z$ which is of general type and not contained in the closure of $\bigcup_{t\neq 0}\Sing(X_t)$. Then 
there is an open dense subset $U\subset S$ such that\medskip
$$\deg(\gamma_{X_t}) -\deg(\gamma_{X_0}) \;\geq\; \deg(\gamma_Z) \;>\; 0
\quad \textnormal{\em for all} \quad t\;\in\; U(k) \setminus \{0\}.$$
\end{cor}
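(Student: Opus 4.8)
The plan is to reduce everything to the two ingredients already assembled: the specialization formula of Lemma~\ref{lem:specialization} together with the constancy of the total degree from Proposition~\ref{prop:degree-constant}, and the extra-component criterion of Proposition~\ref{prop:component-in-sp}. First I would reduce to the situation of Section~\ref{sec:specialization}: the hypothesis $\dim(S)=1$ is in force, and after replacing $S$ by the normalization of an affine open neighborhood of $0$ we may assume $S$ is a smooth affine curve (this does not affect the hypothesis on $Z$, since normalization is finite and the Zariski closure of $\bigcup_{t\neq 0}\Sing(X_t)$ pulls back correctly). The ambient family $A\to S$ is smooth, so we are exactly in the setting where Lemma~\ref{lem:specialization} and Proposition~\ref{prop:component-in-sp} apply with $W=A$ and $\codim(X,A)=1$.

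Next I would verify that the component $Z\subset \Sing(X_0)$ is in fact a component of the \emph{relative} singular locus $\Sing(X/S)$ that is contained in the fiber over $0$. By generic smoothness ($\mathrm{char}(k)=0$), for $t$ in a dense open $U_0\subset S$ the fiber $X_t$ is smooth in relative codimension large enough that $\Sing(X/S)\cap X_t=\Sing(X_t)$; shrinking $S$ we may assume $\Sing(X/S)$ maps to $S$ with image either all of $S$ or a finite set, component by component. The hypothesis that $Z$ is not contained in the closure of $\bigcup_{t\neq 0}\Sing(X_t)$ forces the component of $\Sing(X/S)$ through the generic point of $Z$ to lie entirely in $X_0$: otherwise its projection to $S$ would be dominant and its general fiber would be a subvariety of $\Sing(X_t)$ for $t\neq 0$ whose closure contains $Z$, a contradiction. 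Hence Proposition~\ref{prop:component-in-sp} applies to this component, giving
\[
 \sp_0(\Lambda_{X/S}) - \Lambda_{X_0} \;\geq\; \Lambda_Z
\]
as effective cycles (note $Z$ here is reduced irreducible of general type by hypothesis, and one uses that $\Lambda_{Z^{\mathrm{red}}}=\Lambda_Z$ since conormal varieties only see the reduced structure).

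Now I would take degrees. By Proposition~\ref{prop:degree-constant} the total degree $d=\deg(\sp_s(\Lambda_{X/S}))$ is independent of $s$, and by Lemma~\ref{lem:specialization} there is a finite $\Sigma\subset S$ with $\sp_t(\Lambda_{X/S})=\Lambda_{X_t}$ for $t\notin\Sigma$; set $U=(S\setminus\Sigma)\cup\{0\}$, a dense open containing $0$. For $t\in U(k)\setminus\{0\}$ we get $\deg(\gamma_{X_t})=\deg(\Lambda_{X_t})=d$ by Lemma~\ref{lem:degreGauss}, while applying $\deg$ to the displayed inequality and using additivity and nonnegativity of $\deg$ on effective conormal cycles (Lemma~\ref{lem:degreGauss} again, since all components appearing live in an abelian variety) yields $\deg(\Lambda_{X_0}) + \deg(\Lambda_Z) \leq \deg(\sp_0(\Lambda_{X/S})) = d$, together with the nonnegativity of the remaining terms. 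Rearranging,
\[
 \deg(\gamma_{X_t}) - \deg(\gamma_{X_0}) \;=\; d - \deg(\Lambda_{X_0}) \;\geq\; \deg(\Lambda_Z) \;=\; \deg(\gamma_Z),
\]
and $\deg(\gamma_Z)>0$ because $Z$ is of general type, by Proposition~\ref{prop:degree-positive}. The main obstacle is the second step: one must argue carefully that the given component of $\Sing(X_0)$ genuinely produces a component of the \emph{relative} singular locus contained in the central fiber, so that Proposition~\ref{prop:component-in-sp} can be invoked; everything after that is formal bookkeeping with effective cycles and their degrees.
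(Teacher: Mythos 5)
Your proposal is correct and follows essentially the same route as the paper: invoke Proposition~\ref{prop:component-in-sp} to produce the extra component $\Lambda_Z$ in $\sp_0(\Lambda_{X/S})$, then take degrees using the constancy of the total degree (Proposition~\ref{prop:degree-constant}), the nonnegativity of conormal degrees on abelian varieties, and the fact that $\deg(\gamma_Z)>0$ because $Z$ is of general type. The one step the paper leaves implicit --- that the hypothesis on $Z$ forces it to be an irreducible component of the relative singular locus $\Sing(X/S)$ contained in the fiber over $0$ --- you verify correctly.
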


\begin{proof}
The first inequality follows from prop.~\ref{prop:component-in-sp}, the second from our assumption that the subvariety $Z\subset A_0$ is of general type. 
\end{proof}

\medskip 

\section{Application to the Schottky problem} \label{sec:schottky} 

The moduli space $\scrA_g$ of principally polarized abelian varieties of dimension~$g$ over the field $k$ admits the finite filtration 
$\cdots \subseteq \scrG_d \subseteq \scrG_{d-1} \subset \cdots \subseteq \scrG_{g!} = \scrA_g$
by the {\em Gauss loci}
$$
\scrG_d \;:= \; \{(A,\Theta)\in \scrA_g \, | \, \deg(\gamma_{\Theta}) \leq d\}
\;\subseteq\; \scrA_g.
$$
Our semicontinuity result implies:

\begin{cor}\label{cor:closed}
For any $d\in \bbN$ the Gauss loci $\scrG_d$ are closed in $\scrA_g$.
\end{cor}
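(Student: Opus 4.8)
The plan is to deduce Corollary~\ref{cor:closed} from the lower semicontinuity result of Corollary~\ref{cor:semicon} by a standard curve-testing argument. The assertion is local on $\scrA_g$, so the real content is: if a family of ppav's is parametrized by an irreducible variety and the generic member has $\deg(\gamma_\Theta)\le d$, then so does every member. By the valuative criterion for closedness (or more concretely by the fact that a constructible set stable under generization is open, equivalently a constructible set stable under specialization is closed), it suffices to check this along an arbitrary curve through the point in question.

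\begin{proof}
By Lemma~\ref{lem:const} applied to the universal theta divisor over $\scrA_g$ (or over a suitable level cover on which it exists as an honest scheme flat over the base, which does not affect the statement since the Gauss degree is invariant under the finite level covering), the function $(A,\Theta)\mapsto \deg(\gamma_\Theta)$ is constructible on $\scrA_g$. Hence each locus $\scrG_d=\{\deg(\gamma_\Theta)\le d\}$ is constructible, and to prove it is Zariski closed it is enough to show it is stable under specialization. Let $\eta\rightsquigarrow 0$ be a specialization of points of $\scrA_g$, with $\eta\in\scrG_d$; we must show $0\in\scrG_d$. Choose an irreducible curve $S'$ in $\scrA_g$ passing through $0$ whose generic point specializes to $\eta$ in the appropriate sense, i.e.\ so that the generic member $(A_s,\Theta_s)$ of the induced family over $S'$ has $\deg(\gamma_{\Theta_s})\le d$ (possible since $\scrG_d$ is constructible and contains $\eta$, hence contains a dense open subset of some such curve). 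Replacing $S'$ by its normalization, which is a smooth curve and only makes the family more convenient, we pull back the universal abelian scheme and theta divisor to get an abelian scheme $A\to S'$ with a relatively ample divisor $\Theta\subset A$ flat over $S'$, whose fiber over the point lying above $0$ is $(A_0,\Theta_0)$. Now Corollary~\ref{cor:semicon} says the function $s\mapsto\deg(\gamma_{\Theta_s})$ is lower semicontinuous on $S'(k)$; since it is $\le d$ on a dense open subset, it is $\le d$ at the special point as well, so $\deg(\gamma_{\Theta_0})\le d$, i.e.\ $0\in\scrG_d$. As $\scrG_d$ is constructible and stable under specialization, it is closed.
\end{proof}

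The only mild technical point is the passage to the moduli space: $\scrA_g$ is a stack (or a coarse space) and the universal theta divisor need not exist over it literally, but it does after a finite level structure is added, and both constructibility (Lemma~\ref{lem:const}) and lower semicontinuity along curves (Corollary~\ref{cor:semicon}) descend along the finite covering map since the Gauss degree of a fiber depends only on the polarized abelian variety, not on the level structure, and a finite surjection is both open and closed on constructible sets. I expect this descent bookkeeping to be the main (though minor) obstacle; the geometric heart of the argument is entirely contained in Corollary~\ref{cor:semicon}, which in turn rests on Lagrangian specialization and the positivity Proposition~\ref{prop:degree-positive}.
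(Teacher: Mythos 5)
Your proof is correct and follows essentially the same route as the paper: pass to a finite level cover of $\scrA_g$ carrying a universal theta divisor, note constructibility of the Gauss degree via Lemma~\ref{lem:const}, and conclude closedness from the lower semicontinuity of Corollary~\ref{cor:semicon}. The curve-testing reduction you spell out is already contained in the proof of Corollary~\ref{cor:semicon}, so your argument just makes the paper's terse proof explicit.
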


\begin{proof}
The moduli space $\scrA_g$ has a finite cover by a smooth quasi-projective variety over which there exists universal theta divisor. On this cover the Gauss maps fit together in a family of rational maps as in the setting of section \ref{sec:gen}. The Gauss loci are the level sets of the degree map, and we have to show that this map is lower-semicontinuous. This follows from corollary \ref{cor:semicon}. 
\end{proof}

Using our sufficient criterion for jumps in the degree of Gauss maps, we can now show that the stratification by the Gauss loci refines the stratification by the Andreotti-Mayer loci
\[
 \scrN_c \;=\; \{ (A, \Theta) \in \scrA_g \mid \dim \Sing(\Theta) \geq c\}
\]
from~\cite{AM}. Some care is needed because the singular locus of the theta divisor may have components which are negligible, i.e.~not of general type:

\begin{rem}
There are indecomposable ppav's $(A,\Theta)\in \scrA_g$ with a theta divisor for which $\Sing(\Theta)\subset A$ is negligible. This even happens for generic ppav's on certain irreducible components of Andreotti-Mayer loci: For instance, for~$g=5$ one can show that for a generic ppav on the component $\scrE_{5,1}\subset \scrN_1$ from~\cite[thm.~4.1(ii)]{DebarreCodimension3} the singular locus $\Sing(\Theta)$ is an elliptic curve. 
\end{rem}

A more detailed discussion will be given in a forthcoming work by Constantin Podelski. In any case negligible components can only appear on decomposable abelian varieties, hence the following corollary of our jumping criterion covers all Andreotti-Mayer strata whose general point is a simple abelian variety:

\begin{cor} \label{cor:AM}
Let $c\in \bbN$, and let $\scrN \subset \scrN_c$ be an irreducible component whose general point is a ppav whose singular locus of the theta divisor has no negligible components. Then~$\scrN$ is an irreducible component of $\scrG_d$ for some $d\in \bbN$.
\end{cor}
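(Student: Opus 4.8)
The plan is to fix an irreducible component $\scrN \subseteq \scrN_c$ as in the statement, pick a general point $(A_0, \Theta_0) \in \scrN$, and produce a one-parameter family degenerating to it along which the Gauss degree strictly drops, so that $\scrN$ is forced to be a component of the sublocus where the degree attains its value at $(A_0,\Theta_0)$. More precisely, set $d_0 = \deg(\gamma_{\Theta_0})$; I will show that $\scrN$ is an irreducible component of $\scrG_{d_0}$. By Corollary~\ref{cor:closed} the locus $\scrG_{d_0}$ is closed and contains $\scrN$ (since $\deg(\gamma_{\Theta_0}) = d_0$ and, by lower semicontinuity of the Gauss degree from Corollary~\ref{cor:semicon}, every point of $\scrN$ has Gauss degree $\leq d_0$; in fact equality holds on a dense open of $\scrN$ by constructibility, so we may take $(A_0,\Theta_0)$ general with $\deg(\gamma_{\Theta_0})=d_0$ minimal on $\scrN$ — wait, we want $d_0$ to be the \emph{generic} value on $\scrN$). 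Let me restate: let $d_0$ be the generic value of $s\mapsto \deg(\gamma_{\Theta_s})$ on $\scrN$, attained on a dense open $\scrN^\circ \subseteq \scrN$, and fix $(A_0,\Theta_0)\in\scrN^\circ$ general. Then $\scrN \subseteq \scrG_{d_0}$, and it remains to show $\scrN$ is not properly contained in any irreducible component of $\scrG_{d_0}$.

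Suppose for contradiction that $\scrN \subsetneq \scrN'$ for an irreducible component $\scrN'$ of $\scrG_{d_0}$. Since $\scrN$ is a component of $\scrN_c = \{\dim\Sing(\Theta)\geq c\}$, a general point of $\scrN'$ lies outside $\scrN_c$, i.e.~has $\dim\Sing(\Theta) < c$ (here I use that $\scrN_c$ is closed and $\scrN$ is a \emph{component} of it, so $\scrN'\setminus\scrN_c \neq \emptyset$, hence a general point of the irreducible $\scrN'$ lies in the open set $\scrN'\setminus\scrN_c$). Now choose a curve $S' \subseteq \scrN'$ through the general point $0 = (A_0,\Theta_0)$ of $\scrN$ whose general point lies in $\scrN' \setminus \scrN_c$; after passing to the normalization of $S'$ and then to a smooth quasi-projective cover of $\scrA_g$ carrying a universal theta divisor (as in the proof of Corollary~\ref{cor:closed}), we obtain a smooth curve $S$, an abelian scheme $A\to S$, and a relatively ample divisor $X\subset A$, flat over $S$, with fiber $\Theta_0$ over $0$ and with $\dim\Sing(X_t) < c = \dim\Sing(\Theta_0)$ for $t$ in a dense open of $S$.

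The key step is then to verify the hypothesis of Corollary~\ref{cor:strict_semicon}: I claim $\Sing(\Theta_0)$ has an irreducible component $Z$ of general type which is not contained in the closure of $\bigcup_{t\neq 0}\Sing(X_t)$. For the second property, take any irreducible component $Z$ of $\Sing(\Theta_0)$ of maximal dimension $c$; since $\dim\Sing(X_t)<c$ for general $t$, the Zariski closure $\overline{\bigcup_{t\neq 0}\Sing(X_t)}$ has all components of dimension $\leq (c-1)+1 = c$, but any component of dimension exactly $c$ that it contains and that meets the fiber over $0$ does so in dimension $\leq c-1$; hence such a $Z$ of dimension $c$ cannot be contained in that closure. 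For the general-type property, I use the hypothesis that for a general point of $\scrN$ the singular locus of the theta divisor has no negligible components — this is exactly where the assumption of the corollary enters, guaranteeing every component of $\Sing(\Theta_0)$, in particular $Z$, is of general type. With both properties checked, Corollary~\ref{cor:strict_semicon} yields an open dense $U\subseteq S$ with $\deg(\gamma_{X_t}) > \deg(\gamma_{\Theta_0}) = d_0$ for all $t\in U(k)\setminus\{0\}$, contradicting $S'\subseteq \scrN' \subseteq \scrG_{d_0}$. This contradiction shows $\scrN$ is itself a component of $\scrG_{d_0}$, completing the proof.

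The main obstacle is the bookkeeping around which $d_0$ to use and ensuring the curve $S'$ genuinely exits $\scrN_c$: one must argue that a general point of the hypothetical larger component $\scrN'$ has strictly smaller-dimensional singular locus than $\Theta_0$, which relies on $\scrN$ being a full irreducible component of $\scrN_c$ (not merely contained in it). A secondary subtlety is that after passing to a finite cover of $\scrA_g$ the component $\scrN'$ may break into several pieces, so one should fix a component of its preimage through a chosen lift of $0$; and one must confirm that "general point of $\scrN$ has no negligible components in $\Sing(\Theta)$" survives the passage to the cover, which it does since the cover is étale-locally an isomorphism onto $\scrA_g$ away from the boundary and the property is intrinsic to the fiber $(A,\Theta)$.
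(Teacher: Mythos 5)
Your proof is correct and follows essentially the same route as the paper: pick a general point of $\scrN$, choose a curve through it that exits $\scrN_c$, and combine the jump criterion of Corollary~\ref{cor:strict_semicon} with the closedness and lower semicontinuity of the Gauss degree. The only cosmetic difference is that you phrase the conclusion as a contradiction with a hypothetical strictly larger component of $\scrG_{d_0}$, whereas the paper directly exhibits a nonempty open subset of $\scrN$ as an open subset of $\scrG_d$ and passes to closures.
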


\begin{proof}
Let $s \in \scrN(k)$ be a general point on the given component. Since the moduli space of ppav's is a quasiprojective variety, we may pick an affine curve $S\subset \scrA_g$ such that $S\cap \scrN_c = \{s\}$ and $S$ meets $\scrN$ transversely. After passing to a finite cover we may assume that there exists an abelian scheme $f: A\to S$ and a universal theta divisor $\Theta \subset A$ over this curve, and by our choice of the curve we have \smallskip
\[
 \dim \Sing(\Theta_t) \; < \; \dim \Sing(\Theta_{s})
 \quad \textnormal{for all $t\neq s$}. 
\]
Hence $\Sing(\Theta_s)$ has an irreducible component not in the closure of $\bigcup_{t\neq s} \Sing(\Theta_t)$ and so
\[
 \deg(\gamma_{\Theta_t}) \;>\; \deg(\gamma_{\Theta_s})
  \quad \textnormal{for all $t\neq s$} 
\]
by corollary~\ref{cor:strict_semicon}. Varying $s$ in an open subset of the component $\scrN$ and varying $S$ among all curves meeting this component transversely in the chosen point, we get that some nonempty open subset of $\scrN$ is also an open subset of a Gauss locus $\scrG_d$ for some $d\in \bbN$. Hence the result follows by passing the the closure, since both the Andreotti-Mayer loci and the Gauss loci are closed in $\scrA_g$.
\end{proof}

As an application we get that the stratification by the degree of the Gauss map gives a solution to the Schottky problem as conjectured in~\cite{CGS}, where for the Prym version we denote by $D(g)$ the degree of the varieties of quadrics in $\bbP^{g-1}$ of rank at most three:

\begin{cor} \label{cor:Schottky}
We have the following components of Gauss loci in $\scrA_g$: \medskip 

\noindent (a) The locus of Jacobians is a component of $\scrG_{d}$ for $d=\binom{2g-2}{g-1}$.\medskip

\noindent (b) The locus of hyperelliptic Jacobians is a is a component of $\scrG_{d}$ for $d=2^{g-1}$.\medskip

\noindent (c) The locus of Prym varieties is a is a component of $\scrG_d$ for $d=D(g)+2^{g-3}$.\medskip
\end{cor}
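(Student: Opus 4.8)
The plan is to reduce Corollary~\ref{cor:Schottky} to Corollary~\ref{cor:AM} by checking, for each of the three loci, that (i) it is an irreducible component of an Andreotti-Mayer locus $\scrN_c$, (ii) the generic point has a simple abelian variety so that the singular locus of the theta divisor has no negligible components, and (iii) the generic degree of the Gauss map on the theta divisor equals the asserted value of $d$. Granting (i)--(iii), Corollary~\ref{cor:AM} immediately yields that each locus is a component of $\scrG_d$ for the stated $d$.

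\medskip

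For part (a), recall that by Andreotti-Mayer the Jacobian locus $\scrJ_g$ is an irreducible component of $\scrN_{g-4}$, and its generic point is a simple ppav (a general Jacobian is simple), so (i) and (ii) hold. For (iii) I would invoke the classical fact that for a general curve $C$ of genus $g$ with theta divisor $\Theta = W_{g-1} \subset \Pic^{g-1}(C)$, the Gauss map $\gamma_\Theta$ has degree equal to the number of $g^{g-1}_{2g-2}$-type incidences, which by a Riemann-Roch / excess intersection computation on the curve (equivalently, the degree of the Abel-Jacobi image under the canonical map, see the work of Andreotti and the subsequent literature) equals $\binom{2g-2}{g-1}$. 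Concretely one computes $\deg(\gamma_\Theta) = \deg(\Lambda_{W_{g-1}})$ using Lemma~\ref{lem:degreGauss} together with the formula in Example~\ref{ex:local-euler} for a general (smooth in codimension relevant range) theta divisor, reducing it to a top Chern class / Poincaré formula computation on $\Pic^{g-1}(C)$; this yields the central binomial coefficient. For part (b), the hyperelliptic Jacobian locus $\scrH_g$ is known to be an irreducible component of $\scrN_{g-3}$ with general point simple, and the Gauss map of the theta divisor of a hyperelliptic Jacobian factors through the quotient by the hyperelliptic involution, which accounts for the degree $2^{g-1}$ (each of the $g-1$ coordinates contributes a factor $2$ via the $2:1$ structure of the hyperelliptic map on fibers); again one makes this precise via Lemma~\ref{lem:degreGauss}. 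For part (c), the Prym locus sits as a component of a suitable Andreotti-Mayer locus (for general $g$, by results of Debarre and others on the singular locus of Prym theta divisors), its general point is simple, and $\deg(\gamma_\Theta)$ for a general Prym variety is computed in the literature (Verra, Beauville, and in the form used in \cite{CGS}) as the degree of the map to $\bbP V$ cut out by the Prym-canonical system, which equals $D(g) + 2^{g-3}$ where $D(g)$ is the degree of the rank-$\leq 3$ quadric locus in $\bbP^{g-1}$; this splits according to the two components of the Prym theta divisor's ``Gauss image'' geometry.

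\medskip

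The main obstacle will be verifying condition (i)---that each of these three loci is genuinely an \emph{irreducible component} of an Andreotti-Mayer locus, not merely contained in one---since Corollary~\ref{cor:AM} produces a component of $\scrG_d$ only out of a component of $\scrN_c$. For Jacobians and hyperelliptic Jacobians this is classical (Andreotti-Mayer, Beauville, Mumford), but one should cite it carefully; for Prym varieties the statement is more delicate and relies on the dimension count and the explicit description of $\Sing(\Theta)$ for a general Prym, so I would lean on \cite{DebarreCodimension3} and the references in \cite{CGS}. The second genuine point is the degree computations in (iii): these are not circular with the rest of the paper because the values $\binom{2g-2}{g-1}$, $2^{g-1}$, $D(g)+2^{g-3}$ are established independently in \cite{CGS} (and going back to Andreotti's and Mumford's work on the theta map), and here I would simply quote those computations, noting that Lemma~\ref{lem:degreGauss} makes them equivalent to our degree of the conormal variety. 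A minor subtlety is ensuring the general point of each locus is \emph{simple}, hence has no negligible components in $\Sing(\Theta)$ by the remark following Corollary~\ref{cor:AM}; for all three families the generic member is irreducible as a ppav and not a product, which one can see by a monodromy or deformation argument, or by citing standard irreducibility results.
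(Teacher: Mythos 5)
Your proposal is correct and follows essentially the same route as the paper: reduce to Corollary~\ref{cor:AM} by citing that each locus is a component of an Andreotti--Mayer locus (\cite{AM} resp.\ \cite{DebarrePrym}), that the general member is simple (hence $\Sing(\Theta)$ has no negligible components), and that the generic Gauss degree has the stated classical value (\cite{Andreotti} resp.\ \cite{VerraGauss}). The paper's proof is exactly this three-step reduction with the same references, so no further comment is needed.
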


\begin{proof}
The locus of Jacobians is a component of an Andreotti-Mayer locus by~\cite{AM}, and by \cite[Proposition 3.4]{Pirola} a general Jacobian variety is a simple abelian variety and thus in particular has no negligible subvarieties other than itself. Furthermore, it is well-known that the degree of the Gauss map of a Jacobian is $d=\binom{2g-2}{g-1}$, see e.g.~\cite[proof of prop.~10]{Andreotti}. Hence part (a) follows from corollary \ref{cor:AM}. If we replace Jacobians by hyperelliptic Jacobians, the above arguent works also in the hyperelliptic case, with the same references.
For Prym varieties the argument is again the same but now one has to replace reference~\cite{AM} with \cite{DebarrePrym}, and reference~\cite{Andreotti} with \cite[Main Theorem]{VerraGauss}. In the last reference the reader can also find an explicit expression for the number $D(g)$.
\end{proof}

\medskip 

\section{A topological view on jump loci} \label{sec:IC}

In this section we work over the complex numbers with the Euclidean topology. 
Let $W$ be a smooth complex projective variety. For a closed subvariety~$X\subset W$ the singular locus $\Sing(X)$ and the conormal degree $\deg(\Lambda_X)$ are not topological invariants of the subvariety, as example~\ref{ex:local-euler} shows. But both are related to the intersection cohomology $\mathrm{IH}^\bullet(X)$ which only depends on the homeomorphism type of the subvariety in the Euclidean topology; see~\cite{BorelIC, GMIntersectionI,GMIntersectionII, KirwanWoolf, MaximIHP}. The Euler characteristic\bigskip
\[
 \chi_\mathrm{IC}(X) \;=\; \sum_{i\geq 0} \; (-1)^{i+\dim(X)} \dim \mathrm{IH}^i(X)
\]
%
can be read off from a generalization of the Gauss-Bonnet theorem: The Kashiwara index formula~\cite[th.~9.1]{GinzburgCharacteristic} writes it as a degree in the sense of definition~\ref{defn:degree}. More precisely
\[ \chi_\IC(X) \;=\; \deg(\CC(\delta_X)), \]
where $\delta_X\in \Perv(W)$ denotes the perverse intersection complex of $X\subseteq W$ \cite{BBD, DCM} and where the characteristic cycle $\CC(\delta_X) \in \scrL(W)$ is an effective conic Lagrangian cycle  which contains $\Lambda_X$ as a component of multiplicity one but may also have as components the conormal varieties to certain $Z\subseteq \Sing(X)$. Passing from conormal varieties to characteristic cycles restores topological invariance of the degree:

\begin{ex} \label{ex:cusp}
In $W=\bbP^2$ the conormal degree for a smooth rational curve differs from the one for a cuspical cubic, see example~\ref{ex:local-euler}. But this is compensated by a difference in
\[
 \CC(\delta_X) \;=\; 
 \begin{cases}
 \Lambda_X & \textnormal{if $X$ is a smooth rational curve}, \\
 \Lambda_X + \Lambda_{\{p\}} & \textnormal{if $X$ is a cuspidal cubic with cusp $p$}.
 \end{cases}
\]
which in both cases gives the total degree $\deg(\CC(\delta_X))=-2$.
\end{ex}

In what follows we want to understand how for a morphism $X\to S$ of complex varieties the intersection cohomology Euler characteristic of the fibers varies. Basic stratification theory implies:

\begin{lem}
The map $s\mapsto \chi_\IC(X_s)$ is constructible.
\end{lem}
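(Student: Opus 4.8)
The plan is to reduce the statement to the constructibility of an Euler characteristic attached to a family of perverse sheaves, and then invoke generic base change together with stratification theory. First I would choose a stratification of $S$ that is fine enough to control the family $X\to S$: since all the varieties involved are complex algebraic, by generic flatness and Noetherian induction there is a finite partition $S=\bigsqcup_j S_j$ into locally closed subvarieties such that over each $S_j$ the morphism $X_{S_j}\to S_j$ is flat with a Whitney stratification that is locally trivial along $S_j$ in the stratified sense (Thom--Mather). Over such a stratum the fibers $X_s$ are all topologically homeomorphic in the Euclidean topology, compatibly with their singular strata, so in particular the intersection cohomology sheaves and hence the numbers $\dim\mathrm{IH}^i(X_s)$ are constant on $S_j$. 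Since $\dim(X_s)$ is also constant on each stratum (flatness), the signed sum $\chi_\IC(X_s)$ is constant on $S_j$, which gives constructibility.

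Alternatively, and more in the spirit of the rest of the paper, I would phrase this sheaf-theoretically. Replacing $S$ by a suitable locally closed stratum we may assume $f:X\to S$ is flat; let $\delta_{X/S}$ be the relative intersection complex, a perverse sheaf on $X$ whose restriction to each fiber is $\delta_{X_s}$ up to the shift coming from $\dim S$. Then $Rf_*\delta_{X/S}$ is a bounded constructible complex on $S$, so by generic base change (Deligne, as in \cite{BBD}) there is a dense open $U\subseteq S$ over which the formation of $Rf_*\delta_{X/S}$ commutes with restriction to fibers and has locally constant cohomology sheaves. The hypercohomology of the fiber of $Rf_*\delta_{X/S}$ over $s\in U$ computes $\mathrm{IH}^\bullet(X_s)$ up to shift, so $\chi_\IC(X_s)$ is constant on $U$; Noetherian induction on the complement $S\setminus U$ then finishes the argument.

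The only genuinely delicate point is matching the shift conventions: $\chi_\IC$ is defined with the sign $(-1)^{i+\dim X_s}$, and on a stratum where $\dim X_s$ jumps one cannot simply say $\chi_\IC$ is constant --- but flatness forces $\dim X_s$ to be constant on each stratum, so this is a bookkeeping issue rather than a real obstacle. Likewise one must be slightly careful that the fibers $X_s$ are the ones with the reduced structure (the hypothesis of the theorem is about generically reduced fibers, but here we only need constructibility, so we may pass to $(X_s)_{\mathrm{red}}$ stratum by stratum without affecting intersection cohomology). The main work, such as it is, is purely the invocation of stratified triviality / generic base change; I expect no hidden difficulty beyond citing \cite{GMIntersectionI, GMIntersectionII} or \cite{BBD} for these standard facts.

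\begin{proof}
By Noetherian induction it suffices to produce a dense open subset $U\subseteq S$ on which $s\mapsto \chi_\IC(X_s)$ is constant. Shrinking $S$, we may assume $S$ is irreducible and, by generic flatness, that $f:X\to S$ is flat; then all fibers $X_s$ have the same dimension $n$. Replacing $X$ by its reduction does not change $\mathrm{IH}^\bullet$ of the fibers over a dense open subset, so we may assume $X$ reduced. Let $\delta_{X/S}$ be the intersection complex of $X$ relative to $S$, normalized so that $\delta_{X/S}|_{X_s}=\delta_{X_s}[\dim S]$ for $s$ in a dense open subset. The complex $Rf_*\delta_{X/S}$ is bounded and constructible on $S$, so by generic base change~\cite{BBD} there is a dense open $U\subseteq S$ over which it has locally constant cohomology sheaves and its formation commutes with base change to points. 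For $s\in U$ the stalk of $Rf_*\delta_{X/S}$ at $s$ therefore computes $\mathrm{IH}^\bullet(X_s)$ up to the shift by $\dim S$, so $\sum_i (-1)^i\dim\mathrm{IH}^i(X_s)$ is constant on $U$; since $\dim(X_s)=n$ is constant as well, $\chi_\IC(X_s)=(-1)^n\sum_i(-1)^i\dim\mathrm{IH}^i(X_s)$ is constant on $U$. This completes the induction.
\end{proof}
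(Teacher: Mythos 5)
Your proposal is correct, and your first outline paragraph is essentially the paper's own proof: the paper cites Verdier's theorem that any morphism of complex algebraic varieties is a topologically locally trivial fibration over a dense open subset of the target, combines it with the homeomorphism invariance of intersection cohomology, and concludes by Noetherian induction --- exactly your Thom--Mather argument. The formal proof you actually wrote down takes the alternative, sheaf-theoretic route via the relative intersection complex and generic base change. That route also works, but note that its one nontrivial input --- the assertion that $\delta_{X/S}$ restricts to $\delta_{X_s}$ (up to shift) on the fibers over a dense open subset --- is itself usually justified by choosing a Whitney stratification of $X$ adapted to $f$ and shrinking $S$ until the strata are equisingular along the fibers, i.e.\ by the same stratified local triviality that powers the first argument. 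So the sheaf-theoretic proof is not really more elementary; what it buys is a formulation that sits closer to the nearby-cycles machinery used later in the paper (theorem on $\Psi_s(\delta_X)$ and its characteristic cycle), whereas the paper's choice of the purely topological argument keeps this lemma independent of any perverse-sheaf input. Your remarks about the shift $(-1)^{\dim X_s}$ being constant on flat strata and about passing to reduced fibers are both correct and worth keeping.
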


\begin{proof}
The homeomorphism invariance of intersection cohomology~\cite[th.~4.1]{GMIntersectionII} shows that for a topologically locally trivial fibration over a connected base, all fibers have the same intersection cohomology. Any morphism of complex algebraic varieties restricts to a topologically locally trivial fibration over a Zariski open dense subset of the target~\cite[cor.~5.1]{VerdierStratifications}, so we are done by Noetherian induction. 	
\end{proof}

In general the map in this lemma is neither upper nor lower semicontinuous, the jumps may go in both directions:

\begin{ex} \label{ex:IC-not-semicontinuous}
Let $Q\subset \bbP^3$ be a quadric. Then by~\cite[ex.~2.3.21 and th.~2.4.6]{MaximIHP} we have
\[
 \chi_\IC(Q) \;=\; 
 \begin{cases}
 4 & \textnormal{if $Q\simeq \bbP^1 \times \bbP^1$ is smooth}, \\
 3 & \textnormal{if $Q$ is a cone over a smooth rational curve}, \\
 6 & \textnormal{if $Q$ is a union of two projective planes}.
 \end{cases}
\]
So for a family of quadrics whose general member is smooth, the number $\chi_\IC(Q)$ jumps down on nodal quadrics but jumps up on reducible quadrics.
\end{ex} 

Note that here the size of the jumps is precisely the Euler characteristic of the singular locus. This fits with the following sheaf-theoretic version of the Lagrangian specialization principle:

\begin{thm} \label{thm:nearby-cycles}
Let $f: W\to S$ be a smooth proper family over a curve $S$, and let~$X\subset W$ be a closed subvariety such that the morphism $f: X\to S$ is flat with generically reduced fibers. Then there exists $d\in \bbZ$ and a finite subset~$\Sigma \subset S$ such that\medskip
\[
 \chi_\IC(X_s) \;=\;
 \begin{cases}
 d & \textnormal{for $s\in S\setminus \Sigma$}, \\
 d - \deg(\Lambda(s)) & \textnormal{for $s\in \Sigma$},
 \end{cases}  \medskip
\]
where $\Lambda(s) = \sp_s(\CC(\delta_X)) - \CC(\delta_{X_s}) \in \scrL(W_s)$ is an effective cycle.
\end{thm}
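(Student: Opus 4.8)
The plan is to rephrase the intersection cohomology Euler characteristic as the degree of a characteristic cycle, and then run the Lagrangian specialization machinery of Section~\ref{sec:specialization} on the perverse intersection complex $\delta_X$, using the compatibility of characteristic cycles with nearby cycles to identify and bound the limit.

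First I would reduce everything to characteristic cycles. By the Kashiwara index formula quoted above, $\chi_\IC(X_s)=\deg(\CC(\delta_{X_s}))$ for each $s\in S(k)$, so the theorem becomes a comparison of $\deg(\CC(\delta_{X_s}))$ with the degree of a fixed specialization. Since $f\colon W\to S$ is smooth, the perverse sheaf $\delta_X\in\Perv(W)$ has a relative characteristic cycle $\CC_{W/S}(\delta_X)\in\scrL(W/S)$: one writes $\CC(\delta_X)=\Lambda_X+\sum_Z m_Z\Lambda_Z$, an effective cycle with $\Lambda_X$ of multiplicity one and the $Z$ contained in $\Sing(X)$, and pushes it forward along the surjection $T^\vee(W)\twoheadrightarrow T^\vee(W/S)$. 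A component $\Lambda_Z$ whose support $Z$ dominates $S$ --- in particular $Z=X$, which is flat over $S$ by hypothesis --- maps birationally onto $\Lambda_{Z/S}$, because at a general point of $\Sm(Z/S)$ the conormal direction of the fiber is not conormal to $Z$; a component supported inside a single fiber of $f$ is translation invariant in that conormal direction and is pushed to zero. Thus $\CC_{W/S}(\delta_X)=\Lambda_{X/S}+\sum_Z m_Z\Lambda_{Z/S}$ is effective, carries $\Lambda_{X/S}$ with multiplicity one, and has all its $Z$ flat over $S$; in the statement, $\sp_s(\CC(\delta_X))$ is to be understood as $\sp_s(\CC_{W/S}(\delta_X))$.

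The core is the comparison of $\sp_s(\CC_{W/S}(\delta_X))$ with $\CC(\delta_{X_s})$. The compatibility of characteristic cycles with nearby cycles~\cite{GinzburgCharacteristic,SabbahConormaux} identifies $\sp_s(\CC_{W/S}(\delta_X))$ with $\CC(\psi_f\delta_X[-1])$, where $\psi_f$ is the nearby cycle functor along $f$, shifted so as to be perverse on $W_s$. Now $f$ is smooth along the relative smooth locus $\Sm(X/S)$, which contains a dense open subset of $X_s^{\mathrm{red}}$: a smooth point of a fiber is a smooth point of the (flat) total space at which $f$ is submersive, and $X_s$ is generically reduced. Since the nearby cycle of a constant sheaf along a smooth morphism is again constant, $\psi_f\delta_X[-1]$ is a perverse sheaf on $W_s$ that restricts to $\bbQ[\dim X_s]$ on a dense open of each component of $X_s$; hence $\delta_{X_s}$ occurs among its Jordan--H\"older constituents, with the multiplicity of its own top-dimensional pieces, and all other constituents are supported in $\Sing(X_s)$. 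As the characteristic cycle is additive and effective on perverse sheaves, this gives
\[
\Lambda(s)\;=\;\sp_s(\CC_{W/S}(\delta_X))-\CC(\delta_{X_s})\;=\;\CC(\psi_f\delta_X[-1])-\CC(\delta_{X_s})\;\geq\;0,
\]
an effective conic Lagrangian cycle supported over $\Sing(X_s)$. Taking degrees and applying proposition~\ref{prop:degree-constant} to each component, the integer $d:=\deg(\sp_s(\CC_{W/S}(\delta_X)))$ is independent of $s$, so $\chi_\IC(X_s)=\deg(\CC(\delta_{X_s}))=d-\deg(\Lambda(s))$ for all $s$. Finally, over a dense open $S^\circ\subseteq S$ the family $(W,X)\to S$, stratified so that $\delta_X$ is constructible, is a topologically locally trivial fibration~\cite{VerdierStratifications}; for such $s$ one has $\psi_f\delta_X[-1]=\delta_{X_s}$, hence $\Lambda(s)=0$ and $\chi_\IC(X_s)=d$. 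Taking $\Sigma=S\setminus S^\circ$, which is finite since $S$ is a curve, finishes the proof.

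The main obstacle is the middle step: setting up the relative characteristic cycle $\CC_{W/S}(\delta_X)$ and the identity $\sp_s(\CC_{W/S}(\delta_X))=\CC(\psi_f\delta_X[-1])$, and verifying that $\delta_{X_s}$ appears in $\psi_f\delta_X[-1]$ with exactly the multiplicity of its top-dimensional constituents, so that the difference $\Lambda(s)$ is an honest effective cycle rather than a virtual one. The other ingredients --- the index formula, the constancy of the total degree under specialization, and the generic local triviality --- are either quoted directly or standard consequences of stratification theory.
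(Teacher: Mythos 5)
Your proposal is correct and follows essentially the same route as the paper: both reduce to characteristic cycles via the Kashiwara index formula, identify the Lagrangian specialization $\sp_s(\CC(\delta_X))$ with $\CC(\Psi_s(\delta_X))$ using Ginzburg's compatibility theorem, extract $\delta_{X_s}$ from the (semisimplification of the) nearby cycles with the remainder an honest perverse sheaf so that $\Lambda(s)$ is effective, and invoke Proposition~\ref{prop:degree-constant} for the constancy of the total degree. Your extra care with the pushforward to $T^\vee(W/S)$ and the multiplicity-one count over $\Sm(X/S)$ just makes explicit what the paper handles via Remark~\ref{rem:bmm} and the generically-reduced hypothesis.
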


\begin{proof} 
We interpret Lagrangian specialization via perverse sheaves. For $s\in S(\bbC)$ one has the functor of nearby cycles $\Psi_s: \Perv(W)\to \Perv(W_s)$, which is an exact functor with
\[
 \CC(\Psi_s(P)) \;=\; \sp_s(\CC(P))
 \quad \textnormal{for all} \quad P \;\in\; \Perv(W)
\]
by~\cite[th.~5.5]{GinzburgCharacteristic}. Here we abuse notation and view $\CC(P)$ as an element of the group of relative conic Lagrangian cycles $\scrL(W/S)$ via remark~\ref{rem:bmm}, discarding any component that is not flat over $S$. The last part of the specialization lemma~\ref{lem:specialization} has a sheaf-theoretic version: For any closed $S$-flat subvariety $X\subset W$ such that the map $f: X\to S$ has generically reduced fibers, there exists a finite subset $\Sigma \subset S$ such that the semisimplification $(\Psi_s(\delta_X))^\mathrm{ss}$ of the perverse sheaf $\Psi_s(\delta_X)$ has the form
\[ 
(\Psi_s(\delta_X))^\mathrm{ss}
\;\simeq\;
\begin{cases} 
\delta_{X_s} & \textnormal{for $s\in S\setminus \Sigma$}, \\
\delta_{X_s} \oplus P(s) & \textnormal{for $s\in \Sigma$},
\end{cases}   \medskip 
\]
where $P(s)\in \Perv(X_s)$ is a perverse sheaf with support contained in $\Sing(X_s)$. So we get
\[
\sp_s(\CC(\delta_X)) \;=\; 
\CC(\Psi_s(\delta_X))
\;=\;
\begin{cases} 
\CC(\delta_{X_s}) & \textnormal{for $s\in S\setminus \Sigma$}, \\
\CC(\delta_{X_s}) + \Lambda(s) & \textnormal{for $s\in \Sigma$},
\end{cases}   \medskip 
\]
where $\Lambda(s) = \CC(P(s))\in \scrL(W_s)$ is effective, being the characteristic cycle of a perverse sheaf. Hence the result follows by noting that if $f: W\to S$ is proper, then by proposition~\ref{prop:degree-constant} the degree $d=\deg(\sp_s(\CC(\delta_X)))$ is independent of $s$. 
\end{proof}

In the case of abelian varieties the positivity of conormal degrees then gives an analog of theorem~\ref{thm:semicontinuous-in-any-codim}. The same argument works for a much wider class of varieties, we only need the following positivity property:

\begin{defn} 
A variety $X$ satisfies the {\em signed Euler characteristic property} if we have \medskip
\[
 \chi(X, P) \;:=\; \sum_{i\in \bbZ} (-1)^i \dim \mathrm{H}^i(X, P) \;\geq\; 0
 \quad \textnormal{for all} \quad P\;\in\; \Perv(X).
\]
\end{defn}

\noindent The terminology is borrowed from~\cite{EGM}. The above property holds for semiabelian varieties~\cite{FKGauss} and hence also for any finite cover of closed subvarieties of them:

\begin{lem} \label{lem:signed-Euler}
If a variety $A$ has the signed Euler characteristic property, then so does any variety with a finite morphism to $A$. In particular, any variety with a finite morphism to a semiabelian variety has the signed Euler characteristic property.
\end{lem}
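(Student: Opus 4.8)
The plan is to push forward along the finite morphism and invoke the signed Euler characteristic property of the target. So let $\pi\colon X\to A$ be a finite morphism and assume $A$ has the signed Euler characteristic property. First I would record two standard facts about finite morphisms: (i) since $\pi$ is finite its fibres are finite sets, so $R^i\pi_*=0$ for $i>0$ and $R\pi_*$ agrees with the naive termwise pushforward of complexes; (ii) since $\pi$ is affine and proper we have $\pi_*=\pi_!$, and by Artin's affine vanishing theorem together with its dual \cite[\S 4.1]{BBD} the functor $\pi_!$ is right $t$-exact while $\pi_*$ is left $t$-exact for the perverse $t$-structure. Hence $\pi_*$ is perverse $t$-exact, so $\pi_*P\in\Perv(A)$ whenever $P\in\Perv(X)$.

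Granting this, the lemma is immediate: for any $P\in\Perv(X)$, fact (i) gives $\mathrm{H}^i(X,P)\cong \mathrm{H}^i(A,\pi_*P)$ for all $i$, hence $\chi(X,P)=\chi(A,\pi_*P)$; and the right-hand side is $\geq 0$ by the signed Euler characteristic property of $A$ applied to the perverse sheaf $\pi_*P$. Since $P$ was arbitrary, $X$ has the signed Euler characteristic property.

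For the ``in particular'' part I would combine this with the fact, due to Franecki--Kapranov \cite{FKGauss}, that a semiabelian variety $G$ has the signed Euler characteristic property: a closed immersion is finite, so any closed subvariety of $G$ inherits the property from $G$ by the first part, and then any variety admitting a finite morphism to $G$ (or, more generally, to such a closed subvariety) inherits it in turn, again by the first part applied to the composite finite morphism. There is no real obstacle here; the only nontrivial ingredient is the perverse $t$-exactness of $\pi_*$ for a finite morphism $\pi$, which is classical.
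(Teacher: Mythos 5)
Your proof is correct and follows essentially the same route as the paper: push $P$ forward along the finite morphism, note that $R\pi_*P$ is again perverse, and apply the signed Euler characteristic property of the target (with Franecki--Kapranov supplying the semiabelian case). The paper simply asserts the perverse $t$-exactness of $\pi_*$ that you spell out, so the only difference is your added (correct, classical) justification of that step.
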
 

\begin{proof} 
If $f: X\to A$ is a finite morphism, then for any perverse sheaf $P\in \Perv(X)$ the direct image is a perverse sheaf $Rf_*(P)\in \Perv(A)$. If $A$ has the signed Euler characteristic property, which holds for instance for abelian varieties~\cite{FKGauss}, then we get $\chi(A, P)=\chi(A, Rf_*(P))\geq 0$. 
\end{proof}

The above theorem shows that for any family of such varieties the intersection cohomology Euler characteristic is semicontinuous:

\begin{cor} \label{cor:IC-semicontinuous}
Let $f: W\to S$ be a smooth proper morphism to a variety $S$, and let $X\subset W$ be a closed subvariety such that $f: X\to S$ is flat and all its fibers are generically reduced and have the signed Euler characteristic property. 
Then for each $d\in \bbN$ the subsets
$
S_d = \bigl\{ s\in S \mid \chi_{\mathrm{IC}}(X_s) \leq d \bigr\} \subseteq  S
$ 
are Zariski closed.
\end{cor}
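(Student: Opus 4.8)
The plan is to follow the proof of Corollary~\ref{cor:semicon}, with $\chi_\IC$ in place of the degree of the Gauss map and with Theorem~\ref{thm:nearby-cycles} in place of Lemma~\ref{lem:specialization}. By the preceding lemma the function $s\mapsto \chi_\IC(X_s)$ is constructible, so each $S_d$ is a constructible subset of $S$; since a constructible set is Zariski closed as soon as it is stable under specialization, and specializations in a variety can be detected along curves, it suffices to show that $\chi_\IC(X_s)$ can only decrease under specialization. Restricting to a curve through the relevant point and passing to its normalization, I may assume $S$ is a smooth curve; the hypotheses of Theorem~\ref{thm:nearby-cycles} persist under this base change, the only point worth noting being that over $\bbC$ the fiber over a closed point is literally the same complex variety $X_s$, so it still has the signed Euler characteristic property (flatness and generic reducedness of the fibers being immediate in characteristic zero).

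Now Theorem~\ref{thm:nearby-cycles} supplies a constant $d\in\bbZ$, equal to $\chi_\IC(X_\eta)$ for the general point $\eta$, and a finite set $\Sigma\subset S$ with $\chi_\IC(X_s)=d-\deg(\Lambda(s))$ for $s\in\Sigma$, where $\Lambda(s)=\sp_s(\CC(\delta_X))-\CC(\delta_{X_s})\in\scrL(W_s)$ is effective and, as in the proof of that theorem, equals $\CC(P(s))$ for a perverse sheaf $P(s)\in\Perv(W_s)$ with support contained in $\Sing(X_s)\subset X_s$. Everything thus reduces to the inequality $\deg(\Lambda(s))\ge 0$. Here I would apply the Kashiwara index formula \cite[th.~9.1]{GinzburgCharacteristic} --- the same input used to express $\chi_\IC$ as a degree --- which identifies $\deg(\CC(P(s)))$ with the global Euler characteristic $\chi(W_s,P(s))$. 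Since $X_s\hookrightarrow W_s$ is a closed immersion, pushforward along it is exact for the perverse $t$-structure, so $P(s)$ is (the direct image of) an object of $\Perv(X_s)$ and $\chi(W_s,P(s))=\chi(X_s,P(s))$. By the signed Euler characteristic property of $X_s$ this is $\ge 0$, whence $\chi_\IC(X_s)\le d=\chi_\IC(X_\eta)$. Passing to closures as in Corollary~\ref{cor:closed} then gives that each $S_d$ is Zariski closed.

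The step I expect to be the main obstacle is precisely the positivity $\deg(\Lambda(s))\ge 0$: effectivity of $\Lambda(s)$ is already provided by Theorem~\ref{thm:nearby-cycles} but is by itself useless here, because the degree of an effective characteristic cycle --- i.e.\ the Euler characteristic of the corresponding perverse sheaf --- can perfectly well be negative. The content is therefore the translation, via Kashiwara's index theorem, of $\deg(\Lambda(s))$ into $\chi(X_s,P(s))$ together with the hypothesis that the fibers satisfy the signed Euler characteristic property, which by Lemma~\ref{lem:signed-Euler} is what one verifies in practice (e.g.\ for finite covers of subvarieties of semiabelian varieties). A minor, bookkeeping-level issue is that the base change of $X$ to the normalized curve need not be irreducible; if necessary one runs the argument on its finitely many irreducible components, using that both $\mathrm{IH}^\bullet$ and the intersection complex of a reduced variety split over its components.
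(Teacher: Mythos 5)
Your proposal is correct and follows essentially the same route as the paper: reduce to a smooth curve, apply Theorem~\ref{thm:nearby-cycles} to write $\chi_\IC(X_s)=d-\deg(\Lambda(s))$ with $\Lambda(s)=\CC(P(s))$ for a perverse sheaf $P(s)$ supported in $\Sing(X_s)$, and conclude $\deg(\Lambda(s))=\chi(X_s,P(s))\ge 0$ from the signed Euler characteristic property. You merely make explicit the step the paper leaves implicit (the identification of $\deg(\CC(P(s)))$ with $\chi(X_s,P(s))$ via the Kashiwara index formula), and your remark that effectivity of $\Lambda(s)$ alone would not suffice is exactly the right diagnosis of where the hypothesis is used.
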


\begin{proof} 
We must show that $\chi_\IC(X_s)$ cannot increase under specialization. For this we can assume~$S$ is a smooth curve. Then theorem~\ref{thm:nearby-cycles} applies, here $\chi(X_s, P(s))\geq 0$ since $X_s$ has the signed Euler characteristic property.
\end{proof}

In deciding where the Euler characteristic actually jumps, we need to be more careful. Proposition~\ref{prop:component-in-sp} gives a way to see extra components in $\sp_s(\CC(\delta_X))$ but does not guarantee that these enter in a new summand $\Lambda(s)$, a priori they could also appear in $\CC(\delta_{X_s})$; however, this second case can only happen if $\CC(\delta_{X_s})$ is reducible, which one can often exclude by a direct computation.

\medskip 

Let us illustrate this again with theta divisors. Corollary~\ref{cor:IC-semicontinuous} says that for $d\in \bbN$ the loci
\[
\scrX_d \;=\; \{ (A, \Theta) \in \scrA_g \mid \chi_\IC(\Theta) \leq d\} \;\subseteq\; \scrA_g. \medskip
\] 
are closed, and by the homeomorphism invariance of intersection cohomology they only depend on the topology of the theta divisor. This provides a topological view on Andreotti-Mayer loci, for instance:

\begin{cor} \label{cor:IC-AM}
Let $\scrN \subset \scrN_c$ be an irreducible component of an Andreotti-Mayer locus such that a general point of this component is a ppav $(A, \Theta)$ with the property that
\begin{itemize} 
\item $\CC(\delta_\Theta)$ is irreducible, and\smallskip 
\item $\Sing(\Theta)$ has no negligible components. \smallskip
\end{itemize} 
Then $\scrN$ is also an irreducible component of $\scrX_d$ for some $d\in \bbN$.
\end{cor}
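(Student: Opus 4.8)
The plan is to mimic the proof of Corollary~\ref{cor:AM}, replacing the degree of the Gauss map by the intersection cohomology Euler characteristic and using the sheaf-theoretic specialization of Theorem~\ref{thm:nearby-cycles} in place of Corollary~\ref{cor:strict_semicon}. Concretely, let $s\in\scrN(k)$ be a general point, so that $(A_s,\Theta_s)$ satisfies the two hypotheses. Since $\scrA_g$ is quasiprojective, pick an affine curve $S\subset\scrA_g$ meeting $\scrN_c$ only in $s$ and meeting $\scrN$ transversely there; after a finite base change we may assume there is an abelian scheme $f\colon A\to S$ with a universal theta divisor $\Theta\subset A$, and by the choice of $S$ we have $\dim\Sing(\Theta_t)<\dim\Sing(\Theta_s)$ for all $t\neq s$. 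In particular $\Sing(\Theta_s)$ has an irreducible component $Z$ not contained in the closure of $\bigcup_{t\neq s}\Sing(\Theta_t)$, and since a general point of $\scrN$ has no negligible components, $Z$ is of general type.

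The key step is to show that $\chi_\IC(\Theta_t)>\chi_\IC(\Theta_s)$ for $t\neq s$ near $s$. By Theorem~\ref{thm:nearby-cycles} we have $\chi_\IC(\Theta_t)=d$ for generic $t$ and $\chi_\IC(\Theta_s)=d-\deg(\Lambda(s))$ with $\Lambda(s)=\sp_s(\CC(\delta_\Theta))-\CC(\delta_{\Theta_s})$ effective; it therefore suffices to prove $\deg(\Lambda(s))>0$, for which by Proposition~\ref{prop:degree-positive} it is enough that $\Lambda(s)$ contains $\Lambda_Z$ as a component (recall $Z$ is of general type, so $\deg(\Lambda_Z)=\deg(\gamma_Z)>0$). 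Now Proposition~\ref{prop:component-in-sp} applied to the $S$-flat divisor $\Theta\subset A$ gives $\sp_s(\Lambda_{\Theta/S})-\Lambda_{\Theta_s}\geq\Lambda_Z$, and via Remark~\ref{rem:bmm} this is the same statement about $\sp_s(\CC(\delta_\Theta))$ after discarding components not flat over $S$. So $\Lambda_Z$ appears in $\sp_s(\CC(\delta_\Theta))$. It remains to rule out that $\Lambda_Z$ is absorbed into $\CC(\delta_{\Theta_s})$ rather than into the extra summand $\Lambda(s)$: but by hypothesis $\CC(\delta_{\Theta_s})$ is irreducible, hence equals $\Lambda_{\Theta_s}$ with multiplicity one, so $\Lambda_Z\neq\Lambda_{\Theta_s}$ (it maps onto $Z\subsetneq\Theta_s$) forces $\Lambda_Z$ to be a component of $\Lambda(s)$. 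Therefore $\deg(\Lambda(s))\geq\deg(\Lambda_Z)>0$ and $\chi_\IC(\Theta_t)>\chi_\IC(\Theta_s)$.

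Finally, as in the proof of Corollary~\ref{cor:AM}, one varies $s$ over an open subset of $\scrN$ and $S$ over all curves meeting $\scrN$ transversely in $s$: the inequality just proved shows that a nonempty open subset of $\scrN$ is also an open subset of $\scrX_d$ for the constant value $d=\chi_\IC(\Theta_t)$ of the generic fiber, and since $\scrX_d$ is closed by Corollary~\ref{cor:IC-semicontinuous} and $\scrN\subseteq\scrN_c$ is closed, passing to closures shows $\scrN$ is an irreducible component of $\scrX_d$.

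The main obstacle is the bookkeeping around where $\Lambda_Z$ lands: Proposition~\ref{prop:component-in-sp} produces an extra component of the specialized \emph{conormal} cycle $\sp_s(\Lambda_{\Theta/S})$, but Theorem~\ref{thm:nearby-cycles} is phrased in terms of $\sp_s(\CC(\delta_\Theta))$ and the summand $\Lambda(s)=\CC(P(s))$ coming from the support-reduced perverse sheaf $P(s)$, so one must check (i) that the component $\Lambda_Z$ survives the passage through Remark~\ref{rem:bmm}, i.e.\ that it is flat over $S$ and hence not discarded --- which holds because $Z$ is an irreducible component of the fiber $\Sing(\Theta_s)$ contained in the special fiber, so the corresponding conormal in the relative picture is supported over $s$ and, being a genuine component of $\sp_s(\Lambda_{\Theta/S})$, does appear --- and (ii) that it is not already accounted for inside $\CC(\delta_{\Theta_s})$, which is exactly where the irreducibility hypothesis on $\CC(\delta_\Theta)$ is used. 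Once these two points are in place the argument is a routine repetition of Corollary~\ref{cor:AM}.
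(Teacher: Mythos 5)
Your proof is correct and follows exactly the route the paper intends: it repeats the argument of Corollary~\ref{cor:AM} with $\chi_\IC$ in place of $\deg(\gamma_\Theta)$, and uses the irreducibility of $\CC(\delta_{\Theta_s})$ precisely as in the remark following Corollary~\ref{cor:IC-semicontinuous} to force the extra component $\Lambda_Z$ from Proposition~\ref{prop:component-in-sp} into the summand $\Lambda(s)$ of Theorem~\ref{thm:nearby-cycles}. The paper's own proof is just a pointer to these two ingredients, and your write-up supplies the details faithfully.
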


\begin{proof} 
Use the same argument as in corollary~\ref{cor:AM}, together with the remark after the proof of corollary~\ref{cor:IC-semicontinuous}.
\end{proof}

This in particular applies to the locus of Jacobians. In the following corollary we do not mention hyperelliptic Jacobians because for them $\CC(\delta_\Theta)$ is reducible, and we haven't checked what happens for a generic Prym variety. However, we include the Andreotti-Mayer locus $\scrN_0 \subset \scrA_g$ of ppav's with a singular theta divisor:

\begin{cor} \label{cor:schottky-topology}
Inside the moduli space $\scrA_g$ we have:\smallskip
\begin{enumerate} 
\item The locus $\scrN_0$ is equal to $\scrX_d$ for $d=\begin{cases} g!-1 & \textnormal{\em if $g$ is odd},\\ g! - 2 & \textnormal{\em if $g$ is even}. \end{cases}$  \medskip 
\item The locus of Jacobians is a component of $\scrX_{d}$ for $d=\binom{2g-2}{g-1}$.
\end{enumerate} 
\end{cor}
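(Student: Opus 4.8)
The plan is to deduce both parts from the general machinery, using the degree of the intersection-cohomology characteristic cycle in place of the Gauss degree and running the same argument as in the proof of corollary~\ref{cor:AM}.

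\medskip

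For part~(1), first I would recall that $\chi_\IC(\Theta) = \deg(\CC(\delta_\Theta))$ by the Kashiwara index formula, and that $\CC(\delta_\Theta) = \Lambda_\Theta + \sum_{Z} m_Z \Lambda_Z$ with the sum over $Z\subseteq \Sing(\Theta)$ and $m_Z\geq 0$, each $\deg(\Lambda_Z)\geq 0$ by proposition~\ref{prop:degree-positive}. Hence $\chi_\IC(\Theta)\leq \deg(\Lambda_\Theta) = \deg(\gamma_\Theta)$, with equality iff $\Sing(\Theta)$ has no component of general type appearing in $\CC(\delta_\Theta)$; in particular for a smooth theta divisor $\chi_\IC(\Theta)=\deg(\gamma_\Theta)=g!$. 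The maximal value $g!$ is thus attained exactly on the smooth locus $\scrA_g\setminus \scrN_0$, so $\scrX_d$ for $d<g!$ is contained in $\scrN_0$; it remains to identify the generic value of $\chi_\IC(\Theta)$ on $\scrN_0$. For a generic ppav in $\scrN_0$ the theta divisor has a single ordinary double point (for $g$ even there are two, by the parity considerations that distinguish the two components of the locus of ppav's with a vanishing theta-null versus the generic point of $\scrN_0$; more precisely, for $g$ odd the generic point of $\scrN_0$ has one node, for $g$ even it has two), and an ordinary double point on a hypersurface of dimension $g-1\geq 1$ contributes a summand $\Lambda_{\{p\}}$ to $\CC(\delta_\Theta)$ of multiplicity one, with $\deg(\Lambda_{\{p\}})=1$. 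Thus at the generic point of $\scrN_0$ we get $\chi_\IC(\Theta) = g! - 1$ (resp.\ $g!-2$), and by corollary~\ref{cor:IC-semicontinuous} together with constructibility this value is the maximum of $\chi_\IC$ on the irreducible variety $\scrN_0$; since any ppav with a singular theta divisor has $\chi_\IC(\Theta)\leq g!-1$ (the presence of at least one singular point forces at least one extra summand, or a strictly smaller $\deg(\Lambda_\Theta)$), we conclude $\scrN_0 = \scrX_{g!-1}$ for $g$ odd and $\scrN_0=\scrX_{g!-2}$ for $g$ even.

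\medskip

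For part~(2), I would apply corollary~\ref{cor:IC-AM}: the Jacobian locus is an irreducible component of an Andreotti-Mayer locus $\scrN_{g-4}$ by~\cite{AM}, its generic point is a simple ppav by~\cite[Proposition 3.4]{Pirola} and so $\Sing(\Theta)$ has no negligible components, and for a generic Jacobian $\CC(\delta_\Theta)$ is irreducible --- this last point is the microlocal input one needs, and it holds because for a generic curve $C$ the Brill--Noether theory forces $W_{g-1-k}$ to be irreducible of the expected dimension and the perverse sheaf $\delta_\Theta$ is (a shift of) the decomposition-theorem building block with no extra summands, so its characteristic cycle is $\Lambda_\Theta$ with no further conormal components. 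Granting this, corollary~\ref{cor:IC-AM} shows the Jacobian locus is an irreducible component of $\scrX_d$ for some $d$, and the value of $d$ is read off at the generic Jacobian: there $\CC(\delta_\Theta)$ is irreducible, hence equals $\Lambda_\Theta$, so $\chi_\IC(\Theta) = \deg(\Lambda_\Theta) = \deg(\gamma_\Theta) = \binom{2g-2}{g-1}$ by the classical computation of the degree of the Gauss map of a Jacobian~\cite[proof of prop.~10]{Andreotti}. This gives $d=\binom{2g-2}{g-1}$.

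\medskip

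The main obstacle I expect is the irreducibility of $\CC(\delta_\Theta)$ for a generic Jacobian (and more generally pinning down exactly what $\CC(\delta_\Theta)$ is for the singular theta divisors that occur generically on $\scrN_0$): this is a genuinely microlocal statement, not merely a singularity count, and one must argue that $\Sing(\Theta)$, though nonempty of positive dimension for $g\geq 5$, does not contribute any conormal variety to the characteristic cycle of the intersection complex --- equivalently, that $\delta_\Theta$ has no sub- or quotient-perverse-sheaf supported on $\Sing(\Theta)$. For the node count in part~(1) one similarly needs that the local Euler obstruction / characteristic cycle of a hypersurface with a single ordinary double point is $\Lambda_\Theta + \Lambda_{\{p\}}$, which is standard but should be cited carefully; and one must invoke the known description of the generic point of $\scrN_0$ (one node for $g$ odd, the two-node picture for $g$ even via~\cite{DebarreCodimension3} or the classical theory of the theta-null divisor) to get the exact constant.
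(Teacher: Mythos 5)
Your part (2) is essentially the paper's argument: the one nontrivial input is the irreducibility of $\CC(\delta_\Theta)$ for a generic (nonhyperelliptic) Jacobian, which you correctly isolate; the paper simply cites Bressler--Brylinski for it and then runs the argument of corollary~\ref{cor:AM}. (Your heuristic that simplicity of $\delta_\Theta$ "with no extra summands" yields irreducibility of $\CC$ is not by itself a proof --- a simple perverse sheaf can have reducible characteristic cycle, e.g.\ the intersection complex of a cuspidal cubic --- but you flag this as the point needing a citation, which is fair.)

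Part (1), however, contains genuine errors, and the correct final constants emerge only because two mistakes cancel. First, $\scrN_0$ is \emph{not} irreducible: for $g\geq 4$ it has two components ($\theta_{\mathrm{null}}$ and $\scrN_0'$), whose generic points have one node and two nodes respectively, \emph{for every} $g$; the number of nodes at the generic point is governed by which component you are on, not by the parity of $g$. Your argument "this value is the maximum of $\chi_\IC$ on the irreducible variety $\scrN_0$" therefore does not make sense as stated; one must compute $\chi_\IC$ at the generic point of \emph{each} component and take the larger value, then invoke closedness of $\scrX_d$. Second, your local computation at a node is wrong in two ways: (i) the multiplicity of $\Lambda_{\{p\}}$ in $\CC(\delta_\Theta)$ at an ordinary double point is parity-dependent --- it is $1$ when $\dim\Theta=g-1$ is even (i.e.\ $g$ odd) and $0$ when $g$ is even --- not always $1$; and (ii) you never account for the drop of $\deg(\Lambda_\Theta)$ itself, which equals $g!-2k$ for $k$ nodes. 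The correct count is $\chi_\IC(\Theta)=(g!-2k)+k=g!-k$ for $g$ odd and $\chi_\IC(\Theta)=g!-2k$ for $g$ even; maximizing over $k\in\{1,2\}$ gives $g!-1$ resp.\ $g!-2$. Note also that adding an effective summand $\Lambda_{\{p\}}$ of degree $1$ \emph{increases} the degree, so your stated inequality $\chi_\IC(\Theta)\leq\deg(\Lambda_\Theta)$ is backwards (it should be $\geq$), and the parenthetical claim that a singular point "forces at least one extra summand, or a strictly smaller $\deg(\Lambda_\Theta)$" does not yield the bound $\chi_\IC\leq g!-2$ needed for $g$ even; the inclusion $\scrN_0\subseteq\scrX_d$ must instead come from the generic-point computation on both components combined with corollary~\ref{cor:IC-semicontinuous}.
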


\begin{proof} 
(1) By definition $\scrA_g\setminus \scrN_0$ consists of all ppav's $(A,\Theta)$ with a smooth theta divisor and for those we know that $\chi_\IC(\Theta)=g!$ because for a smooth variety intersection cohomology equals Betti cohomology. But at a generic point $(A, \Theta)$ of each of the two components of $\scrN_0$ the theta divisor has one respectively two nodes, and then
\[
 \chi_\IC(\delta_\Theta) \;=\;
 \begin{cases}
 g! - 2k & \textnormal{if $g$ is even}, \\
 g! - k & \textnormal{if $g$ is odd},
 \end{cases}
\] 
where $k\in \{1,2\}$ denotes the number of nodes~\cite[proof of prop.~4.2(2)]{KrWSchottky}. Hence the claim follows by corollary~\ref{cor:IC-semicontinuous}. Note that the degree of the classical Gauss map is $\deg(\Lambda_\Theta)=g! - 2k$ in both cases, but for odd $g$ the cycle $\CC(\delta_\Theta)=\Lambda_\Theta + \Lambda_{\Sing(\Theta)}$ is reducible and we cannot directly apply corollary~\ref{cor:IC-AM}.

\bigskip

(2) For Jacobians of nonhyperelliptic curves we know that $\CC(\delta_\Theta)=\Lambda_\Theta$ is irreducible by~\cite[th.~3.3.1]{BrB}, so if we specialize to such a Jacobian, then any new component of the specialization must enter in $\Lambda(s)$. So the same argument as in the proof of corollary~\ref{cor:Schottky} shows that the locus of Jacobians is a component of~$\scrX_d$ where~$d=\chi_\IC(\Theta)=\deg(\CC(\delta_\Theta))=\deg(\Lambda_\Theta)$ is the degree of the Gauss map for the theta divisor on a general Jacobian variety as in corollary~\ref{cor:Schottky}.\end{proof}

The above is still only a weak solution to the Schottky problem, though~$\chi_\IC(\Theta)$ also appears as the dimension of an irreducible representation of a certain reductive group which gives more information~\cite[sect.~4]{KraemerMicrolocalII}. The following example for $g=4$ illustrates the difference between the various numerical invariants:

\begin{ex} 
Let $(A, \Theta) \in \scrA_4$.\smallskip
\begin{itemize}[leftmargin=2.2em,labelindent=16pt] 
\item If $\Sing(\Theta)$ consists of $8$ nodes, then $\deg(\gamma_\Theta)=\chi_\IC(\delta_\Theta) = 8$.\smallskip 
\item If $\Sing(\Theta)$ consists of $5$ nodes, then $\deg(\gamma_\Theta)=\chi_\IC(\delta_\Theta) = 14$.\smallskip
\item If $(A, \Theta)$ is a hyperelliptic Jacobian, then $\deg(\gamma_\Theta)=8$ and $\chi_\IC(\delta_\Theta) = 14$.\smallskip
\end{itemize} 
So there are non-homeomorphic theta divisors whose Gauss maps have the same degree. Are there also homeomorphic theta divisors with different Gauss degrees?
\end{ex}

\bigskip 

%
%
%
%


\bibliographystyle{amsplain}
\bibliography{Bibliography}

\end{document}